\newcommand{\GI}[2][]{\sidenote[colback=yellow!20]{\textbf{GI\xspace #1:} #2}}
\newcommand{\tdis}{t_\mathrm{dis}}
\newcommand{\sol}{\mathcal{T}}
\DeclareMathOperator{\dist}{dist}
\DeclareMathOperator{\Leb}{Leb}
\DeclareMathOperator{\Unif}{Unif}
\DeclareMathOperator{\Law}{Law}
\newcommand{\vertiii}[1]{{\left\vert\kern-0.25ex\left\vert\kern-0.25ex\left\vert #1
    \right\vert\kern-0.25ex\right\vert\kern-0.25ex\right\vert}}
\renewcommand{\setminus}{-}
\begin{document}
% Title etc.
\author[Cooperman]{William Cooperman}
\address{%
  Department of Mathematics, ETH Z\"urich.
}
\email{bill@cprmn.org}
\author[Iyer]{Gautam Iyer}
\address{%
  Department of Mathematical Sciences, Carnegie Mellon University, Pittsburgh, PA 15213.
}
\email{gautam@math.cmu.edu}
\author[Rowan]{Keefer Rowan}
\address{%
  Courant Institute of Mathematical Sciences, New York University, New York, NY, 10012
}
\email{keefer.rowan@cims.nyu.edu}
\author[Son]{Seungjae Son}
\address{%
  Department of Mathematical Sciences, Carnegie Mellon University, Pittsburgh, PA 15213.
}
\email{seungjas@andrew.cmu.edu}
\thanks{This work has been partially supported by the National Science Foundation under grants
  DMS-2342349, % RTG
  DMS-2406853, % GI
  DMS-2303355, % WC
  %DMS-XX to XX,
  and the Center for Nonlinear Analysis.}
\subjclass{%
  Primary:
    60J25, % Probability -> Markov Processes -> Continuous-time Markov processes on general state spaces
  Secondary:
    35Q49, % PDE -> Transport equations
    %60G17, % Stochastic processes -> Sample path properties
    76R05. % Fluid Dynamics -> Diffusion and Convection -> Forced convection
  }
\keywords{enhanced dissipation, mixing}
\title{Exponentially mixing flows with slow enhanced dissipation}
\begin{abstract}
  Consider a passive scalar which is advected by an incompressible flow~$u$ and has small molecular diffusivity~$\kappa$.
  Previous results show that if~$u$ is exponentially mixing and $C^1$, then the \emph{dissipation time} is~$O(\abs{\log \kappa}^2)$.
  We produce a family of incompressible flows which are~$C^0$ and exponentially mixing, uniformly in~$\kappa$; however have a dissipation time of order~$1/\kappa$ (i.e.\ exhibits no enhanced dissipation).
  We also estimate the dissipation time of mixing flows, and obtain improved bounds in terms of the mixing rate with explicit constants, and allow for a time inhomogeneous mixing rate which is typical for random constructions of mixing flows.
\end{abstract}
\maketitle

\section{Introduction}

\subsection{Main result}
The aim of this paper is to produce an example of an incompressible flow which mixes exponentially but does not enhance dissipation.
We begin by stating our results.
Following this, we will survey the literature and place our work in the context of existing results.

Let $\kappa > 0$, $u = u^\kappa$ be a (possibly time and~$\kappa$ dependent) divergence-free vector field on the~$d$-dimensional torus, and let~$\theta^\kappa$ solve the advection diffusion equation
\begin{equation}\label{e:ad}
  \partial_t \theta^\kappa + u^\kappa \cdot \grad \theta^\kappa - \kappa \lap \theta^\kappa = 0
  \,.
\end{equation}
Our interest is in understanding the asymptotic behavior of the \emph{dissipation time} $\tdis = \tdis(u, \kappa)$ as~$\kappa \to 0$.
Recall the \emph{dissipation time}~\cite{FannjiangWoowski03,FannjiangNonnenmacherEA04} is a measure of the rate at which solutions to~\eqref{e:ad} converge to their equilibrium state and is defined to be the smallest time after which mean-zero solutions to~\eqref{e:ad} are guaranteed to dissipate half of their initial~$L^2$ energy.
Since the flows we consider are time-inhomogeneous,
\begin{equation}
  \tdis(u, \kappa) \defeq \sup_{s \geq 0} \tdis^s(u, \kappa)
  \,,
\end{equation}
where for every~$s \geq 0$, $\tdis^s(u, \kappa)$ is defined by
\begin{equation}
  \tdis^s(u^\kappa, \kappa)^s \defeq \inf \set[\Big]{
    t \geq 0 \st
    \text{for every } \theta^\kappa_{s} \in \dot L^2,
    \text{ we have }
    \norm{\theta^\kappa_{s + t}}_{L^2} \leq \frac{\norm{\theta^\kappa_s}_{L^2}}{2} 
  }
  \,.
\end{equation}
Here the space~$\dot L^2 \subseteq L^2(\T^d)$ is the space of mean-zero square integrable functions on the torus~$\T^d$.
Moreover, the function~$\theta^\kappa$ in the in the infimum above is a solution to~\eqref{e:ad} with initial data~$\theta^\kappa_s$, specified at time~$s$.

Using the Poincar\'e inequality,~$\dv u^\kappa = 0$, and that $\theta \in \dot L^2$ quickly shows that the dissipation time is always bounded above by
\begin{equation}\label{e:tdisPoincare}
  \tdis(u^\kappa, \kappa) \leq \frac{\log 2}{\lambda_1 \kappa}
  \,,
\end{equation}
where~$\lambda_1$ is the smallest nonzero eigenvalue of the negative Laplacian.
In many physical situations, however, it is observed that the dissipation time is much smaller than the upper bound in~\eqref{e:tdisPoincare}; see~\cite{CotiZelatiCrippaEA24} for a review of the relevant literature.
This phenomenon is called \emph{enhanced dissipation}, and occurs, for example, when~$u^\kappa$ is mixing.
In fact, if~$u^\kappa$ is~$C^1$ and \emph{exponentially mixing} uniformly in~$\kappa$ then~$\tdis \leq C \abs{\log \kappa}^2$ (see Corollary~\ref{cor:exponential mixing dissipation enhancement}, below).
The aim of this paper is to produce an example of flows that are~$C^0$ and exponentially mixing uniformly in~$\kappa$ but \emph{do not} exhibit enhanced dissipation -- that is they saturate the bound~\eqref{e:tdisPoincare}.
(The flows we construct are in fact smooth, but higher norms are not bounded uniformly in~$\kappa$.)
Explicitly, we prove the following theorem.

\begin{theorem}\label{t:ediss-slowR}
  For every sufficiently small~$\kappa > 0$, there exists an incompressible velocity field~$u^\kappa$ (which will be constructed explicitly in Section~\ref{s:emuk}, below) such that the following hold:
  \begin{enumerate}[(1)]\reqnomode
    \item \label{i:uniform-C1R}
      \emph{(Uniform~$L^\infty$ boundedness)}
      There exists constants~$C, F$ that are independent of~$\kappa$ such that for all $\kappa>0$
      \begin{equation}\label{e:uniformC1R}
	\norm{u^\kappa}_{L^\infty([0, \infty) \times \T^d)} \leq C
	\quad\text{and}\quad
	 \norm{u^\kappa}_{L^\infty([0, \infty); C^1(\T^d) )}
	  < \frac{F}{\kappa}
	\,,
      \end{equation}
    \item \label{i:exp-mixingR}
      \emph{(Exponential mixing)}
      There exists constants~$D, \gamma_1 > 0$ (independent of~$\kappa$) such that for all sufficiently small~$\kappa > 0$, every solution to the transport equation
      \begin{equation}\label{e:transport}
	\partial_t \phi + u^\kappa \cdot \grad \phi = 0
	\,,
      \end{equation}
      with initial data~$\phi_0 \in \dot H^1$ satisfies the mixing bound for all $n,m \in 2\N$
      \begin{equation}\label{e:ukappa-mixrateR}
	\norm{\phi_{m+n}}_{H^{-1}}
	  \leq D (m^2+1) \exp\paren[\big]{-\gamma_1 n} \norm{\phi_m}_{H^1}
	  \,.
      \end{equation}

    \item \label{i:slow-dissipationR}
      \emph{(No enhanced dissipation)}
      For all sufficiently small~$\kappa > 0$, and for all~$s \geq 0$, the dissipation time satisfies the lower bound
      \begin{equation}\label{e:tdis-lowerR}
	\tdis^s (u^\kappa, \kappa) \geq \frac{C}{\kappa}
	\,.
      \end{equation}
  \end{enumerate}
\end{theorem}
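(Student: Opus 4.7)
Properties \textup{(1)} and \textup{(2)} should follow by direct inspection of the explicit construction in Section~\ref{s:emuk}. For \textup{(1)} one reads $\norm{u^\kappa}_{L^\infty}$ and $\norm{u^\kappa}_{C^1}$ off the formula, with the $1/\kappa$ in the $C^1$ bound appearing because the construction uses length scales of order $\kappa$ to gain fast mixing while keeping the $L^\infty$ norm bounded. Property \textup{(2)} is most naturally proved by reducing the transport equation~\eqref{e:transport} to a prototypical exponentially mixing flow via rescaling and composition. The polynomial prefactor $D(m^2+1)$ in~\eqref{e:ukappa-mixrateR} plausibly arises from a short ``alignment'' phase during which an $H^1$ datum is transferred onto the scales on which the intrinsic mixing rate $\gamma_1$ is active, after which the exponential decay takes over uniformly.

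The central content is the lower bound \textup{(3)}. For every $s \geq 0$ it suffices to exhibit a unit-norm $\theta^\kappa_s \in \dot L^2$ whose evolution under~\eqref{e:ad} satisfies $\norm{\theta^\kappa_{s+t}}_{L^2} > 1/2$ for all $t \leq C/\kappa$. Using the standard energy identity
\begin{equation*}
  \norm{\theta^\kappa_{s+T}}_{L^2}^2
    = \norm{\theta^\kappa_s}_{L^2}^2 - 2\kappa \int_s^{s+T} \norm{\grad \theta^\kappa_t}_{L^2}^2 \, dt,
\end{equation*}
this reduces to producing a $\theta^\kappa_s$ for which the time-averaged Dirichlet energy is small: $\kappa \int_s^{s+C/\kappa} \norm{\grad \theta^\kappa_t}_{L^2}^2 \, dt \leq 3/8$. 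The natural candidate is a low Fourier mode, possibly adjusted to adapt to the state of $u^\kappa$ at time $s$, that is nearly ``transparent'' to the flow: although $\norm{u^\kappa}_{C^1} \sim 1/\kappa$, the mixing action of $u^\kappa$ is concentrated at scales of order $\kappa$ and can be arranged so that it acts almost trivially on a suitable low-frequency subspace. One then aims to show that for such a starting datum the advection term $u^\kappa \cdot \grad \theta^\kappa_t$ transfers only a controlled amount of energy to high frequencies on $[s,s+C/\kappa]$, so that $\norm{\grad \theta^\kappa_t}_{L^2}$ stays bounded by an absolute constant.

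The main obstacle is precisely this gradient estimate. One must reconcile the exponential mixing bound~\eqref{e:ukappa-mixrateR}---which for generic $H^1$ data forces $\norm{\grad \phi_t}_{L^2}$ to grow essentially exponentially under the pure transport equation~\eqref{e:transport}---with the requirement that, for the carefully chosen slow datum $\theta^\kappa_s$, $\norm{\grad \theta^\kappa_t}_{L^2}$ remains bounded under the full advection--diffusion evolution~\eqref{e:ad}. The construction in Section~\ref{s:emuk} must therefore be engineered so that at every time $s$ there is a (nearly) invariant low-frequency slow subspace, essentially orthogonal to the directions in which $u^\kappa$ produces exponential stretching. Granted such a structural decomposition, the proof of \textup{(3)} combines it with the regularizing effect of $\kappa\lap$ to damp any small leakage of the slow datum into the mixing channel, confirming that $\theta^\kappa$ dissipates on the Poincar\'e timescale $1/\kappa$ and saturating the trivial upper bound~\eqref{e:tdisPoincare}.
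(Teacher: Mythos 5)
Your plan for part (3) contains the genuine gap. You reduce the lower bound to showing that, for a well-chosen low-mode datum, $\norm{\grad \theta^\kappa_t}_{L^2}$ stays bounded by an absolute constant up to times of order $1/\kappa$, and you propose to get this from a ``nearly invariant low-frequency slow subspace, essentially orthogonal to the stretching directions'' built into the construction. No such structure exists, and it cannot: by part (2) the flow mixes \emph{every} mean-zero $H^1$ datum (including $\sin x_1$) exponentially fast with a $\kappa$-independent rate, so under pure transport there is no low-frequency subspace on which $u^\kappa$ acts almost trivially, and you give no mechanism for the claimed gradient bound under the full equation~\eqref{e:ad}. The paper's route avoids any gradient estimate on $\theta^\kappa$ altogether: the relevant smallness is not spectral but in $W^{-1,\infty}$. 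Because the shear profile has amplitude one but oscillates at scale $\kappa$, one can write $u^\kappa_t=\dv H^\kappa_t$ with $H^\kappa$ skew-symmetric and $\norm{H^\kappa}_{L^\infty}\leq C_0\kappa$. Taking $\theta^\kappa_s=\sin x_1$ and comparing with the heat flow $\varphi^\kappa$ from the same datum, the difference $w$ satisfies
\begin{equation*}
  \tfrac12\partial_t\norm{w}_{L^2}^2
  \leq -\kappa\norm{\grad w}_{L^2}^2+\norm{H^\kappa}_{L^\infty}\norm{\grad w}_{L^2}\norm{\grad\varphi^\kappa}_{L^2}
  \leq \tfrac12 C_0^2\kappa\norm{\grad\varphi^\kappa_0}_{L^2}^2,
\end{equation*}
where the $\norm{\grad w}$ factor is absorbed by the diffusion via Young's inequality and only the (trivially bounded) gradient of the \emph{heat} solution appears. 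Hence $\norm{w_t}_{L^2}^2\leq C_0^2\kappa t$, while $\norm{\varphi^\kappa_t}_{L^2}$ decays only on the Poincar\'e time scale, which yields $\tdis^s\geq C/\kappa$ (this is Proposition~\ref{p:tdis-lower}, a general statement for any velocity field that is $O(\kappa)$ in $W^{-1,\infty}$). So the true mechanism is exactly the opposite of what you posit: the flow does stretch the low mode, but it moves only $O(\kappa)$ of ``flux'' per unit time, which is what the comparison argument quantifies.

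Part (2) as you sketch it is also not an argument. Rescaling to a ``prototypical'' mixer is precisely what one must avoid: any such rescaling (as in the paper's discussion of $v^\kappa$) produces a mixing rate that degenerates with $\kappa$, whereas the whole point is a rate $\gamma_1$ independent of $\kappa$. The paper gets this by verifying Harris conditions for the two-point chain with an \emph{explicit}, $\kappa$-independent Lyapunov function $V(x,y)=\abs{x-y}_\infty^{-p}$ (the $\kappa$-dependence is pushed into the additive constant and the minorizing measure, where it is harmless), yielding almost sure exponential mixing with random prefactor $D_\kappa$ whose moments are bounded uniformly in $\kappa$; the $(m^2+1)$ prefactor in~\eqref{e:ukappa-mixrateR} then comes not from an ``alignment phase'' but from a Chebyshev/Borel--Cantelli argument over even starting times, using stationarity in law of the shifts. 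Item (1) is indeed read off the formula as you say.
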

\begin{remark}
    We note that our main mixing estimate~\eqref{e:ukappa-mixrateR} is somewhat non-standard in two distinct ways. The first is that the estimate is only given on times $s,t \in 2 \N$. For a uniformly Lipschitz velocity field, a statement of mixing on times $s,t \in 2\N$ is equivalent to a statement for times $s,t \in [0,\infty)$ using the standard estimates for the fluctuation of the $H^1$ and $H^{-1}$ norms of the solution to~\eqref{e:transport}, paying only an additional prefactor constant.

    In our setting we do not have uniform-in-$\kappa$ bounds on the Lipschitz norm of $u$, and so~\eqref{e:ukappa-mixrateR} does not immediately imply a similar estimate for all $s,t \in [0,\infty)$.
    However, using the construction of~$u^\kappa$ (specifically, using the shear structure) and the Lipschitz norm bound in~\eqref{e:uniformC1R} and the integer time mixing bound~\eqref{e:ukappa-mixrateR}, we can readily deduce the mixing bound
    \begin{equation}\label{e:ukappa-mixrate-st}
    \norm{\phi_{t+s}}_{H^{-1}}
	  \leq \frac{D (s^2+1)}{\kappa^2} \exp\paren[\big]{-\gamma_1 t} \norm{\phi_s}_{H^1}
	  \quad\text{ for all } s,t\in[0,\infty),
	  \,.
    \end{equation}
    We note that~\eqref{e:ukappa-mixrate-st} has a~$\kappa$-dependent pre-factor, it is unimportant in the study of enhanced dissipation as it only enters logarithmically (see Corollary~\ref{cor:exponential mixing dissipation enhancement}, below).

    The second way in which our main mixing estimate~\eqref{e:ukappa-mixrateR} differs from the usual statement of mixing is the lack of initial time homogeneity: the prefactor constant depends on the initial time $m$.
    This is however a feature of all currently known smooth mixing flows and a consequence of their random construction, and is discussed further in Section~\ref{s:mixing homogeneity}, below.
\end{remark}

The main idea behind the construction is to alternate randomly shifted horizontal and vertical shears, inspired by the construction of Pierrehumbert~\cite{Pierrehumbert94}.
In order to ensure that these flows \emph{do not} enhance dissipation (in the sense of~\eqref{e:tdis-lowerR}), we choose shear profiles with amplitude~$1$ that oscillate on scales of order~$\kappa$.
%We expect that any shear profile with period~$\kappa$ should give an example of an exponentially mixing flow.
%However, this is not easy to prove for reasons that will be explained shortly.
%To overcome this technical issue we use a shear profile that
%oscillate on scales of order~$\kappa$.
%but is not exactly~$\kappa$-periodic, and
%use amplitude~$1$ flows that oscillate on scales of order~$\kappa$.
Based on energy estimates we show that any such flow can not enhance dissipation in the sense of~\eqref{e:tdis-lowerR}.
%In order to ensure these flows are also exponentially mixing, we use a construction
%%inspired by Pierrehumbert~\cite{Pierrehumbert94}
%The precise construction is detailed in Section~\ref{s:construction}, below.

Proving the~$\kappa$-independent exponential mixing bound~\eqref{e:ukappa-mixrateR} is more delicate.
Existing methods verify Harris conditions for the two point chain~\cite{BedrossianBlumenthalEA21,BlumenthalCotiZelatiEA23}, and then use a Borel--Cantelli argument as in~\cite{DolgopyatKaloshinEA04} to prove almost sure exponential mixing.
The Harris conditions require a Lyapunov function, which is typically produced using a version of Furstenberg's criterion~\cite{Furstenberg63}.
This \emph{can not} be used in our situation because this approach will only yield a~$\kappa$-dependent Lyapunov function, and a~$\kappa$-dependent Lyapunov exponent, resulting in an exponential mixing estimate like~\eqref{e:ukappa-mixrateR} where the constant~$\gamma_1$ depends on~$\kappa$ in an uncontrolled manner.
This is not suitable as it will not address sharpness of dissipation time bounds (described in Section~\ref{s:sharpness}, below).
Moreover, if~$\gamma_1$ vanishes fast with~$\kappa$ the dissipation time lower bound~\eqref{e:tdis-lowerR} is not surprising, nullifying main point of Theorem~\ref{t:ediss-slowR}.

We will instead prove Theorem~\ref{t:ediss-slowR} by constructing a~$\kappa$-independent Lyapunov function explicitly (see Section~\ref{s:emuk}, below).
Following this, existing methods~\cite{DolgopyatKaloshinEA04} can be used to prove almost sure exponential mixing as in~\eqref{e:ukappa-mixrateR} with a~$\kappa$-independent (deterministic) rate constant~$\gamma_1$, and a~$\kappa$-dependent (random) prefactor~$D_\kappa$.
Using the fact that the distribution of~$u$ is stationary, and that moments of~$D_\kappa$ are bounded uniformly in~$\kappa$ we will show the exponential mixing bound~\eqref{e:ukappa-mixrateR}.

\subsection{Time inhomogeneous mixing}\label{s:mixing homogeneity}

Let us further discuss the lack of time homogeneity in our main mixing result~\eqref{e:ukappa-mixrateR}. We first introduce our definition of a mixing rate. Let~$h \colon [0, \infty)^2 \to (0, \infty)$ be a function such that for every~$s$, the function~$t \mapsto h(s, t)$ is decreasing and vanishes at infinity.
We say the velocity field~$u$ is \textit{mixing with rate~$h$} if for every~$s, t \geq 0$, and every $\phi_s \in \dot H^1$, the solution to~\eqref{e:transport} with~$u^\kappa = u$ and initial data~$\phi_s$ at time~$s$ satisfies the mixing bound
\begin{equation}\label{e:mixrate}
  \norm{\phi_{s + t}}_{H^{-1}} \leq h(s, t) \norm{\phi_s}_{H^1} \,.
\end{equation}
We say that the mixing is \emph{time homogeneous} if $h(s,t)$ is independent of $s$.
We say~$u$ is \emph{exponentially mixing} if the mixing rate is of the form~$h(s, t) = D(s) e^{-\gamma t}$ for some function~$D$ that only depends on the initial time~$s$.

We remark that in context of dynamical systems, the velocity field~$u$ is time-independent, in which case the mixing is time homogeneous ``for free''.
In the time dependent setting, the mixing rate generically has a nontrivial dependence on the initial time $s$. In fact, surveying some of the myriad mixing examples so far constructed---\cite{
  YaoZlatos17,
  AlbertiCrippaEA19,
  ElgindiZlatos19,
  BedrossianBlumenthalEA22,
  MyersHillSturmanEA22,
  BlumenthalCotiZelatiEA23,
  CotiZelatiNavarroFernandez24,
  ElgindiLissEA25
}---only~\cite{MyersHillSturmanEA22,ElgindiLissEA25}, built using deterministic tools and explicitly time periodic, have a mixing rate that is both time homogeneous, and exponentially decaying.
Interestingly, these examples are only Lipschitz regular, and consequently the following problem remains open.
\begin{problem}
  Construct $u \in L^\infty([0,\infty), C^{1,\alpha}(\T^d))$ for some $\alpha>0$, which has a time-homogeneous exponential mixing rate~$h(s, t) = D e^{-\gamma t}$ for some constants~$D < \infty$ and~$\gamma > 0$ that are independent of~$s$ and~$t$.
\end{problem}

We note that the random constructions of exponential mixers~\cite{BedrossianBlumenthalEA22,BlumenthalCotiZelatiEA23,CotiZelatiNavarroFernandez24} only state exponential mixing for $s=0$: that is they only state a mixing result for initial data specified at time~$s = 0$.
For initial data specified at an arbitrary time~$s > 0$, it is natural to expect that the mixing rate~$h(s, t)$ has some dependence on~$s$.
Indeed, on any time interval, there is a small probability that the mixing is highly ineffective, and so on large time intervals we must pick up some periods where the mixing is less effective.
%This is accounted for by allowing a large~$s$-dependent prefactor, while maintaining exponential decay in~$t$ with rate that is independent of~$s$.
\smallskip

However, as their random constructions are time stationary in law, one can use a Borel--Cantelli type argument to prove exponential mixing for all initial times $s\geq 0$, at the cost of an $s$-dependent prefactor.
We use this argument in Section~\ref{s:proof of theorem}, below, to derive our mixing estimate~\eqref{e:ukappa-mixrateR}.
The same argument gives a generalization of main result of~\cite{CoopermanIyerEA25}, which we now state.
\begin{proposition}\label{p:mixInhomog}
  Suppose the flow of~$u$ generates a random dynamical system that satisfies Assumptions~2.1--2.4 of~\cite{CoopermanIyerEA25}.
  Additionally, suppose the law of~$u$ is stationary in time.
  There exists a random $D \geq 1$, and deterministic~$\gamma > 0$ such that we have the almost sure mixing bound~\eqref{e:mixrate} with
  \begin{equation}\label{e:expdecay}
    h(s, t) = D (1 + s^2) e^{-\gamma t}
    \,.
  \end{equation}
  Moreover, for every~$q < \infty$, $\alpha > 0$, and all sufficiently small~$\kappa > 0$, there exists a random~$D_{q, \kappa} \geq 1$ and finite constants~$\bar D_q$ (independent of~$\kappa, \alpha$), $C_\alpha$ (independent of~$q, \kappa$) such that for every~$s, t \geq 0$, $\theta_s \in \dot L^1$ the solution to~\eqref{e:ad} with initial data~$\theta_s$ at time~$s$ satisfies
  \begin{equation}\label{e:edissInhomog}
  \norm{\theta^\kappa_{s + t}}_{L^\infty} \leq \frac{C_\alpha D_{q, \kappa} (1 + s^2) e^{-\gamma t}}{\kappa^{\frac{d}{2} + \alpha}}  \norm{\theta^\kappa_s}_{L^1}
  \,,
\end{equation}
\end{proposition}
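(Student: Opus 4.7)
The plan is to treat the $s = 0$ version of both claimed bounds as a black box furnished by~\cite{CoopermanIyerEA25}: under Assumptions~2.1--2.4 the results there yield exponential mixing with random prefactor~$D^{(0)}$ and the enhanced dissipation bound~\eqref{e:edissInhomog} with random prefactor~$D_{q,\kappa}^{(0)}$, both with a common deterministic rate~$\gamma > 0$, where $D_{q,\kappa}^{(0)}$ has $q$-th moment bounded by~$\bar D_q$ uniformly in~$\kappa$, and $D^{(0)}$ has finite moments of some order~$q > 1/2$. The only remaining task is to upgrade these to arbitrary initial time~$s \geq 0$ with at most~$(1 + s^2)$ growth in the prefactor. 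Here the stationarity of the law of~$u$ is essential: for each integer~$n \geq 0$, the shifted process~$(u_{n + \cdot})$ has the same law as~$u$, so applying the black box to the shifted process yields random prefactors~$D^{(n)}$, $D_{q,\kappa}^{(n)}$ equidistributed with~$D^{(0)}$, $D_{q,\kappa}^{(0)}$ respectively.

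A Borel--Cantelli argument then combines these integer-shifted bounds into a single almost sure bound. Markov's inequality gives
\begin{equation*}
  \mathbb{P}\paren[\big]{D^{(n)} > M (1 + n^2)}
    \leq \frac{\mathbb{E}[(D^{(0)})^q]}{M^q (1 + n^2)^q},
\end{equation*}
which is summable in $n \in \N$ as soon as $q > 1/2$. Choosing~$M$ sufficiently large and invoking Borel--Cantelli shows that almost surely~$D^{(n)} \leq M(1 + n^2)$ for all but finitely many~$n$; absorbing the finitely many exceptional indices into the constant produces a random $D \geq 1$ with $D^{(n)} \leq D(1 + n^2)$ for every~$n \in \N$. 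The identical scheme applied to~$D_{q,\kappa}^{(n)}$ yields~$D_{q,\kappa}$; since the Markov bound depends on~$D_{q,\kappa}^{(0)}$ only through its moment bound~$\bar D_q$, the resulting random constant inherits the uniformity in~$\kappa$.

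For a general real initial time~$s \geq 0$, write~$s = n + r$. For the mixing bound take $n = \lfloor s \rfloor$, so $r \in [0, 1)$, and apply the integer estimate at time~$n$ with elapsed time~$r + t$:
\begin{equation*}
  \norm{\phi_{s + t}}_{H^{-1}}
    = \norm{\phi_{n + (r+t)}}_{H^{-1}}
    \leq D (1 + n^2) e^{-\gamma(r + t)} \norm{\phi_n}_{H^1}.
\end{equation*}
The short-time Lipschitz control on~$u$ provided by Assumptions~2.1--2.4 gives $\norm{\phi_n}_{H^1} \lesssim \norm{\phi_s}_{H^1}$ with an absorbable constant, yielding $h(s, t) = D(1 + s^2) e^{-\gamma t}$ after enlarging~$D$. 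For the enhanced dissipation bound take instead $n = \lceil s \rceil \geq s$; the~$L^1$-contractivity of the advection-diffusion semigroup then gives $\norm{\theta_n}_{L^1} \leq \norm{\theta_s}_{L^1}$, and applying the integer bound at~$n$ with elapsed time $s + t - n \geq 0$ directly yields~\eqref{e:edissInhomog} for all~$t \geq \lceil s \rceil - s$. The residual short times $t \in [0, \lceil s \rceil - s) \subset [0, 1)$ are handled by the standard heat-kernel smoothing estimate, whose contribution is absorbable into~$C_\alpha$.

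The main obstacle is the passage between integer and real initial times in the mixing bound, which requires sufficient almost sure short-time Lipschitz control on~$u$ to compare~$H^1$ norms of transport solutions at nearby times; for the enhanced dissipation bound this difficulty is sidestepped by the contractive nature of the diffusive semigroup. The other essential input from the black-box result of~\cite{CoopermanIyerEA25} is a suitable moment bound on the prefactor~$D^{(0)}$ for the $s = 0$ mixing estimate, needed to run the Borel--Cantelli step; any polynomial moment suffices, which is consistent with the random constructions in the references cited in Section~\ref{s:mixing homogeneity}.
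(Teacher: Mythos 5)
Your proposal is correct and follows essentially the same strategy as the paper: take the $s=0$ results of~\cite{CoopermanIyerEA25} as input and use stationarity of the law of~$u$ together with a Borel--Cantelli-type argument over integer initial times to produce the $(1+s^2)$ prefactor, exactly as in Step~2 of the proof of Theorem~\ref{t:ediss-slowR}. The one place you genuinely diverge is the dissipation estimate~\eqref{e:edissInhomog}: the paper first upgrades the mixing rate of the \emph{diffusive} flow~\eqref{e:SDEX} to arbitrary initial times (same Borel--Cantelli argument, with the $\kappa$-uniform moment bounds of Lemma~3.3 of~\cite{CoopermanIyerEA25}) and only then reruns the proof of Theorem~1.1 of~\cite{CoopermanIyerEA25} on top of the inhomogeneous mixing bound, whereas you shift the finished $L^1\to L^\infty$ estimate directly and patch non-integer initial times with $L^1$-contractivity and short-time smoothing; both routes work, and yours is arguably lighter since it never reopens the parabolic-regularity machinery. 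Two small points to tighten: (i) the proposition implicitly requires $\E D_{q,\kappa}^q \leq \bar D_q$ uniformly in~$\kappa$ (cf.~\eqref{e:tdiss}), and bare Borel--Cantelli ("absorb the finitely many exceptional indices") does not by itself give a moment bound on the resulting random constant --- define instead $D_{q,\kappa} \defeq \sup_n D^{(n)}_{q,\kappa}/(1+n^2)$ and bound $\E D_{q,\kappa}^q \leq \bar D_q \sum_n (1+n^2)^{-q}$, which is exactly the role played by the computation $\E K < \infty$ in~\eqref{e:EK}; (ii) your passage from $n=\lfloor s\rfloor$ to $s$ in the mixing bound solves the transport equation backward on $[n,s]$, which is fine under the uniform Lipschitz hypothesis, but taking $n=\lceil s\rceil$ and using conservation of the $L^2$ norm for the residual times avoids even that.
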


Clearly Proposition~\ref{p:mixInhomog} implies the dissipation time bound
\begin{equation}\label{e:tdiss}
  \tdis^s \leq \ln \paren[\Big]{
    \frac{D_{q, \kappa} ( 1 + s^2) }{\kappa}
  }
  \quad\text{and}\quad
  \E D_{q, \kappa}^q \leq \bar D_q
  \,,
\end{equation}
but does not imply a bound on~$\tdis$.

\subsection{Enhanced dissipation}

Given a velocity field is mixing with time-homogeneous rate~$h(s,t) = h(t)$, existing results~\cite{Feng19,FengIyer19,CotiZelatiDelgadinoEA20} provide an upper bound for the dissipation time. However no such bound has been given in the setting of time-inhomogeneous mixing, which appears naturally in the random mixing constructions. A refinement of these results, with explicit constants, a better dependence on the mixing rate, and accounting for non-homogeneity in the initial time is given below.

The central refinement is given by the following proposition, which gives that on a shorter time scale ($\abs{\log \kappa}$ for exponentially mixing flows) we get \emph{some} dissipation ($\abs{\log \kappa}^{-1/2}$ much for exponentially mixing flows). It is then by iterating this result that we bound the dissipation time and deal with the time-inhomogeneity.

\begin{proposition}\label{prop:dissipation at mixing time}
    For all $\kappa>0, \theta_0 \in \dot L_2$, if $\theta^\kappa_t$ is the solution~\eqref{e:ad} with incompressible velocity field $u$ with mixing rate $h(s,t)$, then  
    \[\|\theta^\kappa_{\tau_\kappa}\|_{L^2} \leq \sqrt{1 - A_\kappa} \|\theta_0\|_{L^2},\]
    where
    \begin{align*}
    H(T) &\defeq \sup_{0 \leq s \leq T/3 \leq t \leq T} h(s,t)
    \\\tau_\kappa &\defeq \inf \big\{t \in [2,\infty) : t^{-2} H(t) \leq 2^{8}(\|\nabla u\|_{L^\infty_{t,x}}+1)\kappa\big\}, \\A_\kappa &\defeq \frac{1}{2^{15}(\|\nabla u\|_{L^\infty_{t,x}} + 1) \tau_\kappa}.
    \end{align*}
\end{proposition}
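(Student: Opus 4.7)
The plan is to argue by contradiction. Write $T = \tau_\kappa$ and $M = \|\theta_0\|_{L^2}$, and suppose $\|\theta^\kappa_T\|_{L^2}^2 > (1-A_\kappa)M^2$. The $L^2$ energy identity then gives the two bookkeeping estimates $\int_0^T \|\nabla\theta^\kappa_\sigma\|_{L^2}^2\,d\sigma < A_\kappa M^2/(2\kappa)$ and $\|\theta^\kappa_t\|_{L^2}^2 > M^2/2$ for all $t\in[0,T]$ (using that $A_\kappa$ is small, which follows from $\tau_\kappa \geq 2$). By Chebyshev on the subinterval $[0, T/3]$ I would extract a regular time $s \in [0, T/3]$ at which $\|\nabla\theta^\kappa_s\|_{L^2}^2 \leq 3A_\kappa M^2/(2\kappa T)$, which by Poincar\'e bounds $\|\theta^\kappa_s\|_{H^1}$ by the same quantity up to an absolute constant. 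Against this regular snapshot I would pit the pure-transport surrogate $\phi_\sigma$ defined by $\partial_\sigma\phi + u\cdot\nabla\phi = 0$ with $\phi_s = \theta^\kappa_s$. For any destination time $t^* \in [s + T/3, T]$ the elapsed time $t^* - s$ lies in $[T/3, T]$, so the mixing hypothesis gives $\|\phi_{t^*}\|_{H^{-1}} \leq H(T)\|\theta^\kappa_s\|_{H^1}$. Plugging in the defining inequality $H(T) \leq 2^8(\|\nabla u\|_{L^\infty_{t,x}} + 1)\kappa T^2$ of $\tau_\kappa$ together with the bound on $\|\theta^\kappa_s\|_{H^1}$ then produces, after tracking constants, $\|\phi_{t^*}\|_{H^{-1}}^2 \lesssim (\|\nabla u\|_{L^\infty_{t,x}} + 1)\kappa T^2 M^2$.

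The hard part will be to prove a comparable bound on the diffusion perturbation $w = \theta^\kappa - \phi$, namely $\|w_{t^*}\|_{H^{-1}}^2 \lesssim (\|\nabla u\|_{L^\infty_{t,x}} + 1)\kappa T^2 M^2$. Since $w_s = 0$ and $\partial_t w + u\cdot\nabla w - \kappa\Delta w = \kappa\Delta\phi$, I would attempt this by duality: for any test function $g$ with $\|g\|_{H^1} \leq 1$, solve the backward transport equation $\partial_\sigma\psi + u\cdot\nabla\psi = 0$ from $\psi_{t^*} = g$; since $u$ is divergence-free, direct differentiation of $\sigma \mapsto \langle w_\sigma, \psi_\sigma\rangle$ yields $\langle w_{t^*}, g \rangle = -\kappa\int_s^{t^*}\langle \nabla\theta^\kappa_\sigma, \nabla\psi_\sigma \rangle\,d\sigma$. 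Cauchy--Schwarz in $\sigma$ then pits the energy budget $\int_s^{t^*}\|\nabla\theta^\kappa\|_{L^2}^2\,d\sigma < A_\kappa M^2/(2\kappa)$ from the first paragraph against $\int_s^{t^*}\|\nabla\psi_\sigma\|_{L^2}^2\,d\sigma$. The crux is to obtain the latter with only polynomial dependence on $(t^*-s)\|\nabla u\|_{L^\infty_{t,x}}$---rather than the naive exponential $e^{2(t^*-s)\|\nabla u\|_{L^\infty_{t,x}}}$ coming from a direct Gronwall estimate on the gradient equation for backward transport---and this is precisely why the defining inequality of $\tau_\kappa$ is written in terms of the polynomial balance $t^{-2}H(t) \sim (\|\nabla u\|_{L^\infty_{t,x}} + 1)\kappa$ rather than a mixing-time condition alone.

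Granting the perturbation bound, the triangle inequality delivers $\|\theta^\kappa_{t^*}\|_{H^{-1}}^2 \lesssim (\|\nabla u\|_{L^\infty_{t,x}} + 1)\kappa T^2 M^2$ uniformly in $t^* \in [s + T/3, T]$. Combined with the lower bound $\|\theta^\kappa_{t^*}\|_{L^2} \geq M/\sqrt{2}$ from the first paragraph and the interpolation $\|\theta^\kappa_{t^*}\|_{L^2}^2 \leq \|\theta^\kappa_{t^*}\|_{H^{-1}}\|\theta^\kappa_{t^*}\|_{H^1}$, Poincar\'e then yields the pointwise-in-time enstrophy lower bound $\|\nabla\theta^\kappa_{t^*}\|_{L^2}^2 \gtrsim M^2/\bigl((\|\nabla u\|_{L^\infty_{t,x}} + 1)\kappa T^2\bigr)$. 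Integrating this over $t^*$ in the interval $[s + T/3, T]$ of length at least $T/3$ produces $\int_0^T \|\nabla\theta^\kappa\|_{L^2}^2\,d\sigma \gtrsim M^2/\bigl((\|\nabla u\|_{L^\infty_{t,x}} + 1)\kappa T\bigr)$, which strictly exceeds the upper bound $A_\kappa M^2/(2\kappa) = M^2/\bigl(2^{16}(\|\nabla u\|_{L^\infty_{t,x}} + 1)\kappa T\bigr)$ from the first paragraph; this is where the large constants $2^8$ and $2^{15}$ in the definitions of $\tau_\kappa$ and $A_\kappa$ are chosen so that the contradiction closes with room to spare.
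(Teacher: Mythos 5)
Your skeleton (contradiction, energy budget $2\kappa\int_0^{\tau_\kappa}\|\nabla\theta^\kappa\|_{L^2}^2\,ds< A_\kappa$, a regular time $s\in[0,\tau_\kappa/3]$ by Chebyshev, a transport surrogate started there, the mixing hypothesis combined with the defining inequality of $\tau_\kappa$, and a final enstrophy lower bound on the last third contradicting the energy budget) matches the paper's strategy. But the step you yourself flag as the crux is a genuine gap, and the mechanism you propose for it cannot work. You need $\|w_{t^*}\|_{H^{-1}}$ small \emph{uniformly} for $t^*$ in the last third, and your duality identity $\langle w_{t^*},g\rangle=-\kappa\int_s^{t^*}\langle\nabla\theta^\kappa_\sigma,\nabla\psi_\sigma\rangle\,d\sigma$ requires $\int_s^{t^*}\|\nabla\psi_\sigma\|_{L^2}^2\,d\sigma$ to be polynomial in $(t^*-s)\|\nabla u\|_{L^\infty_{t,x}}$. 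The only hypotheses available are $u\in L^\infty_t W^{1,\infty}_x$ and the mixing rate $h$; from these the only bound on the backward-transported test function is the Gronwall bound $\|\nabla\psi_\sigma\|_{L^2}\le e^{(t^*-\sigma)\|\nabla u\|_{L^\infty_{t,x}}}\|g\|_{H^1}$, and for Lipschitz flows with exponential stretching --- i.e.\ precisely the mixing flows this proposition is aimed at --- that exponential growth is genuinely attained; mixing forces gradient growth rather than preventing it. Nothing in the definition of $\tau_\kappa$ encodes such a polynomial bound: the $t^{-2}$ there arises (in the paper's argument) from the size $\sqrt{A_\kappa/(\kappa\tau_\kappa)}$ of the $H^1$ norm at the regular time together with a H\"older-in-time loss of $\sqrt{\tau_\kappa}$, not from any control on gradients of transport solutions. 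So the claimed perturbation bound $\|w_{t^*}\|_{H^{-1}}^2\lesssim(\|\nabla u\|_{L^\infty_{t,x}}+1)\kappa\tau_\kappa^2$ is unsupported, and with it the uniform $H^{-1}$ bound on $\theta^\kappa_{t^*}$ and the ensuing pointwise enstrophy lower bound collapse.

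The paper's way around this is the closeness estimate~\eqref{eq:closeness estimate}: an $L^2$ (not $H^{-1}$) comparison $\|\theta^\kappa_t-\theta^0_t\|_{L^2}\le e^3\sqrt{\kappa}\,(\|\nabla\theta\|_{L^2}+\sqrt{\|\nabla u\|_{L^\infty_{t,x}}+1}\int_0^t\|\nabla\theta^\kappa_\sigma\|_{L^2}\,d\sigma)$, whose right-hand side involves the gradient of the \emph{diffusive} solution in $L^1_t$; the exponential Gronwall factor is tamed by telescoping over roughly $t(\|\nabla u\|_{L^\infty_{t,x}}+1)$ sub-intervals, on each of which the growth factor is $e^{O(1)}$ and the ``initial'' gradient is $\|\nabla\theta^\kappa_{tj/n}\|_{L^2}$. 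One then dichotomizes: if the transport surrogate stays within $1/4$ in $L^2$ on $[2\tau_\kappa/3,\tau_\kappa]$, the small $H^{-1}$ norm of the surrogate plus Fourier splitting shows most of the $L^2$ mass of $\theta^\kappa_t$ sits at frequencies above $\sim B_\kappa$, giving the enstrophy lower bound without ever bounding $\theta^\kappa_t$ in $H^{-1}$; if it separates, \eqref{eq:closeness estimate} gives a lower bound on $\int\|\nabla\theta^\kappa\|_{L^2}\,d\sigma$, and H\"older in time turns this into a contradiction with the energy identity (this H\"older step is the source of the $t^{-2}$, and ultimately of the $\abs{\log\kappa}^2$ rather than $\abs{\log\kappa}$). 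To repair your write-up you would need to replace the duality/backward-transport step by such an $L^2$ closeness estimate referred to $\int\|\nabla\theta^\kappa\|_{L^2}$, and replace the uniform $H^{-1}$ bound on $\theta^\kappa_{t^*}$ by the Fourier-splitting argument in the ``close'' case of the dichotomy.
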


A careful iteration of the above gives the following after some computation. 

\begin{corollary}
\label{cor:exponential mixing dissipation enhancement}
    Suppose $u \in L^\infty([0,\infty); W^{1,\infty}(\T^d))$ is an incompressible velocity field with mixing rate $h(s,t) \leq D (1+s^p) e^{-\gamma t}$ for some $p \in [0,\infty)$. Then for all $\kappa >0$,
    \begin{align}
      \tdis^0(u,\kappa) &\leq 2^{24}(\|\nabla u\|_{L^\infty_{t,x}} + 1) \cdot
      \\
    \label{e:tdis-expmix}
    &\qquad \cdot \bigg(1 +  2^{24} \Big(\frac{p}{\gamma}\Big)^4(\|\nabla u\|_{L^\infty_{t,x}} + 1) + \frac{(\log D)^2 + \abs{\log \kappa}^2}{\gamma^2}  \bigg).
    \end{align}
\end{corollary}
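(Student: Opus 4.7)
The plan is to iterate Proposition~\ref{prop:dissipation at mixing time}, restarting at each moment the $L^2$ energy has been reduced, and to carefully track how the time-inhomogeneity of~$h$ forces the one-step parameters to grow. For a starting time~$s\geq 0$, I apply the proposition to the time-shifted velocity field $u^{(s)}(t,x)\defeq u(s+t,x)$, whose mixing rate is $h^{(s)}(s',t) = h(s+s',t) \leq D(1+(s+s')^p)e^{-\gamma t}$. Because this bound is increasing in~$s'$ and decreasing in~$t$, the supremum defining $H^{(s)}(T)$ in the proposition is attained at the corner $(s',t)=(T/3,T/3)$, giving $H^{(s)}(T)\leq D(1+(s+T/3)^p)e^{-\gamma T/3}$. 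Taking logarithms in the defining inequality for~$\tau^{(s)}_\kappa$ then produces, for an explicit absolute constant~$C_0$,
\[
  \tau^{(s)}_\kappa \leq \frac{C_0}{\gamma}\Bigl(\abs{\log\kappa}+\log D + p\log(s+1)+\log(\|\nabla u\|_{L^\infty_{t,x}}+1)\Bigr).
\]

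With this in hand, I set up the iteration $s_0\defeq 0$, $s_{k+1}\defeq s_k+\tau^{(s_k)}_\kappa$. Applying the proposition to each~$u^{(s_k)}$ yields $\|\theta^\kappa_{s_{k+1}}\|_{L^2}\leq \sqrt{1-A_k}\,\|\theta^\kappa_{s_k}\|_{L^2}$ with $A_k=\bigl(2^{15}(\|\nabla u\|_{L^\infty_{t,x}}+1)\tau^{(s_k)}_\kappa\bigr)^{-1}$. To control the number~$N$ of iterations needed to halve the $L^2$ norm, I bootstrap by assuming $M\defeq \tdis^0(u,\kappa) \leq \bar M$ for some~$\bar M$ to be determined. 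Then every $s_k \leq \bar M$, so $\tau^{(s_k)}_\kappa\leq \tau_*\defeq (C_0/\gamma)\bigl(\abs{\log\kappa}+\log D + p\log(\bar M+1)+\log(\|\nabla u\|_{L^\infty_{t,x}}+1)\bigr)$ and $A_k\geq A_*\defeq \bigl(2^{15}(\|\nabla u\|_{L^\infty_{t,x}}+1)\tau_*\bigr)^{-1}$. Since $(1-A_*)^{N/2}\leq e^{-NA_*/2}$, taking $N\geq 2\log 2/A_*$ iterations gives the desired halving, and the total elapsed time satisfies $M\leq N\tau_* \leq 2^{16}\log 2\cdot(\|\nabla u\|_{L^\infty_{t,x}}+1)\tau_*^2$.

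The final step is closing this self-consistent inequality, which is the main bookkeeping obstacle. Expanding $\tau_*^2$ as a sum of squares produces one term proportional to $(\|\nabla u\|_{L^\infty_{t,x}}+1)(p/\gamma)^2\log^2(\bar M+1)$. Because $\log^2(x+1)\ll x$ as $x\to\infty$, the elementary fact that ``$x\leq C_1+C_2\log^2(x+1)$ implies $x\leq 2C_1+O(C_2^2)$'' allows me to absorb the self-referential term at the cost of adding $(\|\nabla u\|_{L^\infty_{t,x}}+1)^2(p/\gamma)^4$, precisely the $(p/\gamma)^4$ factor appearing in~\eqref{e:tdis-expmix}. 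The remaining $\abs{\log\kappa}^2$ and $(\log D)^2$ contributions (along with the $\log^2(\|\nabla u\|_{L^\infty_{t,x}}+1)$ term, absorbed into the leading $(\|\nabla u\|_{L^\infty_{t,x}}+1)$ prefactor) give the other summands in the stated bound. The only nontrivial remaining task is careful accounting of the numerical constants to obtain the $2^{24}$ prefactors.
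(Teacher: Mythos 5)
Your proposal is correct and follows essentially the same route as the paper: iterate Proposition~\ref{prop:dissipation at mixing time} from shifted starting times, bound each step length by roughly $\gamma^{-1}\bigl(\abs{\log\kappa}+\log D+p\log(\text{elapsed time})\bigr)$, and close the resulting self-referential inequality, absorbing the squared logarithm into the $(p/\gamma)^4$ term. The paper only organizes the bookkeeping differently (fixing the candidate time $T_\kappa$ in advance and verifying $2^{17}(\|\nabla u\|_{L^\infty_{t,x}}+1)\sigma_\kappa^2\le T_\kappa$ instead of bootstrapping with $\bar M$); the one imprecision in your write-up is that the displayed per-step bound on $\tau^{(s)}_\kappa$ should also include the contribution of $t/3$ inside $(s+t/3)^p$, i.e.\ an extra term of size about $\frac{p}{\gamma}\log\bigl(1+\frac{p}{\gamma}\bigr)+1$, which is harmless since it is exactly what your final absorption step (and the paper's $p\log(4T_\kappa)$ term) accounts for.
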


The proof of these results will be given in Section~\ref{s:tdiss-bound}.

\subsection{Sharpness of dissipation time bounds.}\label{s:sharpness}

As a consequence of Theorem~\ref{t:ediss-slowR}, we can address (though not completely resolve) a gap between the heuristically expected bounds on the dissipation time and the rigorously provable bounds given by Corollary~\ref{cor:exponential mixing dissipation enhancement}. Of the exponential mixing examples, the dissipation times for~\cite{
  BedrossianBlumenthalEA21,
  CoopermanIyerEA25,
  ElgindiLissEA25
} are known.
Interestingly, in each case the dissipation time is~$\tdis^0 \approx \abs{\log \kappa}$, which is a \emph{stronger bound} than the bound given by Corollary~\ref{cor:exponential mixing dissipation enhancement}: $\tdis^0 \leq C \abs{\log \kappa}^2$.
Moreover, a compelling heuristic argument (which we explain in Section~\ref{s:review}, below) suggests that flows which are sufficiently regular (uniformly in~$\kappa$) and are mixing with rate $h(s,t) = D e^{-\gamma t}$ should in fact generically satisfy the stronger dissipation time bound
\begin{equation}\label{e:tdis-upper-heuristic}
  \tdis(u^\kappa, \kappa)
    \leq
      \frac{1}{\gamma}
	  \abs[\bigg]{\log\paren[\bigg]{\frac{C_\delta(u) D}{\kappa^{\frac{d}{2} + \delta}}}}
  \,,
\end{equation}
for any~$\delta > 0$.

Theorem~\ref{t:ediss-slowR} can be used to show that the gap between the provable bound of Corollary~\ref{cor:exponential mixing dissipation enhancement} and the expected bound~\eqref{e:tdis-upper-heuristic} may not be closable.
To explain this, let~$\set{u^\kappa}$ be the family of flows from Theorem~\ref{t:ediss-slowR}.
Rescale time by a factor of~$\kappa$, and set
\begin{equation}\label{eq:vkdef}
  v^\kappa_t(x) = \epsilon u^{\epsilon}_{\epsilon t}( x )
  \quad\text{where }
  \epsilon = \sqrt{\kappa}
  \,.
\end{equation}
Clearly, the velocity field~$v^\kappa$ satisfies the~$\kappa$-uniform $C^1$ bound
\begin{equation}\label{eq:vkbound}
  \norm{v^\kappa}_{L^\infty([0, \infty) \times \T^d)} \leq C \sqrt{\kappa}
  \quad\text{and}\quad
  \sup_{\kappa > 0} \norm{v^\kappa}_{L^\infty([0, \infty); C^1(\T^d) )}
    < F
  \,,
\end{equation}
for some constants~$C, F$ independent of~$\kappa$.
The velocity field~$v^\kappa$ is still exponentially mixing; however, due to the time rescaling the mixing rate is now~$\kappa$ dependent.
That is, every solution to the transport equation~\eqref{e:transport} (with drift~$v^\kappa$, instead of~$u^\kappa$) satisfies the mixing bound
\begin{equation}\label{e:ukappa-mixrate}
  \norm{\phi_t}_{H^{-1}(\T^d)}
    \leq D \exp\paren[\big]{-\gamma_\kappa t} \norm{\phi_0}_{H^1}
    \quad\text{where } \gamma_\kappa \defeq \gamma_1 \sqrt{\kappa}
    \,.
\end{equation}
% For this annealed mixing bound, Proposition~\ref{p:ediss-upper} and a Borel Cantelli argument quickly show that with positive probability we have
% \begin{equation}\label{e:tdis-upperAnnealed}
%   \tdis(v^\kappa, \kappa) \leq \frac{C \norm{v^\kappa}_{L^\infty([0, \infty); C^1)} }{\gamma_\kappa^2}
%     \abs[\Big]{\log\paren[\Big]{\frac{\bar D}{\kappa^2}}}^2
% \end{equation}

Finally, we note that the time rescaling gives
\begin{equation}\label{e:tdisV}
  \tdis( v^\kappa, \kappa )
    = \frac{1}{\epsilon} \tdis( u^\epsilon, \epsilon )
    \overset{\eqref{e:tdis-lowerR}}{\geq}
      \frac{C}{\epsilon^2}
    = \frac{C}{\kappa}
    \,.
\end{equation}
Using~\eqref{e:ukappa-mixrate} we see that the right hand side of~\eqref{e:tdis-upper-heuristic} is only of order~$1/\sqrt{\kappa}$ (up to a logarithmic factor), which is an order of magnitude smaller than the right hand side of~\eqref{e:tdisV}.
Thus,
%the heuristic upper bound~\eqref{e:tdis-upper-heuristic} can not hold.
this gives a family of exponentially mixing flows~$v^\kappa$ which are~$C^1$ bounded (uniformly in~$\kappa$) for which the stronger bound~\eqref{e:tdis-upper-heuristic} can not hold.
%Moreover, for this family the, the upper bound~\eqref{e:tdis-upper-expmix} and the lower bound~\eqref{e:tdisV} match (up to a logarithmic factor).

We note that the above isn't a proof that Corollary~\ref{cor:exponential mixing dissipation enhancement} is sharp as our example is a~$\kappa$-dependent family.
Even though these flows are~$C^1$ bounded uniformly in~$\kappa$, our construction does not allow us to find one, $\kappa$-independent, velocity field for which we have a matching lower bound to Corollary~\ref{cor:exponential mixing dissipation enhancement}.

\subsection{Literature review}\label{s:review}
We now survey the literature and place our results in the context of existing results.
\emph{Enhanced dissipation} is a phenomenon that can be observed in every day life:
pour some cream in your coffee.
If left alone, it will take hours to mix.
Stir it a little and it mixes right away.
This effect arises due to the interaction between the advection (stirring) and diffusion, and it plays an important role in many applications concerning hydrodynamic stability and turbulence and occurs on scales ranging from microfluidics to the meteorological and cosmological~\cite{
  LinThiffeaultEA11,
  Thiffeault12,
  Aref84,
  StoneStroockEA04
}.

To describe this mathematically, let $u$ be the velocity field of the ambient incompressible fluid and~$\rho^\kappa$ denote the concentration of a passively advected solute with molecular diffusivity $\kappa > 0$.
The evolution of~$\rho^\kappa$ is governed by the advection diffusion equation
\begin{equation}\label{e:ad1}
  \partial_t \rho^\kappa + u \cdot \grad \rho^\kappa - \kappa \lap \rho^\kappa = 0
  \,.
\end{equation}
For simplicity, in this paper we only consider~\eqref{e:ad1} with periodic boundary conditions on the $d$-dimensional torus $\T^d$.
%We will also always assume $u$ is spatially Lipschitz, uniformly in time.

If the ambient fluid is incompressible, the velocity field~$u$ is divergence-free (i.e.\ $\dv u = 0$).
In this case, an elementary energy estimate and the Poincar\'e inequality shows that the $L^2$ distance of the concentration from the equilibrium distribution decreases at most exponentially with a rate proportional to~$\kappa$, and yields the dissipation time bound~\eqref{e:tdisPoincare}.

This, however, is a very crude upper bound and completely neglects the effect of the advection.
In practice, the advection term typically causes filamentation and moves mass towards small scales.
Since the diffusion term $\kappa \lap \rho^\kappa$ damps small scales faster, the combined effect of advection and diffusion should lead to faster convergence of~$\rho^\kappa_t$ to its equilibrium state as~$t \to \infty$.
This phenomenon is known as \emph{enhanced dissipation} and has been observed in many situations.

One indication of enhanced dissipation is when~$\tdis(u, \kappa) = o(1/\kappa)$, and has been studied by several authors.
Seminal work of Constantin et\ al.~\cite{ConstantinKiselevEA08} (see also~\cite{Zlatos10,KiselevShterenbergEA08}) shows that if $u$ is time independent, then~$\kappa \tdis(\kappa) \to 0$ if and only if $u \cdot \grad$ has no eigenfunctions in $H^1$.
For shear flows, the classical work of Kelvin~\cite{Kelvin87} shows~$\tdis(\kappa) = \kappa^{-\alpha}$ for some $\alpha < 1$.
There are now several results studying enhanced dissipation in more generality and for nonlinear equations (see~\cite{
  FannjiangNonnenmacherEA04,
  BedrossianCotiZelati17,
  Wei19,
  CotiZelatiDrivas21,
  AlbrittonBeekieEA22,
  FengShiEA22,
  FengMazzucatoEA23,
  CotiZelatiGallay23,
  CobleHe23,
  FengHuEA23,
  Seis23,
  IyerZhou23,
  TaoZworski25
  }).
\GI[2025-06-18]{TODO: Cite a few more recent ones.}

Of particular interest is the connection between \emph{mixing} and enhanced dissipation, which has been studied and quantified in~\cite{
  Feng19,
  FengIyer19,
  FengFengEA20,
  CotiZelatiDelgadinoEA20,
  CotiZelatiCrippaEA24
}.
In particular, it is known that for exponentially mixing flows, we have~$\tdis(\kappa) \leq C \abs{\log \kappa}^2$, as stated in Corollary~\ref{cor:exponential mixing dissipation enhancement}.
The main result of this paper (Theorem~\ref{t:ediss-slowR}) produces a family of~$\kappa$-dependent incompressible vector fields~$u^\kappa$ which are~$C^0$ bounded and exponentially mixing uniformly in~$\kappa$, however are not dissipation enhancing (i.e.\ saturate~\eqref{e:tdisPoincare}).

Another point of interest is optimality of the dissipation time bound for mixing flows (Corollary~\ref{cor:exponential mixing dissipation enhancement}).
In particular, if~$u$ is~$C^1$ and exponentially mixing, it is not known if the~$O(\abs{\log \kappa}^2)$ bound given by Corollary~\ref{cor:exponential mixing dissipation enhancement} is sharp.
Indeed, to the best of our knowledge, for all known examples of exponentially mixing flows where the dissipation time can be computed, it is bounded by~$O(\abs{\log \kappa})$.
These examples include
  toral automorphisms~\cite{FannjiangNonnenmacherEA04,FengIyer19},
  stochastically forced Navier--Stokes equations~\cite{BedrossianBlumenthalEA21},
  expanding Bernoulli maps~\cite{IyerLuEA24},
  alternating shear flows~\cite{ElgindiLissEA25},
  contact Anosov flows~\cite{TaoZworski25},
  Pierrehumbert flows~\cite{CoopermanIyerEA25}.
The last reference~\cite{CoopermanIyerEA25} in fact shows that if~$u$ is generated by a random dynamical system that satisfies certain Harris conditions, then the dissipation time is bounded by~$O(\abs{\log \kappa})$.
These Harris conditions also imply almost sure exponential mixing, and so~\cite{CoopermanIyerEA25} provides a large class of flows (including Pierrehumbert flows) which are exponentially mixing and enjoy an~$O(\abs{\log \kappa})$ dissipation time bound.

A heuristic explanation for the~$O(\abs{\log \kappa})$ dissipation time bound is as follows.
The SDE associated with~\eqref{e:ad} is
\begin{equation}\label{e:SDEX}
  d X^\kappa_t = u_t(X^\kappa_t) \, dt + \sqrt{2 \kappa} \, dW_t
  \,,
\end{equation}
where~$W$ is a standard~$d$-dimensional Brownian motion.
Note that when~$\kappa = 0$, the process $X^0$ is simply the deterministic flow of the vector field~$u$.
Thus, if~$u$ is exponentially mixing with rate~$h_t = D e^{-\gamma t}$, then the mix norm decay~\eqref{e:mixrate} is equivalent to
\begin{equation}\label{e:X0mixing}
  \norm{\phi \circ X^0_t}_{H^{-1}}
    \leq D e^{-\gamma t} \norm{\phi}_{H^1}
  \quad
  \text{for all test functions }
  \phi \in \dot H^1(\T^d)
  \,.
\end{equation}

Since the noise is stationary, ergodic and homogeneous, it is reasonable to expect that the mixing estimate~\eqref{e:X0mixing} persists for small~$\kappa > 0$.
That is, for all~$\kappa > 0$ we still expect to have the \emph{almost sure} mixing bound
\begin{equation}\label{e:XkappaMixing}
  \norm{\phi \circ X^\kappa_t}_{H^{-1}}
    \leq D_1 e^{-\gamma_1 t} \norm{\phi}_{H^1}
  \quad
  \text{for all test functions }
  \phi \in \dot H^1(\T^d)
  \,.
\end{equation}
and some~$\kappa$-independent constants~$D_1$, $\gamma_1$.
Proving this rigorously is open (see Question~1.9 in~\cite{CoopermanIyerEA25}).
However, if~$u$ satisfies either the stochastic Navier--Stokes equations, or certain Harris conditions, it is possible to show~\cite{BedrossianBlumenthalEA21,CoopermanIyerEA25}  both exponential mixing as in~\eqref{e:X0mixing} and uniform in~$\kappa$ exponential mixing as in~\eqref{e:XkappaMixing} for all small~$\kappa$.

Now if the uniform mixing estimate~\eqref{e:XkappaMixing} holds, then the Kolmogorov backward equation, parabolic regularity, and the fact that~$\norm{\E \phi \circ X^\kappa_t}_{H^{-1}} \leq \E \norm{ \phi \circ X^\kappa_t }_{H^{-1}}$ can be used to obtain a fast decay estimate for solutions to~\eqref{e:ad1}.
Indeed following the proof of Theorem~1.1 in~\cite{CoopermanIyerEA25} shows that for every~$\alpha > 0$, there exists a constant~$C_\alpha = C_\alpha(u) > 0$ such that every solution of~\eqref{e:ad1} satisfies
\begin{equation}\label{e:rho-L1-Linf}
  \norm{\rho_t - \bar \rho}_{L^\infty} \leq \frac{C_\alpha D_1}{\kappa^{\frac{d}{2} + \alpha}}
    e^{- \gamma_1 t}
    \norm{\rho_0 - \bar \rho }_{L^1} \,,
  \quad\text{where}\quad
  \bar \rho = \int_{\T^d} \rho_0 \, dx
  \,.
\end{equation}
This immediately implies the dissipation time bound~\eqref{e:tdis-upper-heuristic}.
\smallskip

The long time decay of solutions to~\eqref{e:ad1} provides another interesting avenue for study.
In turbulence theory~\cite{Batchelor59} identified a length scale at which the effects of advection and diffusion balance.
At this scale~$\kappa \norm{\grad \rho^\kappa}_{L^2}$ becomes \emph{independent} of~$\kappa$ and~$\rho^\kappa_t \to \bar \rho$ at a rate that is independent of the diffusivity (see~\cite{Thiffeault12,MilesDoering18}).
This is reflected in~\eqref{e:rho-L1-Linf}, which provides a \emph{$\kappa$-independent} decay rate for large time.
Notice that this~$\kappa$-independent rate can not be obtained with knowledge of the dissipation time alone.
To the best of our knowledge, there are no continuous time deterministic examples that exhibit this~$\kappa$-independent long time decay rate.

% Indeed, 
% repeatedly applying the dissipation time bound yields
% \begin{equation}
%   \norm{\rho_{s+t} - \bar \rho }_{L^2}
%     \leq \exp\paren[\Big]{-\floor[\Big]{\frac{t }{\tdis}} \log 2 }
%     \,,
% \end{equation}
% which is has a slower decay rate than~\eqref{e:rho-L1-Linf} when~$t$ is large.

On a different note, if $u$ is regular, and uniformly bounded in time, then we must have $\tdis(u,\kappa) \geq O(\abs{\log \kappa})$ (see for instance~\cite{MilesDoering18,Poon96,BedrossianBlumenthalEA21,Seis22}).
When $u$ is irregular, this is no longer the case and one can even have $\tdis(\kappa) = O(1)$ as~$\kappa \to 0$.
This is known as~\emph{anomalous dissipation} and was predicted in certain turbulent regimes Obukhov and Corrsin~\cite{Obukhov49,Corrsin51}.
Examples of this were constructed recently in~\cite{
  DrivasElgindiEA22,
  ColomboCrippaEA22,
  ArmstrongVicol23,
  BrueColomboEA24,
  BurczakSzekelyhidiEA24,
  ElgindiLiss24,
  JohanssonSorella24,
  HessChildsRowan25
}.

Finally we mention that the dissipation time~$\tdis$ is dual to the~$L^2$ \emph{mixing time}.
The mixing time of various Markov processes has been an active area of study for decades~\cite{
  ChenLovaszEA99,
  Diaconis09,
  LevinPeres17
}.
One relevant discrete result is~\cite{ChatterjeeDiaconis20}, where the authors study the mixing time of a random walk with deterministic jumps specified by a bijection.
Two other relevant are~\cite{
  DamakFrankeEA20,
  ChristieFengEA23
}
where the authors use a measure preserving drift to accelerate convergence of an Ornstein--Ullenbeck process.

\subsection*{Plan of this paper}

In Section~\ref{s:construction} we describe the construction of the velocity field~$u^\kappa$ and prove Theorem~\ref{t:ediss-slowR} and Proposition~\ref{p:mixInhomog} modulo two results.
The first (Lemma~\ref{l:kindd}) asserts exponential mixing of~$u^\kappa$ with rate~$h(t) = D_\kappa e^{-\gamma_1 t}$, where~$\gamma_1$ is independent of~$\kappa$, and~$D_\kappa$ is~$\kappa$-dependent and random, but has a~$\kappa$-independent moment bound.
The second (Proposition~\ref{p:tdis-lower}) asserts that no enhanced dissipation occurs for any velocity fields for which~$\norm{u^\kappa}_{W^{-1, \infty}} \leq O(\kappa)$.
Most of the paper is devoted to the proof of Lemma~\ref{l:kindd} and is presented in Section~\ref{s:emuk}.
The main ingredient of the proof is verifying Harris conditions with a controlled~$\kappa$-dependence.
In Section~\ref{s:slowm} we use energy methods to prove a lower bound on the dissipation time (Proposition~\ref{p:tdis-lower}).
Finally in Section~\ref{s:tdiss-bound} we prove Corollary~\ref{cor:exponential mixing dissipation enhancement}.
While the Corollary~\ref{cor:exponential mixing dissipation enhancement} itself is only a small improvement of existing results, the proof we present is much shorter and simpler. 

%\subsection*{Acknowledgements}
%
%\textcolor{red}{TODO}

\section{Proof of Theorem \ref{t:ediss-slowR}}
\subsection{Construction of \texorpdfstring{$u^\kappa$}{ukappa}}\label{s:construction}

In this section, our aim is to describe the velocity field $u^\kappa$ which satisfies the properties of Theorem~\ref{t:ediss-slowR}.
We let the torus~$\T^2$ be the square~$[0, 2\pi]^2$ with opposite sides identified.
Define
\begin{equation}\label{e:Ndef}
  N_\kappa = \ceil*{\frac{1}{\kappa}}
  \,.
\end{equation}
For time~$0 \leq t \leq 1$, we will choose~$u^\kappa$ so that it consists of $N_\kappa$ horizontal shears supported on intervals of size $2\pi/N_\kappa$.
Each shear will be translated by a random phase shift with distribution~$\Unif([0, \pi/N_\kappa] )$.
For time $1\leq t \leq 2$, we will choose $u^\kappa$ by the same construction, but using vertical shears instead of horizontal ones, and independent random shifts.
For~$t \geq 2$ we will repeat this construction, alternating between horizontal and vertical shears, and choosing independent random phase shifts at each step.

To define the shears at each step, we consider an amplitude $A>0$, and a (piecewise)~$C^3$ profile $\varphi \colon [0, 2\pi] \to \R$.
For each time $m\in \N$, and each~$i \in \set{ 0, 1, \ldots, 2N_\kappa-1}$, we choose~$\alpha^m_i$ to be independent phase shifts with distribution
\begin{equation}
  \alpha_i^m \sim \Unif\paren[\Big]{\brak[\Big]{\frac{\pi i}{N_\kappa}, \frac{\pi (i+1)}{N_\kappa}}}
  \,.
\end{equation}
Define~$\varphi_\kappa$ to be the rescaled profile defined by
\begin{equation}\label{e:phiKappaDef}
  \varphi_\kappa(x)\defeq \varphi(N_\kappa x)
\,.
\end{equation}
We will chose the velocity field in Theorem~\ref{t:ediss-slowR} to be
\begin{equation}\label{eq:ukdef}
  u^\kappa_t(x) \defeq
    \begin{dcases}
      A\paren[\Big]{\sum_{i=0}^{2N_\kappa-1} \varphi_\kappa \paren[\Big]{x_2-\alpha_i^{2n}}}e_1 & t\in [2n, 2n+1)\,, \\
	A\paren[\Big]{\sum_{i=0}^{2N_\kappa-1} \varphi_\kappa \paren[\Big]{x_1-\alpha_i^{2n+1}}}e_2 & t\in [2n+1, 2n+2)\,.
    \end{dcases}
\end{equation}
Here~$e_1$, $e_2$ are the standard basis vectors in~$\R^2$.
Theorem~\ref{t:ediss-slowR} will be proved by showing that with positive probability, a choice of the random phase shifts~$\alpha^m_i$ will yield~\eqref{e:ukappa-mixrateR}.

For technical reasons we will assume $\varphi$ satisfies the following properties
\begin{enumerate}[({A}1)]
  %\item The function~$\varphi$ is piecewise~$C^3$, normalized so that~$\norm{\varphi}_{C^1} \leq 1/6$.
  \item\label{i:first-non-degenerate} 
    The set~$\set{\varphi' = 0}$ is finite, and~$\set{\varphi' = 0} \cap \set{\varphi'' = 0} = \emptyset$.
  \item \label{i:second-non-degenerate}
    The set~$\set{\varphi'' = 0}$ is finite, and~$\set{\varphi'' = 0} \cap \set{\varphi''' = 0} = \emptyset$.
\end{enumerate}

\subsection{Proof of Theorem \ref{t:ediss-slowR}}
\label{s:proof of theorem}

We will prove Theorem~\ref{t:ediss-slowR} in two steps.
First we will show that the velocity field defined in~\eqref{eq:ukdef} is almost surely exponentially mixing and satisfies~\eqref{e:ukappa-mixrateR} with~$m = 0$ and a random~$D_\kappa$ that depends on~$\kappa$, and a deterministic constant~$\gamma_1$ that is independent of~$\kappa$.
This is our first lemma.

\begin{lemma}
\label{l:kindd}
  Fix~$q < \infty$, and let~$u^\kappa$ be as in~\eqref{eq:ukdef}.
  For all sufficiently large~$A$, there exists a (deterministic) constant~$\gamma_1 > 0$ such that the following holds.
  For all sufficiently small~$\kappa > 0$, there exists a random~$D_\kappa > 0$ such that every solution to the transport equation~\eqref{e:transport} with initial data~$\phi_0 \in \dot H^1$ satisfies the mixing bound for all $n \in 2\N$
      \begin{equation}
      \label{e:ukappa-mixrate-kappa}
	\norm{\phi_n}_{H^{-1}}
	  \leq D_\kappa \exp\paren[\big]{-\gamma_1 n} \norm{\phi_0}_{H^1}
	  \,.
      \end{equation}
  Moreover, there exists a finite deterministic constant~$\bar D_q$, that is independent of~$\kappa$, such that
  \begin{equation}
  \label{e:Dbound}
    \E D_\kappa^q < \bar D_q
    \,.
  \end{equation} 
\end{lemma}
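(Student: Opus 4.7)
The plan is to follow the Dolgopyat--Kaloshin--Koralov framework~\cite{DolgopyatKaloshinEA04}, as adapted in~\cite{BedrossianBlumenthalEA21, BlumenthalCotiZelatiEA23, CoopermanIyerEA25}: verify Harris-type conditions for the projective two-point chain associated with the random dynamical system generated by~$u^\kappa$, with all constants controlled uniformly in~$\kappa$. Harris' theorem then produces a spectral gap in a weighted total variation norm for the two-point Markov semigroup, which, via the standard duality between two-point decay of correlations and decay of~$\norm{\phi}_{H^{-1}}$, yields an~$L^p(\Omega)$ bound on~$\sup_{\norm{\phi_0}_{H^1} \leq 1} e^{\gamma_1 n} \norm{\phi_n}_{H^{-1}}$ that is uniform in~$\kappa$ for every~$p < \infty$. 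A Borel--Cantelli argument then upgrades this to the almost sure bound~\eqref{e:ukappa-mixrate-kappa} with a random prefactor~$D_\kappa$ satisfying~$\E D_\kappa^q \leq \bar D_q$ for any fixed~$q < \infty$.

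The main obstacle, as highlighted in the introduction, is constructing a Lyapunov function whose drift inequality has~$\kappa$-independent constants. A direct application of Furstenberg's criterion fails because the resulting Lyapunov exponent depends on the~$C^1$-norm of~$\varphi_\kappa$, which blows up like~$1/\kappa$. To circumvent this, I would work in rescaled coordinates~$y = N_\kappa x$, in which each shear profile becomes the~$\kappa$-independent function~$\varphi$ and the two-point separation cocycle acquires a~$\kappa$-independent law modulo the deterministic alternation between horizontal and vertical shears. A Lyapunov function~$V$ on the diagonal-deleted two-point space (equivalently, on the projective tangent bundle over the deleted diagonal) is then built explicitly, penalizing proximity to the ``bad set''~$\set{\varphi' = 0}$ where no shear expansion occurs. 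Assumptions~(A1) and~(A2) ensure this bad set is finite and the transverse derivatives are bounded below, which is precisely what allows the drift estimate~$\E[V \circ T] \leq \theta V + K$ to close with~$\theta < 1$ and~$K < \infty$ independent of~$\kappa$.

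The remaining Harris conditions---topological irreducibility and minorization on sublevel sets of~$V$---must then be verified with~$\kappa$-uniform constants. The key input is the absolute continuity of the phase shifts~$\alpha_i^m$: for any two-point configuration with~$V$ bounded, after a finite number of alternating-shear steps the image configuration has a joint density bounded below on a fixed reference set, where the non-degeneracy of~$\varphi$ guarantees that the parameter-to-image map has full-rank derivative on a set of nontrivial measure. Once all Harris conditions hold with~$\kappa$-uniform constants, the quantitative form of Harris' theorem gives the spectral gap for the two-point chain and hence the~$L^p(\Omega)$ mixing estimate sketched in the first paragraph. The random prefactor~$D_\kappa$ is extracted by choosing~$p > q$ and applying Borel--Cantelli together with the time stationarity of the law of~$u^\kappa$, which controls the probability of long runs of ``bad'' mixing windows; the same stationarity and the uniform~$L^p$ bound deliver the moment estimate~\eqref{e:Dbound}.
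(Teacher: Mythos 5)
Your overall framework (Harris conditions for the two-point chain, an explicit Lyapunov function to bypass Furstenberg, then the Dolgopyat--Kaloshin/Borel--Cantelli step to extract a random prefactor with uniform moments) is the same as the paper's, but the two steps you lean on to get $\kappa$-uniformity do not hold as stated, and they are exactly where the difficulty lies. First, the rescaling $y = N_\kappa x$ does not produce a $\kappa$-independent system: the torus becomes $[0,2\pi N_\kappa]^2$, and the derivative of each shear step is $1 + A N_\kappa \varphi'(\cdot)\, e_1\otimes e_2$ in \emph{any} coordinates, so the law of the two-point separation cocycle retains an explicit $N_\kappa$ and the problem is not reduced to a fixed chain. (In the paper this $\kappa$-dependence is not removed but exploited: the probability that the relevant combination of three independently shifted profile increments fails to expand is shown to be $O((AN_\kappa)^{-1/6})$ via an implicit-function-theorem/anti-concentration argument using (A\ref{i:first-non-degenerate})--(A\ref{i:second-non-degenerate}); nothing in your sketch supplies this estimate, and a positional penalty for $\set{\varphi'=0}$ is not the right mechanism anyway, since fresh independent phases are drawn at every step and the failure of expansion is an event in the phases, not a persistent feature of the state.)

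Second, your claim that the drift inequality closes with \emph{both} $\theta<1$ and $K$ independent of $\kappa$, and that minorization holds from sublevel sets of $V$ onto a fixed reference set with $\kappa$-uniform constants, cannot both be true for this construction. If your $V$ does not blow up at the diagonal, its sublevel sets contain pairs at arbitrarily small separation, and no $O(1)$ number of steps of a flow with Lipschitz constant $O(N_\kappa)$ can spread such pairs onto a fixed reference measure with uniform probability, so the small-set condition fails. If instead $V$ blows up at the diagonal (as it must for the two-point argument to control correlations), then the additive constant is forced to be $\kappa$-dependent: away from the diagonal the map can contract by powers of $N_\kappa$, which is why the paper's $V(x,y)=\abs{x-y}_\infty^{-p}$ satisfies~\eqref{e:lyapunovFoster} only with $K_\kappa \sim \kappa^{-3p}$, a $\kappa$-dependent small set $\set{V\le R_\kappa}$, and a $\kappa$-dependent minorizing measure $\nu_\kappa$ (Lebesgue off a $1/N_\kappa$-neighborhood of the first-coordinate diagonal), with only the multiplicative constants $\gamma_1,\alpha$ uniform. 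The lemma then follows because this residual $\kappa$-dependence enters the Harris/weighted-norm machinery only through $\beta_\kappa \sim \kappa^{3p}$, and the observables $e^{(2)}_m$ are bounded, so the final rate and moment bounds are uniform; this bookkeeping is an essential part of the proof and is absent from your proposal, which instead asserts full $\kappa$-uniformity of all Harris constants. To repair the argument you would need (i) the near-diagonal anti-concentration estimate for the randomly shifted shear increments, and (ii) an explicit accounting of how the unavoidable $\kappa$-dependence of $K_\kappa$, $R_\kappa$, $\nu_\kappa$ drops out of the final mixing bound.
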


Next we will prove a general result that shows that if~$u^\kappa$ is~$O(\kappa)$ in~$W^{-1, \infty}$, then enhanced dissipation can not occur.
\begin{proposition}\label{p:tdis-lower}
    Suppose $u^\kappa$ is a family of velocity fields such that there exists a~$d \times d$ skew-symmetric matrix valued function~$H^\kappa$ such that
    \begin{equation}\label{e:w-1inftybound}
      u^\kappa_t = \dv H^\kappa_t
	\quad\text{and}\quad
        %\norm{u^\kappa}_{L^\infty([0, \infty), W^{-1, \infty}(\T^2))} \leq C_0 \kappa\,,
        \norm{H^\kappa}_{L^\infty} \leq C_0 \kappa\,,
    \end{equation}
    for some constant~$C_0$.
    Then for every~$s \geq 0$ we have
    \begin{equation}\label{e:tdisLower}
        \tdis^s(u^\kappa, \kappa)\geq \frac{C_1}{\kappa}\,,
	\quad\text{where}\quad
      C_1 = \min\set[\Big]{
	  \frac{\log(4/3)}{2 \iffalse\lambda_1\fi},
	  \frac{1}{8 C_0^2 \iffalse\lambda_1\fi }
	}
	\,.
    \end{equation}
    %where~$\lambda_1 = 1$ is the first nonzero eigenvalue of the negative Laplacian on~$\T^2$.
\end{proposition}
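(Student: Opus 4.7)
The plan is to exhibit one specific initial condition whose $L^2$ norm dissipates slowly. Concretely, I would take $\theta^\kappa_s$ to be an eigenfunction of $-\lap$ corresponding to the smallest nonzero eigenvalue $\lambda_1$ (so $\norm{\theta^\kappa_s}_{H^{-1}}^2 = \norm{\theta^\kappa_s}_{L^2}^2/\lambda_1$) and prove $\norm{\theta^\kappa_{s+T}}_{L^2} > \norm{\theta^\kappa_s}_{L^2}/2$ whenever $T \le C_1/\kappa$. The guiding idea is that the divergence structure $u^\kappa=\dv H^\kappa$ together with $\norm{H^\kappa}_{L^\infty}=O(\kappa)$ makes the advection only a small perturbation of pure heat flow on the $H^{-1}$ scale, and on the lowest eigenspace the $H^{-1}$ and $L^2$ norms are comparable.

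Let $E(t)\defeq\norm{\theta^\kappa_t}_{L^2}^2$, $F(t)\defeq\norm{\theta^\kappa_t}_{H^{-1}}^2$, and $\psi_t\defeq(-\lap)^{-1}\theta^\kappa_t$. The standard $L^2$ identity gives $E'(t)=-2\kappa\norm{\nabla\theta^\kappa_t}_{L^2}^2$. Testing~\eqref{e:ad} against $\psi_t$, using $\kappa\int\psi_t\lap\theta^\kappa_t=-\kappa E(t)$, and integrating by parts against $u^\kappa=\dv H^\kappa$ (where skew-symmetry of $H^\kappa$ forces $\sum_{ij}H^\kappa_{ij}\partial_i\partial_j\psi_t=0$), one obtains
\begin{equation*}
  \tfrac{1}{2}F'(t)+\kappa E(t) = \int_{\T^d}H^\kappa_{ij}(t,x)\,\partial_i\theta^\kappa_t(x)\,\partial_j\psi_t(x)\,dx.
\end{equation*}
Cauchy--Schwarz and~\eqref{e:w-1inftybound} bound the right-hand side in absolute value by $C_0\kappa\sqrt{F(t)}\,\norm{\nabla\theta^\kappa_t}_{L^2}$, and Young's inequality $2ab\le a^2+b^2$ rearranges (after using $-\kappa\norm{\nabla\theta^\kappa_t}_{L^2}^2=E'(t)/2$) to the differential inequality $(F-\tfrac{1}{2} E)'(t)\ge -C_0^2\kappa F(t)-2\kappa E(t)$.

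Since $E$ is non-increasing and Poincar\'e's inequality gives $F(t)\le E(t)/\lambda_1$, integrating this inequality on $[s,s+T]$ with the crude bounds $E(t),\lambda_1 F(t)\le E(s)$ produces
\begin{equation*}
  \bigl(F-\tfrac{1}{2} E\bigr)(s+T)\ge \bigl(F-\tfrac{1}{2} E\bigr)(s)-\bigl(C_0^2/\lambda_1+2\bigr)\kappa T\, E(s).
\end{equation*}
Substituting $F(s)=E(s)/\lambda_1$ on the right and $F(s+T)\le E(s+T)/\lambda_1$ on the left yields $E(s+T)\ge E(s)\bigl(1-c(C_0,\lambda_1)\kappa T\bigr)$ for an explicit constant $c$, which for $\kappa T$ below an explicit threshold exceeds $E(s)/4$ and proves~\eqref{e:tdisLower}. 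I expect the main obstacle to be the bookkeeping needed to match the exact constants in $C_1=\min\{\log(4/3)/(2\lambda_1),\,1/(8C_0^2\lambda_1)\}$; the two terms in the minimum most likely come from splitting into ``small $C_0$'' (where a diffusion-alone bound dominates) and ``large $C_0$'' (where the skew-transport perturbation dominates), optimized with different choices of the Young parameter.
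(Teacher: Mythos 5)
Your argument is correct in substance, but it takes a genuinely different route from the paper. The paper compares $\theta^\kappa$ with the solution $\varphi^\kappa$ of the pure heat equation started from the same lowest eigenfunction: writing $w=\theta^\kappa-\varphi^\kappa$, the skew structure $u^\kappa\cdot\grad\theta^\kappa=\grad\cdot(H^\kappa\grad\theta^\kappa)$ and $\norm{H^\kappa}_{L^\infty}\le C_0\kappa$ give $\tfrac12\partial_t\norm{w}_{L^2}^2\le\tfrac12 C_0^2\kappa\norm{\grad\varphi^\kappa_0}_{L^2}^2$, hence $\norm{w_t}_{L^2}^2\le C_0^2\kappa t$, and the conclusion follows from $\norm{\varphi^\kappa_t}_{L^2}^2=e^{-2\kappa t}\ge\tfrac34$ and $\norm{w_t}_{L^2}^2\le\tfrac18$ — which is exactly where the two terms $\log(4/3)/2$ and $1/(8C_0^2)$ in $C_1$ come from (not, as you guessed, from two choices of a Young parameter). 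You instead avoid any comparator and run a coupled $L^2$/$H^{-1}$ energy argument: your identity $\tfrac12 F'+\kappa E=\int H^\kappa\grad\theta^\kappa\cdot\grad\psi$, the bound by $C_0\kappa\sqrt F\,\norm{\grad\theta^\kappa}_{L^2}$, and the Young step producing $(F-\tfrac12E)'\ge-C_0^2\kappa F-2\kappa E$ are all correct, and with the eigenfunction data and Poincar\'e this does yield $E(s+T)\ge E(s)(1-c\,\kappa T)$ and hence a lower bound of the form \eqref{e:tdisLower}. What each buys: the paper's comparator argument gives the stated constant essentially for free and extends verbatim to any initial time $s$ by time-translation; your version is self-contained (no heat semigroup decay needed) but is slightly more delicate about constants — the weight $\tfrac12$ in $F-\tfrac12E$ silently uses $\lambda_1=1<2$ (fine on the paper's torus, but the weight must be tied to $\lambda_1$ in general), and with your crude bounds the threshold comes out as $3/(8(C_0^2+2))$, which is marginally below the claimed $C_1$ for $C_0^2$ slightly less than $1$, so matching the exact statement requires the small optimization of the weight/Young parameter that you anticipated. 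These are bookkeeping adjustments, not gaps in the method.
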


Momentarily postponing the proofs of Lemma~\ref{l:kindd} and Proposition~\ref{p:tdis-lower} we prove Theorem~\ref{t:ediss-slowR}. We introduce the following notation.
\begin{definition}\label{def:solution operator}
  For $0 \leq s \leq t$, $\kappa \geq 0$, and a flow $u \colon [0,\infty) \times \T^d \to \R^d$, let $\sol^{u,\kappa}_{s,t} : \dot L^2 \to \dot L^2$ be the solution operator to the advection diffusion equation~\eqref{e:ad}.
  That is, if~$\theta_s \in L^2$, then $\theta_t \defeq \sol^{u, \kappa}_{s, t} \theta_s$ is the unique solution to~\eqref{e:ad} with initial data~$\theta_s$ at time~$s$.
\end{definition}

\begin{proof}[Proof of Theorem~\ref{t:ediss-slowR}]
  \restartsteps\step
  We note that by the definition of $u^\kappa$~\eqref{eq:ukdef}, each $x_2 \in [\frac{\pi}{N_\kappa}i, \frac{\pi}{N_\kappa}(i+1)]$ can be hit by horizontal shears of phase shifts $\alpha_{i-2}, \alpha_{i-1}$, and $\alpha_i$ at time $[2n, 2n+1)$. This property directly implies
\begin{equation}
    \norm{u^\kappa}_{L^\infty([0,\infty \times \T^d])} \leq 3A\norm{\varphi}_\infty 
    \quad\text{and}\quad
    \norm{u^\kappa}_{L^\infty([0,\infty),C^1(\T^d) )} \leq 3AN_\kappa\norm{\varphi'}_\infty\,,
\end{equation}
  proving~\eqref{e:uniformC1R}.
  \smallskip

\step
  To prove exponential mixing with a~$\kappa$-independent rate, we use Lemma~\ref{l:kindd} with $q=1$, together with the time stationarity of the law of the velocity field $u^\kappa$, we see that for $m,n \in 2\N$
\[\E \|\sol_{m,n+m}^{u^\kappa,0}\|_{H^1 \to H^{-1}} =\E \|\sol_{0,n}^{u^\kappa,0}\|_{H^1 \to H^{-1}} \leq \bar D_1 e^{-\gamma_1 n}.\]
Then define
  \begin{equation}
  K \defeq \sum_{m \in 2\N}
  \sum_{\substack{n \in 2\N,\\
    n\geq m+4 \frac{\log (m+2)}{\gamma_1} }}
    e^{\gamma_1(n-m)/2}\|\sol^{u_\kappa,0}_{m,n}\|_{H^1 \to H^{-1}}
    \,,
  \end{equation}
and note
  \begin{equation}\label{e:EK}
  \E
  K \leq \sum_{m \in 2\N} \sum_{\substack{ n \in 2\N\,,\\
    n\geq m+4 \frac{\log (m+2)}{\gamma_1}} }
    e^{-\gamma_1(n-m)/2} \leq C \sum_{m \in 2\N} \frac{1}{m^2} \leq C
    < \infty
\,.
  \end{equation}

  Now, for any $m,n \in 2\N$ such that $0 \leq m \leq n$ and $n \geq m + 4 \gamma_1^{-1} \log(m+2)$, we must have
\[\|\sol_{m,n}^{u^\kappa,0}\|_{H^1 \to H^{-1}} \leq Ke^{- \gamma_1 (n-m)/2} .\]
On the other hand if $m \leq n \leq m+4 \gamma_1^{-1} \log(m+2)$, then 
\[\|\sol_{m,n}^{u^\kappa,0}\|_{H^1 \to H^{-1}} \leq 1 \leq e^{2 \log(m+2)} e^{-\gamma_1(n-m)/2} \leq (m+2)^2 e^{-\gamma_1(n-m)/2}.\]
Thus in any case, we have that for any $m,n \in 2\N, 0 \leq m \leq n,$
\[\|\sol_{m,n}^{u^\kappa,0}\|_{H^1 \to H^{-1}} \leq (K + (m+2)^2) e^{- \gamma_1(n-m)/2}.\]
  Replacing~$\gamma_1$ with~$\gamma_1/2$, using~\eqref{e:EK} and Chebychev's inequality will show~\eqref{e:ukappa-mixrateR} for some~$\kappa$-independent constant~$D$.
  \smallskip

\step
  Finally, to estimate $\norm{u^\kappa}_{W^{-1,\infty}}$ we note that for~$t \in [0, 1)$ we have
\begin{equation}
  u^\kappa_t = \grad^\perp H
  \quad\text{where}\quad
  H(x) = A\sum_{i=0}^{N_\kappa-1} \int_0^{x_2} \varphi_\kappa \paren[\Big]{y-\frac{i \pi}{N_\kappa}-\alpha_i^{0}}
  \, dy
  \,.
\end{equation}
Since the support of at most three terms in the sum intersect, we have
\begin{equation}
  \norm{u^\kappa_t}_{W^{-1, \infty}} \leq \norm{H}_{L^\infty}
    \leq 3 A \norm{\varphi_\kappa}_{L^1}
      \overset{\eqref{e:phiKappaDef}}{\leq} 3 A \kappa \norm{\varphi}_{L^1}
  .
\end{equation}
The same bound is true for~$t \geq 1$.
Using this in Proposition~\ref{p:tdis-lower} immediately implies~\eqref{e:tdis-lowerR}.
\end{proof}

For completeness, we also include a proof of Proposition~\ref{p:mixInhomog}.

\begin{proof}[Proof of Proposition~\ref{p:mixInhomog}]
  Lemma~3.3 in~\cite{CoopermanIyerEA25} shows that~\eqref{e:expdecay} holds with~$s = 0$.
  Now the same argument used in the proof of Theorem~\ref{t:ediss-slowR} will show~\eqref{e:expdecay} for all~$s \geq 0$.

  The enhanced dissipation estimate~\eqref{e:edissInhomog} is proved similarly.
  Indeed Lemma~3.3 in fact shows that for any~$q < \infty$ and sufficiently small~$\kappa \geq 0$, the flow of the SDE~\eqref{e:SDEX} is almost surely exponentially mixing with rate~$h(0, t) = D_{\kappa, q} e^{-\gamma t}$.
  Here~$D_{\kappa,q}$ depends on~$\kappa$, but has a~$q$-th moment that is bounded uniformly in~$\kappa$.
  Now the Borel--Cantelli like argument used in the proof of Theorem~\ref{t:ediss-slowR} can be repeated to show the flow of the SDE~\eqref{e:SDEX} is exponentially mixing, with rate
  \begin{equation}
    h(s, t) = C D_{q, \kappa} (1 + s^2) e^{-\gamma t}
    \,.
  \end{equation}
  Following the proof of Theorem~1.1 in~\cite{CoopermanIyerEA25} will yield~\eqref{e:edissInhomog} as desired.
\end{proof}

We prove Lemma~\ref{l:kindd} in Section~\ref{s:emuk} and Proposition~\ref{p:tdis-lower} in Section~\ref{s:slowm}.

\section{Exponential mixing of \texorpdfstring{$u^\kappa$}{ukappa} (Lemma \ref{l:kindd})} \label{s:emuk}

We will prove~$u^\kappa$ is exponentially mixing by verifying the conditions of a Harris theorem.
Once these conditions are verified, we use the Harris theorem to prove geometric ergodicity of the \emph{two point} chain.
Then a result of~\cite{DolgopyatKaloshinEA04} will show that the flows~$u^\kappa$ are \emph{almost surely} exponentially mixing.
This general principle is well-known and has been used in several recent papers~\cite{BedrossianBlumenthalEA21, BlumenthalCotiZelatiEA23, CoopermanIyerEA25}.

The main difficulty in our situation is verifying the conditions of the Harris theorem.
This typically requires finding a small set and a Lyapunov function.
Existing methods~\cite{BedrossianBlumenthalEA21, BlumenthalCotiZelatiEA23} prove the existence of a Lyapunov function \emph{non-constructively} using Furstenberg's criterion.
This can not be used in our situation as it will only prove exponential mixing where the rate constants depend on~$\kappa$ in an uncontrolled manner.
Instead, we will provide an explicit Lyapunov function that is \emph{independent} of~$\kappa$, allowing us to obtain~\eqref{e:ukappa-mixrateR} with~$\kappa$-independent constants~$D, \gamma_1$.

\subsection{Notation and preliminaries}
We define the flow map $\Phi^\kappa: [0, \infty)\times \T^2 \to \T^2$, corresponding to the velocity field $u^\kappa$~\eqref{eq:ukdef}, as the solution to the ODE 
\begin{align*}
  \partial_t\Phi^\kappa_t(x) &= u^\kappa(t,\Phi^\kappa_t(x))\,,\\
  \Phi^\kappa_0(x) &= x\,.
\end{align*}
Notice $(\Phi^\kappa_n)_{n\in \N}$ is a $\T^2$-valued \emph{random dynamical system} (RDS).
Let $(P^\kappa_n)_{n\in \N}$ denote the~$n$-step transition probability
\begin{equation}
  P^\kappa_n(x, A) = \P[\Phi^\kappa_{2n}( x) \in A]\,, \qquad A \in \mathcal B(\T^2)\,.
\end{equation}
The two-point RDS $\Phi^{\kappa, (2)}: [0, \infty) \times \T^{2, (2)} \to \T^{2, (2)}$ is defined by
\begin{equation}
  \Phi^{\kappa, (2)}_t( x,y) = (\Phi^\kappa_t( x), \Phi^\kappa_t( y))\,,
\end{equation}
where $\T^{2, (2)}\defeq (\T^2 \times \T^2) \setminus \Delta$ and $\Delta\defeq \set{(x,y)\in \T^2\st x=y}$.
We use $(P^{\kappa, (2)}_n)_{n\in \N}$ to denote the transition probabilities for the two point chain.
Explicitly,
\begin{equation}
  P^{\kappa,(2)}_{n}((x,y), A) \defeq \P[\Phi^{\kappa, (2)}_{2n}( (x,y))\in A]\,, \qquad A\in \mathcal B(\T^{2, (2)})\,.
\end{equation}

\subsection{The Harris conditions}

We first use a Harris theorem to prove geometric ergodicity of the two point chain, for which we need a Lyapunov function and a small set condition.
As mentioned earlier, the Lyapunov function is usually produced using non-constructive arguments using Furstenberg's criterion, in a manner that the~$\kappa$-dependence is unknown.
We avoid this issue by explicitly producing a~$\kappa$-independent Lyapunov function.
Explicitly, given~$p > 0$ we define the Lyapunov function~$V \colon \T^{2, (2)} \to [1, \infty)$ by
\begin{equation}\label{e:lyapunov}
  V(x,y) \defeq \abs{x-y}_\infty^{-p}
  \quad\text{where}\quad
  \abs{x - y}_\infty
    \defeq \max\set{
      d(x_1, y_1), d( x_2, y_2 )
    }
  \,.
\end{equation}
Here we used the convention that~$x = (x_1, x_2) \in \T^2$, $y = (y_1, y_2) \in \T^2$, and~$d(\cdot, \cdot)$ denotes the distance function on~$\T^1$.
We will now show that~$V$ satisfies a Foster--Lyapunov drift condition with a~$\kappa$-dependent additive constant and a small set condition with a~$\kappa$-dependent minorizing measure.
Nevertheless, we will ensure the multiplicative constants are \emph{independent} of~$\kappa$, which will lead to the proof of Lemma~\ref{l:kindd}.

\begin{lemma}[Harris conditions]\label{l:harrisConditions}
  There exists constants $p\in(0,\frac{1}{12})$,  $\alpha, \gamma_1\in (0,1)$, and $K_1>0$ such that for all sufficiently small~$\kappa > 0$ the following hold.
  \begin{enumerate}\reqnomode
    \item
      The function~$V$ defined by~\eqref{e:lyapunov} satisfies the Foster--Lyapunov drift condition
      \begin{equation}\label{e:lyapunovFoster}%\label{eq:Kdef}
	P^{\kappa,(2)}_{3}V \leq \gamma_1 V+ K_\kappa\,,
	\quad\text{where}\quad
	   K_\kappa \defeq \frac{K_1}{\kappa^{3p}}\,.
    \end{equation}   
    \item 
      There exists~$R_\kappa > 2K_\kappa / (1 - \gamma_1)$, and a probability measure~$\nu_\kappa$ such that
      \begin{equation}\label{eq:smallset}
	\inf_{\set{V \leq R_\kappa}} P^{\kappa,(2)}_{3}(x, \cdot) \geq \alpha  \nu_\kappa(\cdot)
	\,.
      \end{equation}
  \end{enumerate}
\end{lemma}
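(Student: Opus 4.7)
The plan is to handle the two assertions separately. The drift condition is the main input and reduces to a Furstenberg-type moment estimate on a product of random shear Jacobians with $\kappa$-independent constants, while the small set condition is a controllability statement that follows from the smooth, non-degenerate dependence of the six-step map on the random phase shifts.

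For the drift condition, for a pair $(x,y)$ with small separation $v = y - x$, six shear steps give the linearization $v^{(6)} = M_6 M_5 \cdots M_1 v + O(|v|^2)$, where each $M_{2k-1}$ is a horizontal shear Jacobian with off-diagonal entry $A \sum_i \varphi_\kappa'(x_2^{(k)} - \alpha_i^{2k-2})$ and each $M_{2k}$ is the corresponding vertical shear Jacobian. After the change of variables $w = N_\kappa(x_2 - \alpha_i^m)$, each off-diagonal entry is distributed as $A N_\kappa \varphi'(W)$ with $W$ uniform on a unit-length interval of $\R$; this rescaling is the mechanism by which the $\kappa$ dependence factors out of the statistical description. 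The key input, which I would establish using the independence of the $\alpha^m_i$ together with the non-degeneracy assumptions (A1)--(A2) on $\varphi$, is the Furstenberg-type moment estimate
\[
  \E\bigl[\, |M_6 M_5 \cdots M_1 \hat v|^{-p} \,\bigr] \leq \gamma_1
  \quad\text{for every unit vector } \hat v \in \R^2 \text{ and every } p \in (0, p_0],
\]
with $\gamma_1 < 1$ and $p_0 \in (0, 1/12)$ both independent of $\kappa$. Assumptions (A1)--(A2) guarantee that $\varphi'$ vanishes only transversally on a finite set, and so yield a $\kappa$-independent lower bound on the probability that $|\varphi'|$ is bounded below at each step; this is what keeps the expanding direction of the product from accumulating near a critical direction. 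In the complementary regime where $|v|_\infty$ is not small compared to the oscillation scale $\kappa$ and the linearization breaks down, I would use the worst-case Lipschitz bound $\|\nabla u^\kappa\|_{L^\infty} \lesssim A/\kappa$ together with incompressibility to show that $V$ can grow by at most a power of $A/\kappa$ per three-step iteration; this contribution is absorbed into the additive constant $K_\kappa = K_1 \kappa^{-3p}$ (after taking $p \in (0,1/12)$ sufficiently small).

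For the small set condition, fix $R_\kappa > 2 K_\kappa / (1 - \gamma_1)$, so that $\{V \leq R_\kappa\}$ consists of pairs with $|v|_\infty \geq R_\kappa^{-1/p}$. Viewing the six-step map as a smooth function of the independent phase shifts $(\alpha^m_i)_{m=0,\ldots,5,\, i}$, I would show that on the complement of the diagonal the Jacobian of this map in the $\alpha$-parameters is non-degenerate on some explicit open subset of parameter space, so that the push-forward of the product-uniform law has a density bounded below on a ball of fixed radius in $\T^{2,(2)}$. Taking $\nu_\kappa$ to be the normalization of this density gives the minorization $P^{\kappa,(2)}_3(x,\cdot) \geq \alpha\, \nu_\kappa(\cdot)$ with $\alpha > 0$ independent of $\kappa$.

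The main obstacle is the Furstenberg-type moment estimate with $\kappa$-independent constants: the standard non-constructive route via Furstenberg's criterion would give a $\gamma_1$ that degenerates as $\kappa \to 0$, destroying the $\kappa$-uniform mixing rate needed for Theorem \ref{t:ediss-slowR}. The scaling $\varphi_\kappa(\cdot) = \varphi(N_\kappa\,\cdot)$ together with the uniform law of the shifts is designed precisely to reduce the expansion problem, after the change of variables $w = N_\kappa(\cdot)$, to a $\kappa$-independent statement about a random product driven by uniform samples of $\varphi'$; carrying out the full six-step product estimate while tracking the dependence on the tangent direction $\hat v$, and verifying that a single $p \in (0,1/12)$ works in the moment bound, constitutes the bulk of the technical work.
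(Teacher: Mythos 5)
Your overall strategy---an explicit, $\kappa$-independent Lyapunov function $V=|x-y|_\infty^{-p}$ together with anticoncentration supplied by (A1)--(A2) after rescaling by $N_\kappa$---matches the paper's, but both halves of your plan have genuine gaps. For the drift condition, you linearize the six-step two-point dynamics and ask for a Furstenberg-type bound $\E\,|M_6\cdots M_1\hat v|^{-p}\le \gamma_1$, absorbing the regime where the separation is comparable to $\kappa$ into $K_\kappa$. First, the linearization regime is typically left \emph{mid-product}: the separation grows by a factor of order $AN_\kappa$ at each shear, so pairs starting deep inside the regime exit it before six steps are over; your dichotomy only inspects the initial separation, and trajectories that exit mid-way are covered neither by the moment estimate nor by your additive term. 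Second, even the crude absorption does not produce the stated constant: off the diagonal one only has $V\le C\kappa^{-p}$, and the worst-case growth over three composite steps is of order $(AN_\kappa)^{6p}$ by \eqref{e:Vstupid}, so a block-of-three argument yields an additive term of order $\kappa^{-7p}$, not $K_1\kappa^{-3p}$. This is not cosmetic: $K_\kappa$ fixes $R_\kappa$ and hence how close to the diagonal the small set $\{V\le R_\kappa\}$ reaches, and a three-step minorization is only feasible because that set stops at separation of order $\kappa^{3}$. The paper sidesteps both issues by proving a \emph{one-composite-step} drift inequality near the diagonal (Lemma~\ref{l:lyapunov}), using the exact divided-difference function $f$ of \eqref{e:fDef} rather than the derivative, and then iterating the one-step inequality three times; your per-step anticoncentration computation is essentially the right one, but it must be run one composite step at a time, not as a six-fold product.

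For the small-set condition, the assertion that the Jacobian of the three-step map in the phase variables is ``non-degenerate off the diagonal,'' hence gives a density bounded below on a fixed ball uniformly over $\{V\le R_\kappa\}$ with $\kappa$-independent $\alpha$, skips exactly the hard point. The small set contains pairs at distance of order $\eta\kappa^{3}$, far below the cell scale $\pi/N_\kappa$; such pairs are driven by essentially the same phases, and the derivative of the pair map in the difference directions is of size $O(A\kappa)$ at the first step, so any lower bound obtained this way degenerates as the initial separation approaches $\eta\kappa^3$ and cannot be uniform in $\kappa$ over this $\kappa$-dependent set. One must first \emph{use the expansion} to push such pairs to separation of order $1/N_\kappa$---this consumes two of the three steps and reuses the anticoncentration machinery (Lemmas~\ref{l:push-to-off-diagonal} and~\ref{l:push-to-off-diagonal2})---and only then can one horizontal/vertical pair of shears randomize the position (Lemma~\ref{l:s3-small}), which in turn needs $A$ large so that $A\varphi(\Unif)$ wraps around the torus (Lemma~\ref{l:density-lb}), a requirement your sketch omits. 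Note finally that the natural minorizing measure is $\kappa$-dependent, namely Lebesgue measure off a width-$\eta/N_\kappa$ neighborhood of $\{w_1=\tilde w_1\}$, because the last vertical shear can only randomize the second coordinates when the first coordinates are at least a cell-width apart; a fixed ball would also serve for small $\kappa$, but only once this two-stage structure, and its coordination with $R_\kappa$, is actually in place.
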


Lemma~\ref{l:harrisConditions} immediately yields Lemma~\ref{l:kindd} using established methods, and we outline the proof in Section~\ref{s:proofHarris} below.
The bulk of this section is devoted to checking~\eqref{e:lyapunovFoster} and~\eqref{eq:smallset}.

\subsection{The near diagonal Foster--Lyapunov drift condition}

We now prove that the function~$V$ defined in~\eqref{e:lyapunov} satisfies the Foster--Lyapunov drift condition~\eqref{e:lyapunovFoster} near the diagonal.
We will shortly see that this leads to~\eqref{e:lyapunovFoster} everywhere by introducing an additive constant.
%Motivated by the Lyapunov function for the Pierrehumbert flow in the Proposition 9.1 of~\cite{CoopermanIyerEA25}, we prove that the same Laypunov function works for this RDS $\Phi^{\kappa, (2)}$.

\begin{lemma}[Near diagonal drift condition]\label{l:lyapunov}
There exists $\kappa$-independent constants $p\in(0,\frac{1}{12})$, $\gamma\in (0,1)$, and $s_*>0$ such that for all sufficiently small $\kappa$, the function~$V$ defined in~\eqref{e:lyapunov}
satisfies
\begin{equation}\label{eq:lyapunov-condition}
  P^{\kappa,(2)}V \leq \gamma V
  \,,
  \quad \text{on}\quad
  \Delta\paren[\Big]{\frac{s_*}{N_\kappa}}\,.
\end{equation}
  Here, $\Delta(a) \defeq \set{(w,\tilde w)\in \T^{2, (2)}\st \abs{w - \tilde w}_\infty < a}$ is a neighborhood of the diagonal with width~$a$.
\end{lemma}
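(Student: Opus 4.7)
The plan is to compute in the rescaled displacement $\eta \defeq N_\kappa (w - \tilde w)$, so that the hypothesis $(w, \tilde w) \in \Delta(s_*/N_\kappa)$ becomes $|\eta|_\infty \leq s_*$ and $V = N_\kappa^p |\eta|_\infty^{-p}$. Setting $M \defeq A N_\kappa$ and $g(z, \eta_0) \defeq \varphi(z) - \varphi(z - \eta_0)$, the composition of a horizontal then a vertical shear maps $\eta$ to
\begin{equation*}
  \eta^{(1)} = \bigl(\eta_1 + M g(z_h, \eta_2),\, \eta_2\bigr), \qquad
  \eta^{(2)} = \bigl(\eta_1^{(1)},\, \eta_2 + M g(z_v, \eta_1^{(1)})\bigr),
\end{equation*}
where, thanks to the uniform law of the phases $\alpha_i^m$, the random variables $z_h, z_v$ are independent and essentially uniform on $[0, 2\pi]$ (the fact that a bounded number of translated copies of $\varphi_\kappa$ contribute at each point is addressed below). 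The target estimate is $\E |\eta^{(2)}|_\infty^{-p} \leq C_0 M^{-p} |\eta|_\infty^{-p}$: since $M = A N_\kappa \to \infty$ as $\kappa \to 0$, the prefactor $C_0 M^{-p}$ is below any prescribed $\gamma < 1$ for $A$ large and $\kappa$ small, giving~\eqref{eq:lyapunov-condition}.

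The analytic input is a density estimate for~$g$. Since $\partial_z g(z, \eta_0) = \varphi'(z) - \varphi'(z - \eta_0)$ has magnitude at most $\|\varphi''\|_\infty |\eta_0|$ and is generically of that order, the push-forward of $\Unif([0, 2\pi])$ under $z \mapsto g(z, \eta_0)$ admits a density bounded by $C/|\eta_0|$ on a support of length $O(|\eta_0|)$, except near the finitely many critical points of $\partial_z g$ where the density exhibits square-root singularities. Assumptions (A1)--(A2) provide the non-degeneracy required to keep these singularities integrable against $r^{-p-1}\,dr$, so that $\int |y|^{-p} f_g(y)\, dy \leq C |\eta_0|^{-p}$ for $p$ sufficiently small; this is the source of the restriction on $p$ in the statement.

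Applied to the vertical step conditionally on $\eta_1^{(1)}$, a layer-cake computation yields $\E[|\eta_2^{(2)}|^{-p} \mid \eta_1^{(1)}] \leq C M^{-p} |\eta_1^{(1)}|^{-p}$, reducing the task to showing $\E |\eta_1^{(1)}|^{-p} \leq C |\eta|_\infty^{-p}$. I would split on the threshold $|\eta_1|/(KM)$ for a sufficiently large constant $K$. In the range $|\eta_2| \geq |\eta_1|/(KM)$, the density estimate applied to the horizontal step gives $\E |\eta_1^{(1)}|^{-p} \leq C(M|\eta_2|)^{-p} \leq C K^p |\eta|_\infty^{-p}$. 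In the range $|\eta_2| < |\eta_1|/(KM)$, the deterministic bound $|M g(z_h, \eta_2)| \leq M\|\varphi'\|_\infty |\eta_2| \leq |\eta_1|/2$ (on choosing $K \geq 2\|\varphi'\|_\infty$) forces $|\eta_1^{(1)}| \geq |\eta|_\infty/2$ almost surely. Combining the two steps gives $\E V(\eta^{(2)}) \leq C_0 M^{-p} V(\eta)$.

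The main obstacle is making the density estimate on $g(z, \eta_0)$ uniform in both $\eta_0$ and $\kappa$: while the bulk bound $1/|\partial_z g| \leq C/|\eta_0|$ is routine, a careful local analysis using (A1)--(A2) is required near the zeros of $\partial_z g$ to control the singularities they create. A secondary technical point is the reduction to a single random phase: the actual shear is a superposition of a bounded (at most three) number of independently shifted copies of $\varphi_\kappa$, so $Mg(z_h, \eta_2)$ must be replaced by $M \sum_i g(z_h^{(i)}, \eta_2)$ with independent $z_h^{(i)}$. Since the sup norm of a density only decreases under convolution, this substitution is harmless.
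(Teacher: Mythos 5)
Your horizontal-step analysis (the split at the threshold $\abs{\eta_1}/(KM)$, with the weighted density estimate for $g(\cdot,\eta_2)$ in the small-separation regime) is sound in outline and parallels what the paper does, but the composition with the vertical step contains a genuine gap. The density estimate you invoke -- density of order $1/\abs{\eta_0}$ on a support of length $O(\abs{\eta_0})$, with square-root singularities controlled by (A1)--(A2) -- is a \emph{small-separation} statement: it comes from $g(z,\eta_0)\approx \eta_0\varphi'(z)$ and is only available for $\abs{\eta_0}\le s_*$. After the horizontal shear, however, the rescaled separation $\eta_1^{(1)}$ is typically of size $M\abs{\eta_2}$, which is large (up to $A N_\kappa s_*$, and on the torus up to $\pi N_\kappa$) for all but a thin slab of the near-diagonal region; no choice of the $\kappa$-independent constant $s_*$ avoids this, since $M=AN_\kappa\to\infty$. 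In that regime your claim (1), $\E[\abs{\eta_2^{(2)}}^{-p}\mid \eta_1^{(1)}]\le C M^{-p}\abs{\eta_1^{(1)}}^{-p}$, is false: torus distances are bounded by $\pi$, so the left-hand side is always at least $(\pi N_\kappa)^{-p}$, whereas for $\abs{\eta_1^{(1)}}$ of order $N_\kappa$ the right-hand side is of order $(A N_\kappa^2)^{-p}$. Hence the reduction ``it suffices to bound $\E\abs{\eta_1^{(1)}}^{-p}$'' does not go through, and the advertised target $\E\abs{\eta^{(2)}}_\infty^{-p}\le C_0 M^{-p}\abs{\eta}_\infty^{-p}$ is stronger than what the two-shear step delivers. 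The fix is to split on the realized horizontal separation: when $\abs{\eta_1^{(1)}}\ge 2\abs{\eta}_\infty$, do not estimate the vertical step probabilistically at all, but use the deterministic fact that the vertical shear leaves the horizontal separation unchanged, so $\abs{\eta^{(2)}}_\infty\ge\abs{\eta_1^{(1)}}$ and $V$ already contracts by the fixed factor $2^{-p}$; the complementary ``no expansion'' event is then controlled by your density estimate in its valid regime, paying the crude worst-case bound~\eqref{e:Vstupid} there. This is precisely how the paper argues (its events $E$, $G_1$, $G_2$), and it is why the paper only claims contraction by a fixed $\gamma<1$ per step rather than a gain of $M^{-p}$.

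Two secondary points. First, the justification ``the sup norm of a density only decreases under convolution'' is not quite right: an individual translated copy of $\varphi_\kappa$ misses a given point with positive probability, so single summands have an atom at $0$ and need not possess densities, and the densities that do exist are unbounded (square-root singularities). What you actually need, and what survives conditioning on the other summands, is the shift-uniform bound $\sup_a \E\abs{a+g(z,\eta_0)}^{-p}\le C\abs{\eta_0}^{-p}$ for one copy that is guaranteed to cover the point. Second, if you do repair the argument along the lines above, it essentially collapses onto the paper's proof, so the route is not genuinely different once corrected.
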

\begin{proof}%[Proof of Lemma~\ref{l:lyapunov}]
  For~$0 \leq t \leq 1$, the velocity~$u^\kappa$ is a horizontal shear, and so
  \begin{equation}
    (\Phi^\kappa_t)^{-1}(x) = (x_1 - t \psi^\kappa_1(x_2), x_2)
  \end{equation}
  where
  \begin{equation}\label{e:psiDef}
    \psi^\kappa_1(x_2) \defeq A\paren[\Big]{\sum_{i=0}^{2N_\kappa-1} \varphi_\kappa \paren[\Big]{x_2-\alpha_i^{0}}}
    \,,
  \end{equation}
  and~$\alpha_i^0$ are as in~\eqref{eq:ukdef}.
  Hence
  \begin{equation}
    \norm{\grad (\Phi^\kappa_1)^{-1}}_{L^\infty}
      \leq 1 + \norm{\psi^\kappa_1}_{C^1}
      \leq 1 + 3 A N_\kappa \norm{\varphi}_{C^1}
      %\leq 1 + \frac{A N_\kappa}{2}
      \,.
  \end{equation}
  The second inequality above is true because for any~$x_2$, at most~$3$ terms in~\eqref{e:psiDef} are nonzero.
  Repeating this once more we obtain
\begin{equation}\label{e:distancelb}
  (10 AN_\kappa \norm{\varphi}_{C^1} )^{-2} \abs{x-y}_\infty \leq \abs{\Phi^\kappa_2(x) - \Phi^\kappa_2(y)}_\infty, \quad \forall (x,y)\in \T^{2, (2)}
\end{equation}
provided $\kappa$ is sufficiently small.
  Combined with~\eqref{e:lyapunov} this implies
  \begin{equation}\label{e:Vstupid}
    V( \Phi^\kappa_2(x), \Phi^\kappa_2(y) )
      \leq C \tilde A_\kappa^{2p} V(x, y)
      \,,
      \quad\text{where}\quad
      \tilde A_\kappa \defeq A N_\kappa
      \,.
  \end{equation}

  Let $s_*$ be a small positive number, that will be chosen later.
  Fix $(x,y)\in \Delta(s_*/N_\kappa)$, and split the proof into the following cases.

\restartcases
\case[$|x_2 - y_2| \geq |x_1 - y_1|$]
Assume that $0 < x_2 < y_2 < \frac{\pi}{N_\kappa}$.
  The other case with $0 < x_2 < \frac{\pi}{N_\kappa} < y_2 < \frac{2\pi}{N_\kappa}$ is similar, the only difference being that we need to use one more independent phase shift.

  We first let $E$ be the event that $\abs{(\Phi^\kappa_1(x) - \Phi^\kappa_1(y)) \cdot e_1} < 2\abs{x_2-y_2}$. We use triangle inequality and divide by $\tilde A_\kappa \abs{x_2-y_2}$ to see that
\begin{equation}
E \subset F\defeq \set[\Bigg]{\abs{f(\zeta_1)+f(\zeta_2)+f(\zeta_3)} < \frac{3}{\tilde A_\kappa}}\,,
\end{equation}
where
  \begin{equation}\label{e:fDef}
    f(s) = \frac{1}{\abs{Nx_2-Ny_2}}(\varphi'*\one_{[-Ny_2, -Nx_2]})(-s) \in C_c^\infty(N_\kappa x_2-2\pi, N_\kappa y_2)\,,
\end{equation}
and~$\zeta_1$, $\zeta_2$, $\zeta_3$ are independent random variables with distribution
  \begin{equation}\label{e:zetaDist}
    \zeta_1 \sim \Unif([-2\pi, -\pi])\,,
    \quad
    \zeta_2 \sim \Unif([-\pi, 0])\,,
    \quad\text{and}\quad
    \zeta_3 \sim \Unif([0, \pi])
    \,.
  \end{equation}

To estimate the probability of the event $F$, define 
\begin{equation}
g(\zeta_1, \zeta_2, \zeta_3) = f(\zeta_1) + f(\zeta_2) + f(\zeta_3)\,.
\end{equation}
  Notice that on the region $F_1=\set{\abs{f'(\zeta_2)} > \paren{3/\tilde A_\kappa}^{1/2}}$, $\partial_{\zeta_2} g = f'(\zeta_2) \neq 0$.
  Thus by the implicit function theorem, there exists a differentiable map $\tilde f$, defined locally on a neighborhood of $(\zeta_1, \zeta_3)$, such that $g(\xi_1, \tilde f (\xi_1, \xi_3), \xi_3)=0$.
  On the surface $\Gamma \defeq \set{ \xi_2 = \tilde f(\xi_1, \xi_3)}$,  we note
\begin{equation}
\norm{\grad g (\xi_1, \xi_2, \xi_3)} = \norm{(f'(\xi_1), f'(\xi_2), f'(\xi_2))} \geq \abs{f'(\xi_2)}> \sqrt{\frac{3}{\tilde A_\kappa}}\,,
\end{equation}
which implies
  \begin{equation}\label{e:PF1}
  \P[F \cap F_1] \leq  \P\bigg[\dist ((\zeta_1, \zeta_2, \zeta_3), \Gamma) \leq \sqrt{\frac{3}{\tilde A_\kappa}}\bigg] \leq \frac{C}{\sqrt{\tilde A_\kappa}}\,.
\end{equation}

  Also, Assumption~(A\ref{i:second-non-degenerate}) implies that for small enough $s_*>0$, 
\begin{equation}\label{e:doublefmin}
\min_{\set{z\st f'(z)=0}} \abs{f''(z)} \geq \frac{1}{2}\min_{\set{z\st \varphi''(z)=0}} \abs{\varphi'''(z)} > 0\,,    
\end{equation}
so that
  \begin{equation}\label{e:PF2}
  \P[F\cap F_1^c] \leq \P[F_1^c] \leq \frac{C}{\sqrt{\tilde A_\kappa}}
  \,.
\end{equation}
Combining~\eqref{e:PF1} and~\eqref{e:PF2} implies
\begin{equation}\label{e:PF}
  \P [F ]\leq \frac{C}{\sqrt{\tilde A_\kappa}}
  \,.
\end{equation}

Consequently,
\begin{align}
\E[V(\Phi^\kappa_2(x), \Phi^\kappa_2(y))] &= \E[V(\Phi^\kappa_2(x), \Phi^\kappa_2(y))(\one_E + \one_{E^c})] \\
&\leq \E[V(\Phi^\kappa_2(x), \Phi^\kappa_2(y))\one_{F}] + 2^{-p} V(x,y)\\
  &\overset{\mathclap{\eqref{e:Vstupid}, \eqref{e:PF}}}{\leq}
    \quad (C\tilde A_\kappa^{2p-\frac{1}{2}} + 2^{-p})V(x,y)\,.
\end{align}
Setting $p\in (0, \frac{1}{4})$ and ensuring~$\kappa$ is small enough proves~\eqref{eq:lyapunov-condition} in this case.

\case[$|x_2 - y_2| < |x_1 - y_1|$]
Again, without loss of generality we may assume that $0< x_2 < y_2 < \frac{\pi}{N_\kappa}$, as we can handle the other case by using one more independent phase shift.

We let $E$ be the event
\begin{equation}
  E \defeq \set[\bigg]{
    \abs{ (\Phi^\kappa_1(x) - \Phi^\kappa_1(y)) \cdot e_1} < \frac{\abs{x_1-y_1}}{\sqrt{\tilde A_\kappa}}
  }
  \,.
\end{equation}
Dividing by $\tilde A_\kappa \abs{x_1-y_1}$ we see~$E \subseteq F$, where
\begin{equation}
  F\defeq \set[\bigg]{ \abs[\Big]{ \frac{1}{\tilde A_\kappa} + c(f(\zeta_1)+f(\zeta_2)+f(\zeta_3))} <
    \frac{1}{\tilde A_\kappa^{3/2} }
  }\,,
  \quad\text{where}\quad
  c \defeq  \frac{\abs{x_2-y_2}}{\abs{x_1-y_1}} < 1
  \,,
\end{equation}
and~$\zeta_1$, $\zeta_2$, $\zeta_3$ are independent uniform random variables distributed according to~\eqref{e:zetaDist}, and~$f$ is the same function in~\eqref{e:fDef}.

Notice that when
\begin{equation}\label{e:cSmall}
  c < \frac{\tilde A_\kappa^{-1}-\tilde A_\kappa^{-\frac{3}{2}}}{3\norm{\varphi'}_\infty}
\end{equation}
we have
\begin{equation}
     \abs[\Big]{ \frac{1}{\tilde A_\kappa} + c(f(\zeta_1)+f(\zeta_2)+f(\zeta_3))}\geq \tilde A_\kappa^{-\frac{3}{2}}\,,
\end{equation}
and hence $\P[F]=0$.
Thus, we may assume~\eqref{e:cSmall} does not hold, and consider the event $F_2 \defeq \set{\abs{f'(\zeta_2)} > \tilde A_\kappa^{-\frac{1}{3}}}$.
Using the same argument as we used to obtain~\eqref{e:PF1} and~\eqref{e:PF2} we obtain
\begin{equation}
    \P[F\cap F_2^c] \leq C\tilde A_\kappa^{-\frac{1}{3}}\,,
    \qquad
    \P[F \cap F_2] \leq \frac{C \tilde A_\kappa^{-\frac{7}{6}} }{c} \leq C\tilde A_\kappa^{-\frac{1}{6}}\,,
\end{equation}
and hence
\begin{equation}\label{e:PEBound}
  \P[E] \leq \P [F] \leq C\tilde A_\kappa^{-\frac{1}{6}}\,.
\end{equation}

Now consider the events 
\begin{align}
  \label{e:G1} G_1 &= \set{\abs{ (\Phi^\kappa_1(x) - \Phi^\kappa_1(y)) \cdot e_1} < \eta\abs{x_1-y_1}}\,,\\
  \label{e:G2} G_2 &= \set{\abs{ (\Phi^\kappa_2(x) - \Phi^\kappa_2(y)) \cdot e_2} < 5\tilde A_\kappa^\frac{1}{2}\abs{ (\Phi^\kappa_1(x) - \Phi^\kappa_1(y)) \cdot e_1}}\,,
\end{align}
for some large $\eta>0$.
On the event $G_1$, provided that $s_*$ is small enough,  there exists~$i \in \N$ such that either
\begin{equation}
  \frac{\pi i}{N_\kappa} < \Phi^\kappa_1(x) \cdot e_1 < \Phi^\kappa_1(y) \cdot e_1  < \frac{\pi(i+1)}{N_\kappa}
\end{equation}
 or
 \begin{equation}
   \frac{\pi i}{N_\kappa} < \Phi^\kappa_1(x) \cdot e_1 < \frac{\pi(i+1)}{N_\kappa} < \Phi^\kappa_1(y) \cdot e_1  < \frac{\pi(i+2)}{N_\kappa}
   \,.
 \end{equation}
  Without loss of generality, we can assume the first case with $i=0$.
  (The second case can be handled similarly by considering one additional,  independent phase shift.)
As before, we use the triangle inequality and divide by~$\tilde A_\kappa \abs{ (\Phi^\kappa_1(x) - \Phi^\kappa_1(y)) \cdot e_1}$ to obtain
\begin{equation}
    E^c \cap G_1 \cap G_2 \subset  \set[\Big]{\abs{h(\zeta_1)+h(\zeta_2)+h(\zeta_3)} < 6\tilde A_\kappa^{-\frac{1}{2}}}\,,
\end{equation}
where
\begin{equation}
  h(s) \defeq \frac{1}{\abs{N_\kappa(\Phi^\kappa_1(x) - \Phi^\kappa_1(y)) \cdot e_1}}(\varphi'*\one_{[-N_\kappa\Phi^\kappa_1(y) \cdot e_1, -N_\kappa\Phi^\kappa_1(x) \cdot e_1]})(-s)\,,
\end{equation}
and~$\zeta_1$, $\zeta_2$, $\zeta_3$ are independent random variables distributed according to~\eqref{e:zetaDist}.
By considering events $\set{\abs{h'(\zeta_2)}> \tilde A_\kappa^{-\frac{1}{3}}}$ and its complement, we get
\begin{equation}\label{e:PEcG1G2}
    \P[E^c \cap G_1 \cap G_2] \leq C\tilde A_\kappa^{-\frac{1}{6}}\,.
\end{equation}

Using~\eqref{e:PEBound}, \eqref{e:G1}, \eqref{e:G2} and~\eqref{e:PEcG1G2} we see
\begin{align}
\E[V(\Phi^\kappa_2&(x), \Phi^\kappa_2(y))] \\
  &=\E[V(\Phi^\kappa_2(x), \Phi^\kappa_2(y))(\one_E +  \one_{E^c\cap G_1 \cap G_2} + \one_{E^c\cap G_1^c} + \one_{E^c\cap G_1 \cap G_2^c} )] \\
&\leq (C\tilde A_\kappa^{2p-\frac{1}{6}} + C\tilde A_\kappa^{2p-\frac{1}{6}} + \eta^{-p} + 5^{-p})V(x,y)\,.
\end{align}
Choosing $p\in (0, \frac{1}{12})$, $\eta$ large, and $\kappa$ small proves~\eqref{eq:lyapunov-condition} and concludes the proof.
\end{proof}

\subsection{Verifying the Harris conditions (Lemma \ref{l:harrisConditions})}
To prove Lemma~\ref{l:harrisConditions}, we first note that the near-diagonal drift condition (Lemma~\ref{l:lyapunov}) implies the drift condition~\eqref{e:lyapunovFoster} everywhere.
To complete the proof of Lemma~\ref{l:harrisConditions}, we need to prove that the sublevel set $S \coloneq \{V \leq C K_\kappa\}$ is small for a large enough constant $C > 0$.
By construction, elements of~$S$ are~$O(1/N_\kappa^3)$ away from the diagonal in~$\T^{2, (2)}$.
But for most pairs of nearby points, the map~$\Phi_1$ expands distances by a factor of~$N_\kappa$.
Thus, after two iterations, elements of~$S$ will move to a distance of~$1/N_\kappa$ away from the diagonal (Lemma~\ref{l:push-to-off-diagonal}).

When two points in~$\T^{2, (2)}$ are distance at least~$1/N_\kappa$ apart, there is a positive $\kappa$-independent probability event that the two points are hit by independent shears.
When the amplitude of the shear is large enough, the law of the points after one more iteration dominates a small multiple of Lebesgue measure on~$\T^{2,(2)}$ away from the diagonal.

\begin{proof}[Proof of Lemma~\ref{l:harrisConditions}]
Let $\gamma, p$, and $s_*$ be the constants as in Lemma~\ref{l:lyapunov}. Then, for some $\kappa$-independent constant $K_1' > 0$ we have
  \begin{equation}
    V( \Phi^\kappa_2(x), \Phi^\kappa_2(y)) \overset{\eqref{e:Vstupid}}{\leq} K_1' N_\kappa^{3p}
    \quad\text{for}\quad
    (x,y)\in \Delta \paren[\Big]{\frac{s_*}{N_\kappa}}^c\,.
  \end{equation}
Combining this with~\eqref{eq:lyapunov-condition} , we have
\begin{equation}
  P^{\kappa, (2)}V \leq \gamma V + K_\kappa' 
  \,,
  \quad\text{where}\quad \label{e:Kk'def}
  K_\kappa' \defeq K_1' N_\kappa^{3p}
  \,.
\end{equation}
This implies for any $l\geq 0$,
\begin{equation}
  P^{\kappa,(2)}_{l} V \leq \gamma^l V +  K_\kappa' \frac{1-\gamma^l}{1-\gamma}\,.
\end{equation}
  Choosing~$l = 3$, this proves~\eqref{e:lyapunovFoster} with~$\gamma_1 = \gamma^3$, and~$K_1 = K_1' (1 - \gamma^3)/(1 - \gamma)$.

  The rest of this proof is devoted to checking the small set condition~\eqref{eq:smallset}.
  For this, we will find
  \begin{equation}\label{e:Rkappa}
    R_\kappa>\frac{2K_\kappa'}{1-\gamma}
    =\frac{2 K_\kappa}{1-\gamma_1}
    \,.
  \end{equation}
  and show that $\set{V \leq R_\kappa}$ is indeed a small set for $P^{\kappa,(2)}_3$ as in~\eqref{eq:smallset}.

  Notice that for any small~$\kappa$-independent constant~$\eta > 0$, and~$R_\kappa$ defined by
  \begin{equation}
    R_\kappa \defeq \frac{N_\kappa^{3p}}{\eta^p}
      = \paren[\Big]{\frac{1 - \gamma}{2\eta^p K_1'}} \frac{2 K'_\kappa }{1 - \gamma}
    \,,
  \end{equation}
  $R_\kappa$ satisfies the condition~\eqref{e:Rkappa} and we have
  \begin{equation}
    S \defeq \set{V \leq R_\kappa}
      = \set[\Big]{(x,y)\in \T^{2, (2)}\st \abs{x-y}_\infty \geq \frac{\eta}{N_\kappa^3}}
    \,.
  \end{equation}

Now, we define
\begin{align}
    \label{e:s1def}
    S_1 &\defeq \set{(x,y)\in S \st \abs{x_2-y_2}\geq \frac{\eta}{N_\kappa^3}}\,,\\
    \label{e:s2def}
    S_2 &\defeq S - S_1 = \set{(x,y)\in S \st \abs{x_1-y_1} \geq \frac{\eta}{N_\kappa^3} > \abs{x_2-y_2}}\,,\\
    \label{e:s3def}
    S_3 &\defeq \set{(x,y)\in S \st \abs{x_2-y_2}\geq \frac{\eta}{N_\kappa}}\,.
\end{align}

Then we prove
\begin{align}
\label{e:oneshift} \inf_{(x,y)\in S_3}
\Law ((\Phi^\kappa_1(x) \cdot e_1, \Phi^\kappa_1(y) \cdot e_1)) &\geq C(\eta, \varphi) \Leb_{\T^2}\\
\label{e:infS1} c_1\defeq \inf_{z\in S_1} P^{\kappa,(2)}(z, S_3) &> 0\,,\\
\label{e:infS2} c_2\defeq \inf_{z\in S_2} P_2^{\kappa,(2)}(z, S_3) &> 0\,.
\end{align}

Indeed, suppose that we proved~\eqref{e:oneshift}--\eqref{e:infS2}. 
Then, we can first prove that  
\begin{equation}\label{e:infS3}
     \inf_{z\in S_3} P^{\kappa,(2)}(z, \cdot) \geq C(\eta)\nu_\kappa(\cdot)\,, \\
\end{equation}
where
\begin{equation}
    \nu_\kappa \defeq \Leb\bigr|_{\Delta_{1}(\frac{\eta}{N_\kappa})^c}\,, \quad\text{and}\quad 
    \Delta_1(a) \defeq \set{(w, \tilde w) \in \T^{2, (2)}\st \abs{w_1 - \tilde w_1} < a}\,.
\end{equation}
This is because for any open sets $U_1, U_2, V_1, V_2 \subseteq \T^1$, if we let
\begin{align}
  E_1 &\defeq \set{\Phi^\kappa_2(x) \cdot e_2 \in U_2} \cap \set{\Phi^\kappa_2(y) \cdot e_2)\in V_2}\,,\\
    E_2 &\defeq \set{\Phi^\kappa_1(x) \cdot e_1 \in U_1} \cap \set{\Phi^\kappa_1(y) \cdot e_1 \in V_1}\,,\\
    E_3 &\defeq \set[\Big]{\abs{ (\Phi^\kappa_1(x) - \Phi^\kappa_1(y)) \cdot e_1}>\frac{\eta}{N_\kappa}}\,,
\end{align}
then on the event $E_3$, the vertical shear case is similar to~\eqref{e:oneshift} so that
\begin{align}
    \P[E_1\cap E_2] &\geq \P[E_1 \st E_2\cap E_3]\P[E_2\cap E_3]
    \geq C(\eta) \abs{U_2 \times V_2} \abs{W}%\Leb_{\T^2}(V \cap \Delta_{\T^2}(\frac{\eta}{N_\kappa})^c) \\
    \,,
\end{align} 
where
\begin{equation}
  W \defeq \set[\Big]{ (w_1, \tilde w_1) \in U_1 \times V_1 \st \abs{w_1-\tilde w_1}>\frac{\eta}{N_\kappa}}\,.
\end{equation}
This implies
\begin{equation}
  \P [E_1 \cap E_2] \geq C(\eta) \abs[\Big]{ U_1 \times U_2 \times V_1 \times V_2 - \Delta_1\paren[\Big]{\frac{\eta}{N_\kappa}} }
  \,,
\end{equation}
which implies~\eqref{e:infS3} as claimed.

Now, to prove~\eqref{eq:smallset}, we notice that $S_3 \subset S_1$ and hence for any $z\in S_1$, 
\begin{equation}
    P_2^{\kappa,(2)}(z, S_3) \geq \int_{S_3} P^{\kappa,(2)}(y, S_3) P^{\kappa,(2)}(z, dy) \overset{\eqref{e:infS1}}{\geq} c_1^2\,.
\end{equation}
Combining this with~\eqref{e:infS2}, we get
\begin{equation}
    \inf_{z\in S} P_2^{\kappa,(2)}(z, S_3) \geq \min\set{c_1^2, c_2}\,,
\end{equation}
and using~\eqref{e:infS3} yields that for any $z\in S$, and~$B \subseteq \T^{2, (2)}$ we have
\begin{equation}
    P_3^{\kappa, (2)}(z, B) \geq \int_{S_3} P_2^{\kappa, (2)}(z, dy) P^{\kappa, (2)}_1(y, B)  \geq C(c_1,c_2,\eta)\nu_\kappa(B)\,,
\end{equation}
as desired.

It remains to prove~\eqref{e:oneshift}--\eqref{e:infS2}, which we do in Lemmas~\ref{l:s3-small}, \ref{l:push-to-off-diagonal} and~\ref{l:push-to-off-diagonal2}, below.
\end{proof}

To prove~\eqref{e:oneshift}--\eqref{e:infS2}, we need an elementary fact about the pushforward of a uniform random variable by the function~$\varphi$.
\begin{lemma}\label{l:density-lb}
  For all sufficiently small $\eta>0$, and $a \in [0, 2\pi -\frac{\eta}{2}]$, we have
    \begin{equation}
        \Law (\varphi(U_{a, \eta})) \geq C(\varphi')\Leb|_{[\varphi(a), \varphi(a)+C(\eta, \varphi'')]}\,,
    \end{equation}
    for some $\kappa$-independent constants $C(\varphi')$ and $C(\eta, \varphi'')$.
    Here $U_{a, \eta}\sim \Unif([a, a+\frac{\eta}{2}])$.
\end{lemma}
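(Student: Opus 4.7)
The strategy is a one-dimensional change-of-variables argument. For each $a \in [0, 2\pi - \eta/2]$, I would identify a subinterval $J \subseteq [a, a+\eta/2]$ of length comparable to $\eta$ on which $\varphi$ is monotone with $|\varphi'|$ bounded below by a positive constant. Then for $X \sim \Unif([a, a+\eta/2])$ and any Borel set $B \subseteq \varphi(J)$, the change-of-variables formula gives
\[
\P(\varphi(X) \in B) = \frac{2}{\eta}\int_B \frac{dy}{|\varphi'(\varphi^{-1}(y))|} \geq \frac{2}{\eta \norm{\varphi'}_\infty} |B|.
\]
For all sufficiently small $\eta > 0$, the prefactor $2/(\eta \norm{\varphi'}_\infty)$ exceeds any fixed constant $C(\varphi')$, which gives the desired density lower bound on the interval $\varphi(J)$.

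The main technical step is producing $J$ uniformly in $a$, which is where Assumption~(A\ref{i:first-non-degenerate}) enters. Since $Z = \set{\varphi' = 0}$ is finite and $|\varphi''| \geq c_\varphi > 0$ on $Z$, a Taylor expansion yields a radius $\delta_0 > 0$ so that $|\varphi'(x)| \geq \tfrac{c_\varphi}{2} \dist(x, Z)$ whenever $\dist(x, Z) \leq \delta_0$, while off the $\delta_0$-neighborhood of $Z$, $|\varphi'|$ is bounded below by a positive constant depending only on $\varphi$. For $\eta$ small enough, $[a, a+\eta/2]$ contains at most one point of $Z$, and a pigeonhole argument produces a subinterval $J \subseteq [a, a+\eta/2]$ of length $\eta/8$ at distance at least $\eta/8$ from $Z$. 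On such a $J$ we have $|\varphi'| \geq c_\varphi \eta / 16$ and a definite sign for $\varphi'$, so $\varphi|_J$ is a diffeomorphism onto an interval of length at least $c\eta^2$, supplying the constant $C(\eta, \varphi'')$.

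The remaining subtlety is to arrange that $\varphi(J)$ has $\varphi(a)$ as an endpoint (interpreting the interval $[\varphi(a), \varphi(a) + C(\eta, \varphi'')]$ as oriented in either direction, as dictated by the sign of $\varphi'$ on $J$). When $[a, a+\eta/2]$ misses $Z$ entirely, or when the unique critical point $z_0$ in $[a, a+\eta/2]$ lies at distance at least $\eta/8$ from $a$, one takes $J$ to be the component of $[a, a+\eta/2] \setminus (z_0 - \eta/8, z_0 + \eta/8)$ containing $a$; then $\varphi(a)$ is automatically an endpoint of $\varphi(J)$. The genuinely delicate case is when $a$ lies within $\eta/8$ of a critical point $z_0$, so that $\varphi(a)$ is at most $O(\eta^2)$ away from the local extremum $\varphi(z_0)$; this $O(\eta^2)$ discrepancy can be absorbed into the constant $C(\eta, \varphi'')$ by shrinking it slightly. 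The main obstacle is tracking this endpoint alignment uniformly in $a$, but the quantitative bound $|\varphi''| \geq c_\varphi$ on $Z$ from Assumption~(A\ref{i:first-non-degenerate}) keeps every estimate independent of $a$.
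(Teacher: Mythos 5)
Your route is recognizably close to the paper's, but more elaborate, and the extra structure is where it breaks. The paper does not need a monotone window at all: it observes that \emph{every} value in the chord between $\varphi(a)$ and $\varphi(a+\tfrac\eta2)$ is attained by $\varphi$ somewhere in $[a,a+\tfrac\eta2]$, and at any preimage $\abs{\varphi'}\le\norm{\varphi'}_\infty$, so the pushforward density is at least $1/\norm{\varphi'}_\infty$ a.e.\ on that chord (critical preimages only increase the density); Assumption~(A\ref{i:first-non-degenerate}) plus Taylor is then used only to bound $\abs{\varphi(a+\tfrac\eta2)-\varphi(a)}$ below, uniformly in $a$, by a constant of order $\eta^2$. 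Your non-delicate cases (where $a\in J$ and $\varphi$ is monotone on $J$) do reproduce this conclusion, modulo the minor point that the distance to $Z$ must be measured to \emph{all} of $Z$, including critical points just outside $[a,a+\tfrac\eta2]$, or else the claimed bound $\abs{\varphi'}\ge c_\varphi\eta/16$ on $J$ can fail.

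The genuine gap is in your ``delicate case'', and shrinking $C(\eta,\varphi'')$ does not repair it. Take a local minimum $z_0$ with $\varphi''(z_0)>0$ and $a=z_0$. Your window is $J=[z_0+\tfrac\eta8,\,a+\tfrac\eta2]$, so $\varphi(J)=[\varphi(z_0+\tfrac\eta8),\,\varphi(a+\tfrac\eta2)]$, whose left endpoint exceeds $\varphi(a)=\varphi(z_0)$ by roughly $\tfrac12\varphi''(z_0)(\eta/8)^2$ --- the \emph{same} order $\eta^2$ as the length of $\varphi(J)$ itself. The lemma pins the endpoint of the target interval at $\varphi(a)$ and (in this configuration) the only viable orientation is upward, so for every choice of $C(\eta,\varphi'')>0$ the sliver between $\varphi(a)$ and $\varphi(z_0+\tfrac\eta8)$ meets the target interval in a set of positive measure (indeed contains it entirely once $C(\eta,\varphi'')$ is smaller than the sliver), and on that sliver your argument, which only controls the density on $\varphi(J)$, gives no lower bound at all. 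Making the constant smaller only pushes the target interval deeper into the uncovered sliver; an $O(\eta^2)$ discrepancy adjacent to the \emph{pinned} endpoint cannot be absorbed into a constant multiplying the \emph{length}. The fix is exactly the paper's simplification: drop the excision of the critical point from the density estimate (the change-of-variables bound $2/(\eta\norm{\varphi'}_\infty)$ holds a.e.\ on the whole image of $[a,a+\tfrac\eta2]$, since a vanishing $\varphi'$ at some preimage can only help), and use your window $J$ --- or the paper's Taylor argument with (A\ref{i:first-non-degenerate}) --- solely to certify that the image extends a distance of order $\eta^2$, with constant depending on $\min_{Z}\abs{\varphi''}$, to one side of $\varphi(a)$.
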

\begin{proof}
    Assumption~(A\ref{i:first-non-degenerate}) implies that for sufficiently small $\eta>0$,
    \begin{equation}
        \frac{d}{d\Leb}\Law (\varphi(U_{a, \eta})) \geq \frac{1}{\norm{\varphi'}_\infty}\one_{[\varphi(a), \varphi(a+\frac{\eta}{2})]}\,.
    \end{equation}

    By Taylor's theorem we know
    \begin{equation}
      \abs[\Big]{
	\varphi\paren[\Big]{ a + \frac{\eta}{2} } - \varphi(a)
      }
      \geq \max\set[\Big]{
	\frac{\abs{\varphi'(a)} \eta}{4},~
	\frac{\abs{\varphi''(a)} \eta^2 }{16} - \frac{\abs{\varphi'(a)} \eta }{2}
      }\,.
    \end{equation}
    By Assumption~(A\ref{i:first-non-degenerate}),
    \begin{equation}
      \inf_{a \in [0, 2\pi - \frac{\eta}{2}]}
	\max\set[\Big]{
	  \frac{\abs{\varphi'(a)} \eta}{4},~
	  \frac{\abs{\varphi''(a)} \eta^2 }{16} - \frac{\abs{\varphi'(a)} \eta }{2}
	}
	= C(\eta) > 0
	\,,
    \end{equation}
    concluding the proof.
\end{proof}

We now prove~\eqref{e:oneshift} holds.
\begin{lemma}\label{l:s3-small}
    Let $S_3$ be defined as in~\eqref{e:s3def}.
    Then \eqref{e:oneshift} holds for all sufficiently small $\eta>0$.
    %\begin{equation}\label{e:s3-small-dummy}
    %    \inf_{(x,y)\in S_3}
    %    \Law ((\Phi^\kappa_1(x) \cdot e_1, \Phi^\kappa_1(y) \cdot e_1)) \geq C(\eta, \varphi) \Leb_{\T^2}\,.
    %\end{equation}
\end{lemma}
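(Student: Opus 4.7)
The plan is to note that $\Phi^\kappa_1$ is a pure horizontal shear during $[0,1)$, so $(\Phi^\kappa_1(x) \cdot e_1, \Phi^\kappa_1(y) \cdot e_1) = (x_1 + \psi^\kappa_1(x_2), y_1 + \psi^\kappa_1(y_2))$. Since the claim is translation invariant in $(x_1, y_1)$, it suffices to bound the joint law of $(\psi^\kappa_1(x_2), \psi^\kappa_1(y_2))$ modulo $2\pi$ from below by a $\kappa$-independent multiple of $\Leb_{\T^2}$, uniformly over $(x_2, y_2)$ with $|x_2 - y_2| \geq \eta/N_\kappa$. My strategy is to isolate, with $\kappa$-uniform positive probability, two distinct shifts $\alpha_{i^*}$ and $\alpha_{k^*}$ such that $\alpha_{i^*}$ contributes only to $\psi^\kappa_1(x_2)$ and $\alpha_{k^*}$ only to $\psi^\kappa_1(y_2)$. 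After conditioning on all the other shifts and on this good event, the two coordinates become independent functions of independent uniforms, and Lemma~\ref{l:density-lb} applies to each factor.

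To locate these shifts I would rescale, setting $u_j = N_\kappa \alpha_j \sim \Unif([\pi j, \pi(j+1)])$, $\xi = N_\kappa x_2$, $\xi' = N_\kappa y_2$, and assume without loss of generality $\xi < \xi'$ with $\xi' - \xi \geq \eta$. Because $\varphi$ is supported in $[0, 2\pi]$, $u_j$ contributes to $\psi^\kappa_1(x_2)$ iff $u_j \in [\xi - 2\pi, \xi]$, and similarly for $y_2$. The ``affects $x_2$ only'' and ``affects $y_2$ only'' ranges are then the disjoint intervals $[\xi - 2\pi, \min(\xi, \xi' - 2\pi))$ and $(\max(\xi, \xi' - 2\pi), \xi']$, each of length $\min(\xi' - \xi, 2\pi) \geq \eta$, and each meets at most three of the blocks $[\pi j, \pi(j+1)]$. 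By pigeonhole there exist distinct indices $i^*, k^*$ such that the overlap of $[\pi i^*, \pi(i^* + 1)]$ with the ``$x_2$ only'' range and of $[\pi k^*, \pi(k^* + 1)]$ with the ``$y_2$ only'' range each have length at least $\eta/3$. Choosing sub-intervals $\tilde J_{i^*}, \tilde J_{k^*}$ of these overlaps of length $\eta/6$, the event $\mathcal E = \{u_{i^*} \in \tilde J_{i^*}\} \cap \{u_{k^*} \in \tilde J_{k^*}\}$ has $\P[\mathcal E] \geq (\eta/(6\pi))^2$, uniform in $\kappa$.

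On $\mathcal E$, conditional on $(u_j)_{j \notin \{i^*, k^*\}}$, the contributions from the remaining shifts are fixed constants, so
\[
  \psi^\kappa_1(x_2) = C_x + A\varphi(\xi - u_{i^*}), \qquad \psi^\kappa_1(y_2) = C_y + A\varphi(\xi' - u_{k^*}),
\]
with $u_{i^*}$ and $u_{k^*}$ independent uniforms on $\tilde J_{i^*}, \tilde J_{k^*}$. Applying Lemma~\ref{l:density-lb} with parameter $\eta/3$ shows that $\varphi(\xi - u_{i^*})$ has density at least $C(\varphi')$ on an interval of length $C(\eta, \varphi'')$, so $A\varphi(\xi - u_{i^*})$ has density at least $C(\varphi')/A$ on an interval of length $A \cdot C(\eta, \varphi'')$. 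Choosing $A$ large enough, depending only on $\eta$ and $\varphi$, so that $A \cdot C(\eta, \varphi'') \geq 2\pi$, folding modulo $2\pi$ yields a density at least $C(\varphi')/A$ on all of $\T^1$; translation by the constant $C_x$ preserves this bound. The same holds for the $y_2$ factor, and by conditional independence the joint conditional density on $\T^2$ is at least $(C(\varphi')/A)^2$. Integrating over the conditioning and multiplying by $\P[\mathcal E]$ gives the $\kappa$-independent lower bound~\eqref{e:oneshift}.

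The main obstacle I expect is the case analysis in the second step, namely ensuring that distinct indices $i^* \neq k^*$ and sub-intervals $\tilde J_{i^*}, \tilde J_{k^*}$ of length $\eta/6$ can always be located, in every configuration of $(x_2, y_2)$ with $|x_2 - y_2| \geq \eta/N_\kappa$. The tightest cases are when $x_2$ and $y_2$ lie in the same or adjacent blocks $[\pi j/N_\kappa, \pi(j+1)/N_\kappa]$, where the favourable ranges are spread over only a few $u_j$-blocks and one must distinguish shifts sitting at the left versus right edge of their natural ranges to simultaneously secure $i^* \neq k^*$ and the pigeonhole overlap bound $\eta/3$. Once the pair is isolated, the remaining folding argument is routine and only requires $A$ to be chosen sufficiently large, as allowed by Lemma~\ref{l:kindd}.
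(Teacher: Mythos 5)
Your proposal is correct and follows essentially the same route as the paper's proof: a conditioning event of $\kappa$-uniformly positive probability that forces two distinct phase shifts to act on $x_2$ and $y_2$ separately, followed by Lemma~\ref{l:density-lb} and the large-amplitude folding modulo $2\pi$, which is exactly the mechanism of the paper's explicit three-case analysis (same, adjacent, and distant blocks). The one point you flagged---securing $i^*\neq k^*$---does resolve: when the two ``only'' ranges have length $L<\pi$ they are separated by a gap of $2\pi-L>\pi$ and so cannot meet a common block of length $\pi$, while if $L\geq\pi$ each range contains a full block and the disjointness of the two ranges forces those blocks to be distinct.
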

    
\begin{proof}
We fix $(x,y) \in S_3$ and split the proof into three cases.
\restartcases
\case[$0< x_2 < y_2 < \frac{\pi}{N_\kappa}$]
  There are at most three terms in the sum in~\eqref{eq:ukdef} which may not vanish at both~$x_2$ and~$y_2$.
  These terms are the ones with index~$i = 2N_\kappa - 2$, $i = 2N_\kappa -1$, and~$i = 0$.
  For simplicity of notation, we will use~$\alpha_{-2}$, $\alpha_{-1}$ to denote~$\alpha_{2N_\kappa - 2}\in [-\frac{2\pi}{N_\kappa}, -\frac{\pi}{N_\kappa}]$ and~$\alpha_{2N_\kappa - 1}\in [-\frac{\pi}{N_\kappa}, 0]$ respectively.
%Both $x_2$ and $y_2$ can be hit by three independent horizontal shears with phase shifts $\alpha_{-2} \in , \alpha_{-1}$, and $\alpha_0 \in [0, \frac{\pi}{N_\kappa}]$.

  On the event $E=\set{\alpha_{-2}\in [x_2-\frac{2\pi}{N_\kappa}, y_2-\frac{2\pi}{N_\kappa}]}\cap \set{\alpha_{0}\in [x_2, y_2]}$, only terms in the sum in~\eqref{eq:ukdef} with index~$2N_\kappa - 2$ and~$2N_\kappa -1$ may not vanish at~$x_2$,
  and only terms in the sum in~\eqref{eq:ukdef} with index~$2N_\kappa - 1$ and~$0$ may not vanish at~$y_2$.
  Thus, after fixing $\alpha_{-1}$, the random variables $\Phi^\kappa_1(x) \cdot e_1$ and $\Phi^\kappa_1(y) \cdot e_1$ satisfy 
\begin{equation}
\Law ((\Phi^\kappa_1(x) \cdot e_1, \Phi^\kappa_1(y) \cdot e_1)\st E, \alpha_{-1}) = 
\begin{pmatrix}
x_1+\varphi_\kappa(x_2-\alpha_{-1}) \\
y_1+\varphi_\kappa(y_2-\alpha_{-1})
\end{pmatrix}
+
\begin{pmatrix}
\Law (A\varphi(U))\\
\Law (A\varphi(V))
\end{pmatrix}\,,
\end{equation}
where 
$U, V$ are independent random variables with distributions
\begin{equation}
 U\sim \Unif([2\pi-N_\kappa(y_2-x_2), 2\pi])\,, \quad V\sim \Unif([0, N_\kappa(y_2-x_2)])\,.   
\end{equation}
Also, 
\begin{equation}
    \P[E] = \paren[\Big]{\frac{N_\kappa(y_2-x_2)}{\pi}}^2 \geq \paren[\Big]{\frac{\eta}{\pi}}^2\,,
\end{equation}
and the supports of the distributions of $U$ and $V$ have Lebesgue measure of at least $\eta>0$. 

Then Lemma~\ref{l:density-lb} implies that for $A$ sufficiently large, independent of $x$ and $y$,
\begin{equation}\label{eq:firstsmall}
    \Law ((\Phi^\kappa_1(x) \cdot e_1, \Phi^\kappa_1(y) \cdot e_1)) \geq C(\eta, \varphi) \paren[\bigg]{\frac{\eta}{\pi}}^2 \Leb_{\T^2}\,.
\end{equation}

\case[$0 < x_2 < \frac{\pi}{N_\kappa} < y_2 < \frac{2\pi}{N_\kappa}$]
As in the first case, only the terms in the sum in~\eqref{eq:ukdef} with index $i=2N_\kappa-2, i=2N_\kappa-1$, and $i=0$ may not vanish at $x_2$ and only the terms with index $i=2N_\kappa-1, i=0$, and $i=1$ may not vanish at $y_2$. 
\smallskip

Define the events
  \begin{align}
    E_1 &\defeq \set[\Big]{\alpha_{-1}\in \brak[\Big]{-\frac{\pi}{N_\kappa}, y_2-\frac{2\pi}{N_\kappa}}}\cap \set[\Big]{\alpha_1 \in \brak[\Big]{\frac{\pi}{N_\kappa}, y_2}}
    \,,
    \\
    E_2 &\defeq \set[\Big]{\alpha_{-2}\in \brak[\Big]{x_2-\frac{2\pi}{N_\kappa}, -\frac{\pi}{N_\kappa}}}\cap \set[\Big]{\alpha_0 \in \brak[\Big]{x_2, \frac{\pi}{N_\kappa}}}
    \,.
  \end{align}
On the event~$E_1$, only the terms in the sum in~\eqref{eq:ukdef} with index $i=0$ and $i=1$ may not vanish at $y_2$. Thus, 
\begin{align}
\Law ((\Phi^\kappa_1(x)_1, \Phi^\kappa_1(y)_1)&\st E_1, \alpha_{-2}, \alpha_0) = \\
&\begin{pmatrix}
x_1+\varphi_\kappa(x_2-\alpha_{-2})+\varphi_\kappa(x_2-\alpha_0) \\
y_1+\varphi_\kappa(y_2-\alpha_0)
\end{pmatrix}
+
\begin{pmatrix}
\Law (A\varphi(U))\\
\Law (A\varphi(V))
\end{pmatrix}\,,
\end{align}
where 
$U, V$ are independent random variables with distribution
\begin{equation}
U\sim \Unif([2\pi-N_\kappa(y_2-x_2), \pi+N_\kappa x_2])\,, \quad V\sim \Unif([0, N_\kappa y_2-\pi])\,.
\end{equation}
 
We note that
\begin{align}
    \P[E_1] &= \paren[\Big]{\frac{N_\kappa y_2}{\pi}-1}^2\,, \\
    \P[E_2] &= \paren[\Big]{1-\frac{N_\kappa x_2}{\pi}}^2\,, \\
    \P[E_1] + \P[E_2] &\geq 2 \paren[\Big]{\frac{N_\kappa(y_2-x_2)}{2\pi}}^2 > 2\paren[\Big]{\frac{\eta}{2\pi}}^2\,.
\end{align}

Without loss of generality, if we assume that $\P[E_1] \geq \P[E_2]$, then $\P[E_1]\geq\paren{\frac{\eta}{2\pi}}^2$ so that the support of the distributions of $U$ and $V$ each has Lebesgue measure of at least $\frac{\eta}{2}$. Combining this with Lemma~\ref{l:density-lb} implies that if $A$ is sufficiently large, independent of $x$ and $y$, then 
\begin{align}\label{eq:secondsmall}
    \Law ((\Phi^\kappa_1(x) \cdot e_1, \Phi^\kappa_1(y) \cdot e_1)) &\geq \Law ((\Phi^\kappa_1(x) \cdot e_1, \Phi^\kappa_1(y) \cdot e_1)\st E_1)\P[E_1] \\
    &\geq C(\eta, \varphi) \paren[\Big]{\frac{\eta}{2\pi}}^2 \Leb_{\T^2}\,.
\end{align}

The case $\P[E_2] \geq \P[E_1]$ is similar and can be proved by using the event $E_2$ instead.

\case[$0 < x_2 < \frac{\pi}{N_\kappa} < \frac{2\pi}{N_\kappa} < y_2$]
In this case the only terms in the sum in~\eqref{eq:ukdef} that may not vanish at~$x_2$ are the ones with index $i=2N_\kappa-2$, $i=2N_\kappa-1$, and $i=0$.
Also, the only terms that may not vanish at $y_2$ are the ones with with index $i=j-1$, $i=j$, and $i=j+1$ where~$j = \floor{\frac{Ny}{\pi}}-1$.
%$1\leq j \leq 2N_\kappa-3$.
Since~$2 N_\kappa -1 \neq j$, we can write
%In particular,  $i=2N_\kappa-1$ vanishes at~$y_2$, and the term index $i=j$ vanishes at $x_2$.
%may not vanish only at $x_2$ and
%the term with index $i=j$ may not vanish only at $y_2$.
%This implies %shear $\alpha_{-1}$ can only hit $x_2$ and shear $\alpha_{i}$ can only hit $y_2$. 
\begin{multline}
\Law((\Phi^\kappa_1(x)_1, \Phi^\kappa_1(y)_1) \st \alpha_{-2}, \alpha_0, \alpha_{j-1}, \alpha_{j+1}) =\\ 
\begin{pmatrix}
x_1+\varphi_\kappa(x_2-\alpha_{-2}) + \varphi_\kappa(x_2-\alpha_{0}) \\
y_1+\varphi_\kappa(y_2-\alpha_{j-1}) +\varphi_\kappa(y_2-\alpha_{j+1})
\end{pmatrix}
+
\begin{pmatrix}
\Law (A\varphi(U))\\
\Law (A\varphi(V))
\end{pmatrix}\,,
\end{multline}
where $U, V$ are independent random variables such that
\begin{equation}
U\sim \Unif([N_\kappa x_2, \pi+N_\kappa x_2])\,, \quad V\sim \Unif([N_\kappa y_2 - \pi(j+1), N_\kappa y_2-\pi j])\,.   
\end{equation}
 %Again, $U$ and $V$ have supports of Lebesgue measure $\pi$ so that
 Hence for $A$ sufficiently large (independent of $x$ and $y$),
\begin{equation}\label{eq:thirdsmall}
    \Law ((\Phi^\kappa_1(x) \cdot e_1, \Phi^\kappa_1(y) \cdot e_1)) \geq C(\varphi) \Leb_{\T^2}\,.
\end{equation}

Thus, \eqref{eq:firstsmall}, \eqref{eq:secondsmall}, and~\eqref{eq:thirdsmall} show that~\eqref{e:oneshift} holds in each case, provided $\abs{x_2-y_2} > \frac{\eta}{N_\kappa}$.
This concludes the proof for~\eqref{e:oneshift}.
\end{proof}

\begin{lemma}\label{l:push-to-off-diagonal}
    Let $S_1, S_2$ and $S_3$ be defined as in~\eqref{e:s1def}--\eqref{e:s3def}.
    Then~\eqref{e:infS1} holds for all sufficiently small $\eta>0$.
\end{lemma}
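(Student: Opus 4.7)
The plan is as follows. For $(x,y) \in S_1$, set $\delta_y \defeq |x_2 - y_2| \geq \eta/N_\kappa^3$. We must show that a single step of $\Phi^\kappa_2$ (a horizontal shear followed by a vertical shear) sends $(x,y)$ into $S_3$ with $\kappa$-independent positive probability. The broad idea is that the horizontal shear stretches the $e_1$-separation by a factor of order $AN_\kappa$, and then the vertical shear stretches the $e_2$-separation by another factor of order $AN_\kappa$. Thus the total amplification is of order $A^2 N_\kappa^2$, so the final $e_2$-separation is at least of order $A^2 \eta/N_\kappa$, which exceeds $\eta/N_\kappa$ once $A$ is taken large enough independently of $\kappa$ and $\eta$.

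We split into two cases. \textbf{Case 1:} $\delta_y \geq \eta/N_\kappa$, so $(x,y) \in S_3$. Directly applying Lemma~\ref{l:s3-small} to the horizontal shear, the distribution of $(\Phi^\kappa_1(x) \cdot e_1, \Phi^\kappa_1(y) \cdot e_1)$ dominates $C\Leb_{\T^2}$, and hence the sub-event $|\Phi^\kappa_1(x)\cdot e_1 - \Phi^\kappa_1(y)\cdot e_1| \geq \eta/N_\kappa$ has $\kappa$-independent positive probability for $\eta$ small. On this sub-event, the analogous statement of Lemma~\ref{l:s3-small} applied to the vertical shear (with the roles of $e_1$ and $e_2$ interchanged) yields positive conditional probability that $|\Phi^\kappa_2(x)\cdot e_2 - \Phi^\kappa_2(y)\cdot e_2| \geq \eta/N_\kappa$.

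\textbf{Case 2:} $\eta/N_\kappa^3 \leq \delta_y < \eta/N_\kappa$. Here $x_2$ and $y_2$ lie in the same or adjacent shear intervals of length $O(1/N_\kappa)$, so the mean value theorem gives
\begin{equation*}
\Phi^\kappa_1(x)\cdot e_1 - \Phi^\kappa_1(y)\cdot e_1 = (x_1 - y_1) + A N_\kappa (x_2 - y_2)\, S(\alpha^0),
\end{equation*}
where $S(\alpha^0) = \sum_{i\in I} \varphi'\bigl(N_\kappa \xi_i(\alpha^0)\bigr)$ is a sum over the (at most three) relevant, independent shifts. An argument analogous to Lemma~\ref{l:density-lb}, using Assumption~(A\ref{i:first-non-degenerate}), shows that $S(\alpha^0)$ has a continuous distribution with density bounded below by a $\kappa$-independent constant on an interval of positive length. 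Consequently, independently of the value of $x_1 - y_1$, a $\kappa$-independent positive-probability event on $\alpha^0$ produces $|\Phi^\kappa_1(x)\cdot e_1 - \Phi^\kappa_1(y)\cdot e_1| \geq c_1 A N_\kappa \delta_y \geq c_1 A \eta/N_\kappa^2$---either $|x_1 - y_1|$ is already of comparable or larger order and we use the triangle inequality, or else it is smaller and we exploit the non-atomicity of $S$ to avoid cancellation. Since $c_1 A \eta/N_\kappa^2 \ll 1/N_\kappa$ for small $\kappa$, the vertical shear acting on $(\Phi^\kappa_1(x), \Phi^\kappa_1(y))$ is likewise in the linear regime, and the same mean value plus density argument using the independent shifts $\alpha^1$ produces a further positive-probability event on which
\begin{equation*}
|\Phi^\kappa_2(x)\cdot e_2 - \Phi^\kappa_2(y)\cdot e_2 - (x_2 - y_2)| \geq c_1 c_2 A^2 \eta/N_\kappa.
\end{equation*}
Choosing $A$ large enough that $c_1 c_2 A^2 \geq 2$ and absorbing the $|x_2 - y_2| \leq \eta/N_\kappa$ term then gives $|\Phi^\kappa_2(x)\cdot e_2 - \Phi^\kappa_2(y)\cdot e_2| \geq \eta/N_\kappa$.

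The main obstacle is handling the cancellation between the shear kicks and the pre-existing separations in the linear regime of Case~2. This is the reason for insisting on positive-density (non-atomic) distributions for the shear kicks, provided by Lemma~\ref{l:density-lb} and Assumption~(A\ref{i:first-non-degenerate}): these guarantee a $\kappa$-independent positive-probability event on which the kick lands in a favorable range, no matter where the initial separation $x_1 - y_1$ (resp.\ $x_2 - y_2$) happens to lie.
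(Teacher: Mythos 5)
Your overall strategy (stretch by the horizontal shear, then by the vertical one, using anticoncentration of the phase-shift sums near the diagonal and Lemma~\ref{l:s3-small} for well-separated points) is the same as the paper's, and your Case~1 matches the paper's treatment of points already in $S_3$. The gap is in Case~2, at the sentence ``Since $c_1 A\eta/N_\kappa^2 \ll 1/N_\kappa$ for small $\kappa$, the vertical shear \dots is likewise in the linear regime.'' The quantity $c_1 A\eta/N_\kappa^2$ is only a \emph{lower} bound for $|(\Phi^\kappa_1(x)-\Phi^\kappa_1(y))\cdot e_1|$ on your good event; nothing in your argument bounds this separation from above. For $(x,y)\in S_1$ the horizontal gap $|x_1-y_1|$ is unrestricted (it can be of order one), and even when it is tiny, with $\delta_y$ near the top of your range $\eta/N_\kappa$ the first shear itself contributes up to about $3\|\varphi'\|_\infty A N_\kappa\delta_y \approx A\eta$, again of order one. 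Once the intermediate separation exceeds $\pi/N_\kappa$, the two points are no longer acted on by a common triple of vertical phase shifts, so the mean-value representation $A N_\kappa(\text{separation})\,S'(\alpha^1)$ breaks down (indeed the claimed lower bound $c_2 A N_\kappa$ times an order-one separation would exceed the diameter of the torus), and the displayed inequality $|\Phi^\kappa_2(x)\cdot e_2 - \Phi^\kappa_2(y)\cdot e_2 - (x_2-y_2)| \geq c_1 c_2 A^2 \eta/N_\kappa$ is unjustified in exactly the regime your case split does not exclude.

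The repair is the extra case analysis the paper carries out with an intermediate scale ($\epsilon=1/A^2$ there): (i) when $\delta_y > \epsilon/N_\kappa^2$, or when $|x_1-y_1| \geq 4A\epsilon/N_\kappa$, one only shows that the first step produces a horizontal separation of size at least order $\epsilon/N_\kappa$ and then concludes with the vertical analogue of~\eqref{e:oneshift} (Lebesgue-measure domination), which requires no linear structure and no upper bound on the separation; (ii) the two-step linear/anticoncentration argument is run only when $\delta_y < \epsilon/N_\kappa^2$ \emph{and} $|x_1-y_1| < 4A\epsilon/N_\kappa$, in which case the triangle inequality supplies the missing \emph{upper} bound $|(\Phi^\kappa_1(x)-\Phi^\kappa_1(y))\cdot e_1| \leq 7A\epsilon/N_\kappa < \pi/N_\kappa$, so the vertical shear genuinely acts through a common set of shifts. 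Two smaller points: the non-constancy of the averaged derivative underlying your density lower bound for $S(\alpha^0)$ comes from Assumption~(A\ref{i:second-non-degenerate}) rather than (A\ref{i:first-non-degenerate}) (Lemma~\ref{l:density-lb} concerns $\varphi$, not $\varphi'$), and the paper's small-ball estimates as in~\eqref{e:PF1}--\eqref{e:PF} would serve in place of that density claim; also your conditional use of the vertical analogue of Lemma~\ref{l:s3-small} in Case~1 should be stated as conditioning on the step-one randomness, as in the paper's Case~3.
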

\begin{proof}
Fix $(x,y) \in S_1$ and let $\epsilon>0$ be a small $\kappa$-independent constant that will be chosen later. 

\restartcases
\case[$\frac{\epsilon}{N_\kappa^2} < \abs{x_2-y_2} < \frac{\eta}{N_\kappa}$]\label{c:bigx2-y2}
Without loss of generality, we assume that $0 < x_2 < y_2 < \frac{\pi}{N_\kappa}$.
We define the event 
\begin{equation}
    F\defeq \set[\Big]{\abs{(\Phi_1^\kappa(x) - \Phi_1^\kappa(y))\cdot e_1} < N_\kappa\abs{x_2-y_2}}\,.
\end{equation}
Dividing the inequality by $AN_\kappa\abs{x_2-y_2}$ yields 
\begin{equation}
  F \subset F_1 \defeq \set[\bigg]{ \abs[\Big]{ \frac{x_1-y_1}{AN_\kappa \abs{x_2-y_2}} + (f(\zeta_1)+f(\zeta_2)+f(\zeta_3))} <
    \frac{1}{A}
  }\,,
\end{equation}
where $f$ and $\zeta_i$'s are defined as in~\eqref{e:fDef} and~\eqref{e:zetaDist}, respectively. 

For sufficiently small $\eta$>0, \eqref{e:doublefmin} holds and the arguments similar to those used to derive~\eqref{e:PF} yield 
\begin{equation}
    \P[F_1] \leq CA^{-\frac{1}{2}}\,.
\end{equation}
Thus we can ensure $\P[F^c] \geq \frac{1}{2}$ for all sufficiently large $A$.

We notice that on the event $F^c$, 
\begin{equation}
    \abs{(\Phi_1^\kappa(x) - \Phi_1^\kappa(y))\cdot e_1} > N_\kappa\abs{x_2-y_2} > \frac{\epsilon}{N_\kappa}\,.
\end{equation}
Combining this with the symmetric vertical-shear version of~\eqref{e:oneshift}, we get
\begin{equation}
  P^{\kappa, (2)}((x,y), S_3) \geq \frac{1}{2} C(\epsilon, \varphi)\Leb(S_3)\,.
\end{equation}
Setting $\kappa$ small enough yields~\eqref{e:infS1}.

\case[$\frac{\eta}{N_\kappa^3} < \abs{x_2-y_2} < \frac{\epsilon}{N_\kappa^2}$]
Again, without loss of generality, we assume that $0 < x_2 < y_2 < \frac{\pi}{N_\kappa}$.
Suppose $\abs{x_1-y_1} \geq \frac{4A\epsilon}{N_\kappa}$. Then by triangle inequality,
\begin{equation}
    \abs{(\Phi_1^\kappa(x)-\Phi_1^\kappa(y))\cdot e_1} \geq \frac{4A\epsilon}{N_\kappa}- 3AN_\kappa\abs{x_2-y_2} \geq \frac{A\epsilon}{N_\kappa}\,.  
\end{equation}
Then for sufficiently small $A\epsilon$, the vertical-shear case symmetric to~\eqref{e:oneshift} implies
\begin{equation}
  P^{\kappa, (2)}((x,y), S_3) \geq C(A\epsilon, \varphi)\Leb(S_3)\,.
\end{equation}

For the other case $\abs{x_1-y_1} < \frac{4A\epsilon}{N_\kappa}$, we see from triangle inequality that
\begin{equation}
    \abs{(\Phi_1^\kappa(x)-\Phi_1^\kappa(y))\cdot e_1} \leq \frac{4A\epsilon}{N_\kappa} + 3AN_\kappa\abs{x_2-y_2} \leq \frac{7A\epsilon}{N_\kappa}\,.
\end{equation}
Define
\begin{align}
    F_1 &=\set{\abs{(\Phi_1^\kappa(x)-\Phi_1^\kappa(y))\cdot e_1} < N_\kappa \abs{x_2-y_2}}\,, \\
    F_2 &=\set{\abs{(\Phi_2^\kappa(x)-\Phi_2^\kappa(y))\cdot e_2} < N_\kappa \abs{(\Phi_1^\kappa(x)-\Phi_1^\kappa(y))\cdot e_1}}\,.
\end{align}
As in the previous case, we can prove that when $A \epsilon$ is sufficiently small we have
\begin{align}
    \P[F_1^c] \geq \frac{1}{2} \quad\text{and}\quad \P[F_2^c \st F_1^c] \geq \frac{1}{2}\,.
\end{align}
We notice that on the event $F_1^c \cap F_2^c$,
\begin{equation}
    \abs{(\Phi_2^\kappa(x)-\Phi_2^\kappa(y))\cdot e_2} > N_\kappa \abs{(\Phi_1^\kappa(x)-\Phi_1^\kappa(y))\cdot e_1} 
    > N_\kappa^2 \abs{x_2-y_2} > \frac{\eta}{N_\kappa}\,,
\end{equation}
and hence 
\begin{equation}
    P^{\kappa, (2)}((x,y),S_3)
      \geq \P [F_1^c \cap F_2^c ]
      \geq \frac{1}{4}\,.
\end{equation}

\case[$\abs{x_2-y_2} > \frac{\eta}{N_\kappa}$]
We define the event 
\begin{equation}
    G \defeq
      \set[\Big]{\abs{(\Phi_1^\kappa(x)-\Phi_1^\kappa(y))\cdot e_1} \geq  \frac{\eta}{N_\kappa}}
      = \set[\Big]{(\Phi_1^\kappa(x), \Phi_1^\kappa(y)) \notin \Delta_1\paren[\Big]{\frac{\eta}{N_\kappa}} }
      \,,
\end{equation}
and use~\eqref{e:oneshift} to see that
\begin{equation}
  \P[G] \geq \delta \defeq C(\eta, \varphi)\Leb\paren[\Big]{\Delta_1\paren[\Big]{\frac{\eta}{N_\kappa}}^c }\,.
\end{equation}
Conditioned on the event $G$, we use the vertical-shear case symmetric to~\eqref{e:oneshift} to see that
\begin{equation}
    \P[(\Phi_2^{\kappa}(x), \Phi_2^{\kappa}(y))\in S_3] \geq \delta \P[G]  \geq \delta^2\,. 
 \end{equation}
In conclusion, setting $\epsilon=1/A^2$, $A$ large, and $\kappa$ small enough yields~\eqref{e:infS1}.
\end{proof}

\begin{lemma}\label{l:push-to-off-diagonal2}
    Let $S_1, S_2$ and $S_3$ be defined as in~\eqref{e:s1def}--\eqref{e:s3def}.
    Then~\eqref{e:infS2} holds for all sufficiently small $\eta>0$.
\end{lemma}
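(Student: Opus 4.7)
The plan is to reduce to the case already handled by Lemma~\ref{l:push-to-off-diagonal}: I aim to show that one application of $P^{\kappa,(2)}$ starting from any point of $S_2$ lands in $S_1$ with probability bounded below independently of $\kappa$, after which the Markov property together with Lemma~\ref{l:push-to-off-diagonal} will close the argument.

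Fix $(x,y)\in S_2$, so $\abs{x_1-y_1}\geq \eta/N_\kappa^3$ while $\abs{x_2-y_2}<\eta/N_\kappa^3$. The horizontal shear over $[0,1)$ preserves vertical separation and produces horizontal separation
\[
\Delta_1 \;=\; \abs{(x_1-y_1)+(s_h(x_2)-s_h(y_2))},
\]
where $s_h$ denotes the random horizontal profile from~\eqref{eq:ukdef}. Since $\abs{s_h(x_2)-s_h(y_2)}\leq 3A\norm{\varphi'}_\infty N_\kappa\abs{x_2-y_2}\leq 3A\norm{\varphi'}_\infty \eta/N_\kappa^2$, whenever $\abs{x_1-y_1}$ dominates this bound one has $\Delta_1\geq \eta/(2N_\kappa^3)$ deterministically. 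In the complementary regime (where both separations are of comparable size) the implicit function theorem / tubular neighborhood argument used in Lemma~\ref{l:lyapunov} Case 2 (exploiting Assumption~(A\ref{i:second-non-degenerate})) shows that the bad event $\{\Delta_1<\eta/(2N_\kappa^3)\}$ has probability at most $1/2$ once $A$ is large. Conditional on $\Delta_1\geq \eta/(2N_\kappa^3)$, the subsequent vertical shear typically contributes $s_v(\Phi_1^\kappa(x)\cdot e_1)-s_v(\Phi_1^\kappa(y)\cdot e_1)\sim A N_\kappa\Delta_1\gtrsim A\eta/N_\kappa^2$ to the vertical separation; another application of the same non-degeneracy argument, with the roles of the two coordinates swapped, shows that with probability bounded below uniformly in $\kappa$ the vertical separation at time~$2$ is at least $\eta/N_\kappa^3$. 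Thus $(\Phi_2^\kappa x,\Phi_2^\kappa y)\in S_1$ with some $\kappa$-independent probability $c_3>0$.

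Combining this with Lemma~\ref{l:push-to-off-diagonal} via the Markov property gives
\[
P_2^{\kappa,(2)}((x,y),S_3)
\;\geq\; \int_{S_1} P^{\kappa,(2)}((x,y),dz)\,P^{\kappa,(2)}(z,S_3)
\;\geq\; c_3\, c_1 \;>\;0,
\]
which is~\eqref{e:infS2}. The main technical obstacle will be the first positive-probability bound on $\Delta_1$: it requires running the non-degeneracy argument of Lemma~\ref{l:lyapunov} in a new parameter regime where the ratio $\abs{x_2-y_2}/\abs{x_1-y_1}$ can be arbitrarily small, while simultaneously tracking the geometry relative to the coarse grid of width $\pi/N_\kappa$ so that the correct triples of independent phase shifts act on $x_2$ and $y_2$; the split into subcases according to which grid cells contain $x_2$ and $y_2$ will mirror the trichotomy used in Lemma~\ref{l:s3-small}.
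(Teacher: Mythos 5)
You take a genuinely different route from the paper---forcing the pair from $S_2$ into $S_1$ in a single step and then composing with Lemma~\ref{l:push-to-off-diagonal}---and your final Markov-property composition and step count are correct, but the key intermediate claim is not justified as written. In the ``comparable size'' regime, say $\abs{x_1-y_1}\leq 6A\norm{\varphi'}_\infty N_\kappa\abs{x_2-y_2}$, normalizing by $AN_\kappa\abs{x_2-y_2}$ as in Lemma~\ref{l:lyapunov} turns your bad event $\{\delta_1<\eta/(2N_\kappa^3)\}$, where $\delta_1\defeq\abs{(\Phi^\kappa_1(x)-\Phi^\kappa_1(y))\cdot e_1}$, into $\{\abs{c_0+f(\zeta_1)+f(\zeta_2)+f(\zeta_3)}<\tau\}$ with $c_0=\abs{x_1-y_1}/(AN_\kappa\abs{x_2-y_2})$ and $\tau=(\eta/(2N_\kappa^3))/(AN_\kappa\abs{x_2-y_2})$. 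At the lower edge of this regime $AN_\kappa\abs{x_2-y_2}$ is of order $\abs{x_1-y_1}/\norm{\varphi'}_\infty\geq\eta/(6\norm{\varphi'}_\infty N_\kappa^3)$, so $\tau$ is of order $\norm{\varphi'}_\infty$, comparable to the full range of $f(\zeta_1)+f(\zeta_2)+f(\zeta_3)$; the tubular-neighborhood/non-degeneracy estimate only yields a bound of order $\sqrt{\tau}$, which is vacuous here, and since $A$ cancels from both $c_0$ and $\tau$, taking $A$ large cannot help. Assumptions~(A\ref{i:first-non-degenerate})--(A\ref{i:second-non-degenerate}) do not preclude this event having probability above $1/2$ for some admissible profile and some such $(x,y)$. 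The step is repairable: as in Case~1 of Lemma~\ref{l:push-to-off-diagonal}, aim only for $\delta_1\geq N_\kappa\abs{x_2-y_2}$ (probability at least $1-CA^{-1/2}$), which still suffices for your second stage because then $AN_\kappa\delta_1\geq\eta/(6\norm{\varphi'}_\infty N_\kappa^2)$, so producing vertical separation $2\eta/N_\kappa^3$ only requires the fresh vertical-shear sum to exceed a threshold of order $1/N_\kappa$, an event of probability $1-CN_\kappa^{-1/2}$; you would also still need the same/adjacent/far-cell trichotomy for the time-$1$ horizontal positions, so in effect you re-prove a rescaled, coordinate-swapped Lemma~\ref{l:push-to-off-diagonal}.

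For contrast, the paper sidesteps exactly this delicate regime and never shows that one step sends $S_2$ into $S_1$ with uniform probability. It applies Chebyshev's inequality to the already-established drift bound~\eqref{e:Kk'def}, obtaining that with probability at least $1-\gamma-K_1'\eta^p$ the time-$2$ pair has $\abs{\cdot}_\infty$-separation exceeding $\eta/N_\kappa^3$; since that separation may be carried only by the horizontal coordinate (so the chain may land back in $S_2$ rather than $S_1$), it then splits into the events $E_1\setminus E_2$ and $E_2$ and concludes using~\eqref{e:infS1}, its coordinate-swapped repetition, and the swapped version of~\eqref{e:oneshift}. Recycling the Lyapunov estimate in this way is what keeps the paper's proof short; your route can be made to work with the modification above, at the cost of redoing the oscillation estimates.
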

\begin{proof}
We fix $(x,y) \in S_2$.
By~\eqref{e:Kk'def}, we have that
\begin{align}
  \P[\abs{\Phi^\kappa_2(x)-\Phi^\kappa_2(y)}_\infty \leq \frac{\eta}{N_\kappa^3}] &= \P\brak[\Big]{V(\Phi^\kappa_2(x),\Phi^\kappa_2(y)) \geq \paren[\Big]{\frac{\eta}{N_\kappa^3}}^{-p}}\\
    &\leq \paren[\Big]{\frac{\eta}{N_\kappa^3}}^p(\gamma V(x,y) + K_\kappa')\\
    &\leq \gamma + K_1'\eta^p\,,  
\end{align}
Thus, for all sufficiently small $\eta>0$,
\begin{align}\label{e:e1+e2}
  \P\brak[\Big]{\abs{\Phi^\kappa_2(x)-\Phi^\kappa_2(y)}_\infty > \frac{\eta}{N_\kappa^3}} \geq C_\gamma > 0\,.
\end{align}

Now, we let
\begin{align}
    E &\defeq \set{\abs{(\Phi^\kappa_4(x)-\Phi^\kappa_4(y)) \cdot e_2} > \frac{\eta}{N_\kappa}}\,,\\
  E_1 &\defeq \set{\abs{(\Phi^\kappa_2(x)-\Phi^\kappa_2(y)) \cdot e_2} > \frac{\eta}{N_\kappa^3}}\,,\\
  E_2 &\defeq \set{\abs{(\Phi^\kappa_1(x)-\Phi^\kappa_1(y))\cdot e_1} > \frac{\eta}{N_\kappa^3}}\,,\\
  E_3 &\defeq \set{\abs{(\Phi^\kappa_3(x)-\Phi^\kappa_3(y))\cdot e_1} > \frac{\eta}{N_\kappa}}\,.
\end{align}
Conditioned on $E_1-E_2$, using~\eqref{e:infS1} yields
\begin{equation}\label{e:e1-e2}
    \P[E\st E_1-E_2] \geq c_1 \,.
\end{equation}

Repeating the argument used to derive~\eqref{e:infS1} yields
\begin{equation}\label{e:e2condition}
    \P[E_3\st E_2] \geq c_1\,,
\end{equation}
and using the vertical shear case symmetric to~\eqref{e:oneshift} yields
\begin{equation}\label{e:e3condition}
    \P[E \st E_2\cap E_3] \geq C(\eta, \varphi)\Leb(S_3)\,.
\end{equation}

Combining~\eqref{e:e1-e2}, \eqref{e:e2condition}, \eqref{e:e3condition}, and~\eqref{e:e1+e2}, we have that for some small $\kappa$-independent constant $c'$ and sufficiently small $\kappa$,
\begin{align}
    \P[E] &\geq \P[E\st E_1-E_2]\P[E_1-E_2] + \P[E\st E_2 \cap E_3] \P[E_3\st E_2] \P[E_2]\\
    &\geq c_1 \P[E_1-E_2]+ c'\P[E_2]\\
    &\geq \min\set{c_1, c'} \P[E_1 \cup E_2]\\
    &\geq C(\eta, \gamma)\,.
\end{align}
This concludes the proof for~\eqref{e:infS2}.
\end{proof}

\subsection{Completing the proof of exponential mixing (Lemma \ref{l:kindd})}\label{s:proofHarris}
We will now use Lemma~\ref{l:harrisConditions} to prove Lemma~\ref{l:kindd}.
The proof follows established methods with only minor changes due to the~$\kappa$ dependent constant in~\eqref{e:lyapunovFoster}.
For brevity, we follow the proof of Lemma~3.3 in~\cite{CoopermanIyerEA25} and only point out where changes need to be made.

\begin{proof}[Proof of Lemma~\ref{l:kindd}]
  %The proof is similar to the proof in~\cite{CoopermanIyerEA25}, except in our situation we need to track the~$\kappa$ dependence of the constants.
  %First we note that our Foster--Lyapunov drift condition~\eqref{e:lyapunovFoster} now has an additive constant that depends on~$\kappa$.
  Following the proof of Lemma~3.2 in~\cite{CoopermanIyerEA25}, will give the~$\kappa$-dependent geometric ergodicity bound
  \begin{equation}
    \norm[\Big]{P^{\kappa,(2)}_{n}f - \int f d\pi^{(2)}}_{\beta_\kappa} \leq C \exp(-\gamma n)\norm[\Big]{f - \int f d\pi^{(2)}}_{\beta_\kappa}
  \,,
  \end{equation}
  for any test function~$f$.
  Here $\beta_\kappa$ and~$\norm{\cdot}_{\beta_\kappa}$ are defined by
  \begin{equation}
    \beta_\kappa \defeq \frac{\alpha \kappa^{3p}}{2 K_1}
    \,,
    \quad\text{and}\quad
      \norm{f}_{\beta_\kappa} \defeq \sup_x \frac{\abs{f(x)}}{1+\beta_\kappa V(x)}\,,
  \end{equation}
  where~$K_1$ and~$\alpha$ are the constants from Lemma~\ref{l:harrisConditions}.

  Following the proof of Lemma~3.3 in~\cite{CoopermanIyerEA25}, we note that the dependence on~$\kappa$ only enters in the inequality immediately preceding (8.1), through
  \begin{equation}\label{e:emKappa}
    \norm{e_{m'}^{(2)}}_{\beta_\kappa}\,,
    \quad\text{and}\quad
    \int (1+\beta_\kappa V)d\pi^{(2)}
    \,.
  \end{equation}
  Here $e_m^{(2)}:\T^{2, (2)} \to \C$ is the eigenfunction defined by
  \begin{equation}
    e_m^{(2)}(x, y) = e^{2\pi i m \cdot (x - y)}
    \,.
  \end{equation}
  We note that both terms in~\eqref{e:emKappa} satisfy the~$\kappa$-independent bound
  \begin{equation}
    \norm{e^{(2)}_m}_{\beta_\kappa}
      \leq \norm{e^{(2)}_m}_{L^\infty} = 1 \quad\text{and}\quad \int (1+\beta_\kappa V)d\pi^{(2)} \leq 2\,,
  \end{equation}
  for sufficiently small $\kappa$.
  Hence the proof of Lemma~3.3 in~\cite{CoopermanIyerEA25} will go through unchanged for all sufficiently small~$\kappa$ and yield~\eqref{e:ukappa-mixrate-kappa} and~\eqref{e:Dbound}.
\end{proof}

\section{Lower bounds on the dissipation time (Proposition \ref{p:tdis-lower})}\label{s:slowm}

We will now prove a lower bound on the dissipation time when the advecting velocity field is small in~$W^{-1, \infty}$, as stated in Proposition~\ref{p:tdis-lower}.
The proof is a direct argument based on energy methods. We note a somewhat similar argument is made using an SDE focused approach in~\cite{ColomboCrippaEA22} exploiting the ``It\^o-Tanaka trick''. 
\begin{proof}[Proof of Proposition~\ref{p:tdis-lower}]
  Since~\eqref{e:w-1inftybound} assumes a uniform in time~$L^\infty$ bound on~$H^\kappa$, it is enough to show~\eqref{e:tdisLower} holds for~$s = 0$.
Define $\theta^\kappa$ as the solution to the advection-diffusion equation~\eqref{e:ad} and $\varphi^\kappa$ as the solution to the heat equation 
\begin{equation}\label{e:heat}
    \partial_t \varphi^\kappa - \kappa\Delta \varphi^\kappa = 0\,.
\end{equation}
  Using~\eqref{e:w-1inftybound} and the fact that~$H$ is skew-symmetric, we note
\begin{equation}
 u^\kappa \cdot \nabla \theta^\kappa = \nabla \cdot (H^\kappa \cdot \nabla \theta^\kappa)\,. 
\end{equation}

Let $w = \theta^\kappa - \varphi^\kappa$.
Subtracting equations~\eqref{e:ad} and~\eqref{e:heat} gives
\begin{equation}
    \partial_t w + \nabla \cdot (H^\kappa \cdot \nabla w) + \nabla \cdot (H^\kappa \cdot \nabla \varphi^\kappa) = \kappa \Delta w\,.
\end{equation}
Multiplying by~$w$ and integrating gives
\begin{align}
    \frac{1}{2}\partial_t \norm{w}_{L^2}^2 &\leq -\kappa\norm{\nabla w}_{L^2}^2 + \norm{H^\kappa}_{L^\infty}\norm{\nabla w}_{L^2}\norm{\nabla \varphi^\kappa}_{L^2}\\
    &\leq -\frac{\kappa}{2}\norm{\nabla w}_{L^2}^2 + \frac{1}{2\kappa}\norm{H^\kappa}_{L^\infty}^2\norm{\nabla \varphi^\kappa}_{L^2}^2\\
    \label{e:wdiff}&\leq \frac{1}{2} C_0^2 \kappa\norm{\nabla \varphi^\kappa_0}_{L^2}^2\,.
\end{align}

Now we choose
\begin{equation}
  \varphi^\kappa_0 (x) = \theta^\kappa_0(x) = e_1\,,
\end{equation}
  where $e_1 = \sin( x_1 )$ is the first eigenfunction of $-\Delta$ on $\T^d$ corresponding to the eigenvalue~$\lambda_1 = 1$.  We note $w_0 =0$ and
\begin{equation}
  \norm{w_t}_{L^2}^2 \overset{\eqref{e:wdiff}}{\leq} C_0^2 \norm{\grad e_1}_{L^2}^2 \kappa t
  = C_0^2 \iffalse \lambda_1 \fi \kappa t
  \,.
\end{equation}
  Thus, if~$C_1$ is defined by~\eqref{e:tdisLower} and~$t \leq C_1 / \kappa$, then
\begin{equation}
    \norm{\varphi^\kappa_t}_{L^2}^2 = e^{-2\kappa \iffalse\lambda_1\fi t} \geq \frac{3}{4}\,,
    \quad\text{and}\quad
    \norm{w_t}_{L^2}^2 \leq \frac{1}{8}\,.
\end{equation}
Using the triangle inequality,
\begin{equation}
    \norm{\theta^\kappa_t}_{L^2}^2 \geq \frac{1}{2}\norm{\varphi^\kappa_t}_{L^2}^2 - \norm{w_t}_{L^2}^2 \geq \frac{1}{4}\,,
\end{equation}
  which forces~$\tdis^0( u^\kappa, \kappa ) \geq t$.
  This proves~\eqref{e:tdisLower} concluding the proof.
\end{proof}

\section{The dissipation time of mixing flows}\label{s:tdiss-bound}

In this section we prove Proposition~\ref{prop:dissipation at mixing time} and Corollary~\ref{cor:exponential mixing dissipation enhancement}. We start with an estimate on the closeness of the solution to a drift-diffusion equation to the solution to the associated transport equation. We use throughout this section the solution operator notation given by Definition~\ref{def:solution operator}.

\begin{proposition}
Let $u \in L^\infty([0,\infty); W^{1,\infty}(\T^d))$ be a divergence-free velocity field and let $\theta \in L^2(\T^d)$. Then denote $\theta^0_t \defeq \sol_{0,t}^0 \theta$ and $\theta^\kappa_t \defeq \sol_{0,t}^\kappa \theta.$ We have the estimate.
    \begin{equation}
    \label{eq:closeness estimate} 
    \|\theta^\kappa_t -\theta^0_t\|_{L^2} \leq e^3\sqrt{\kappa} \Big( \|\nabla \theta\|_{L^2} + \sqrt{\|\nabla u\|_{L^\infty_{t,x}}+1}\int_0^t \|\nabla \theta^\kappa_s\|_{L^2}\,ds \Big).
        \end{equation}
\end{proposition}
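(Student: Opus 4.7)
The plan is a two-step argument: first a short-time comparison between $\sol^{u,\kappa}$ and $\sol^{u,0}$ on intervals of length $\tau \defeq 1/(\norm{\nabla u}_{L^\infty_{t,x}}+1)$, then a telescoping over $[0,t]$ that exploits the $L^2$-isometry of the pure transport operator $\sol^{u,0}$. The key point is that a naive energy estimate on $\theta^\kappa-\theta^0$ picks up a spurious $e^{\norm{\nabla u}_{L^\infty}t}$ factor, but telescoping through the measure-preserving flow of $u$ avoids this and recovers only the $\sqrt{\kappa}\,\sqrt{\norm{\nabla u}+1}$ behavior appearing in the stated bound.

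For the short-time step, I would fix $g\in H^1$ and $0\leq s\leq r\leq s+\tau$, and set $w_r \defeq \sol^{u,\kappa}_{s,r}g - \sol^{u,0}_{s,r}g$. Then $w_s=0$ and $\partial_r w + u\cdot\nabla w - \kappa\Delta w = \kappa\Delta \sol^{u,0}_{s,r}g$. Testing against $w$, using $\dv u=0$, and applying Young's inequality yields $\partial_r\norm{w_r}_{L^2}^2 + \kappa\norm{\nabla w_r}_{L^2}^2 \leq \kappa\norm{\nabla \sol^{u,0}_{s,r}g}_{L^2}^2$. Dropping the dissipation term, integrating in $r$, and inserting the transport gradient bound $\norm{\nabla \sol^{u,0}_{s,\sigma}g}_{L^2}\leq e\norm{\nabla g}_{L^2}$ valid on $[s,s+\tau]$ (itself a gradient energy estimate for the transport equation) gives the key short-time comparison $\norm{\sol^{u,\kappa}_{s,r}g - \sol^{u,0}_{s,r}g}_{L^2} \leq e\sqrt{\kappa(r-s)}\,\norm{\nabla g}_{L^2}$.

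For the telescoping, I would let $n\defeq \floor{t/\tau}$ and define $g_k \defeq \sol^{u,0}_{k\tau,t}\theta^\kappa_{k\tau}$ for $0\leq k\leq n$, together with $g_{n+1}\defeq \theta^\kappa_t$; then $g_0=\theta^0_t$. For $k<n$ the consecutive differences factor as $g_k - g_{k+1} = \sol^{u,0}_{(k+1)\tau,t}\bigl(\sol^{u,0}_{k\tau,(k+1)\tau} - \sol^{u,\kappa}_{k\tau,(k+1)\tau}\bigr)\theta^\kappa_{k\tau}$, with the analogous expression on $[n\tau,t]$ for $k=n$. The decisive observation is that the outer factor $\sol^{u,0}_{(k+1)\tau,t}$ is an $L^2$-isometry since the flow of the divergence-free $u$ is volume-preserving, so the short-time estimate yields $\norm{g_k-g_{k+1}}_{L^2}\leq e\sqrt{\kappa\tau}\norm{\nabla \theta^\kappa_{k\tau}}_{L^2}$ with no exponential amplification from the number of steps. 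Summing via the triangle inequality, the $k=0$ term contributes $e\sqrt{\kappa\tau}\norm{\nabla\theta}_{L^2}\leq e\sqrt{\kappa}\norm{\nabla\theta}_{L^2}$, producing the first piece of the right-hand side. For $k\geq 1$ the same gradient energy estimate applied to $\theta^\kappa$ gives $\norm{\nabla\theta^\kappa_{k\tau}}_{L^2}\leq e\norm{\nabla\theta^\kappa_\sigma}_{L^2}$ for $\sigma\in[(k-1)\tau,k\tau]$, so $\tau\norm{\nabla\theta^\kappa_{k\tau}}_{L^2}\leq e\int_{(k-1)\tau}^{k\tau}\norm{\nabla\theta^\kappa_\sigma}_{L^2}\,d\sigma$; summing and using $\sqrt{1/\tau}=\sqrt{\norm{\nabla u}_{L^\infty_{t,x}}+1}$ converts the tail into the claimed integral with combined prefactor below $e^3$.

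The main obstacle is conceptual rather than technical: any approach trying to Gronwall directly through the equation for $\theta^\kappa-\theta^0$ loses an $e^{\norm{\nabla u}_{L^\infty}t}$ factor, and a duality approach with $L^2$ test functions would require $H^{-1}$ smallness rather than $L^2$ smallness on the left. Isolating the telescoping through the $L^2$-isometric transport semigroup is what produces the clean $\sqrt{\kappa}$ scaling with the integrated gradient $\int_0^t\norm{\nabla\theta^\kappa_s}_{L^2}\,ds$; everything else is bookkeeping.
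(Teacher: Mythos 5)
Your proposal is correct and is essentially the paper's own argument: a short-time $L^2$ comparison of $\sol^{u,\kappa}$ and $\sol^{u,0}$ on subintervals of length $\sim(\|\nabla u\|_{L^\infty_{t,x}}+1)^{-1}$ via an energy estimate against the forcing $\kappa\Delta\theta^0$, telescoped through the $L^2$-isometric transport operator, with the Riemann sum of $\|\nabla\theta^\kappa_{k\tau}\|_{L^2}$ converted to $\int_0^t\|\nabla\theta^\kappa_s\|_{L^2}\,ds$ by the Gr\"onwall gradient bound on each subinterval. The only differences are cosmetic (fixed step length $\tau$ with a partial last interval versus the paper's equal $n$-fold subdivision with $n=\lceil t(\|\nabla u\|_{L^\infty_{t,x}}+1)\rceil$ and a separate treatment of small $t$), and your constant bookkeeping lands under the stated $e^3$.
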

\begin{proof}
    Let $\phi^\kappa_t \defeq \theta^\kappa_t - \theta^0_t$ so that
\[\partial_t \phi^\kappa - \kappa \Delta \phi^\kappa + u \cdot \nabla \phi^\kappa = \kappa \Delta \theta^0.\]
Then
\begin{align}
  \partial_t \|\phi^\kappa_t\|_{L^2}^2
    &\leq - 2\kappa \|\nabla \phi^\kappa_t\|_{L^2}^2 + 2\kappa \|\nabla \phi^\kappa_t\|_{L^2} \|\nabla \theta^0_t\|_{L^2}
  \\
    &\leq \kappa \|\nabla \theta^0_t\|_{L^2}^2
    \leq \kappa e^{2 t\|\nabla u\|_{L^\infty_{t,x}}} \|\nabla \theta\|_{L^2}^2
    \,.
\end{align}
Thus we have the bound
\[\|\sol_{0,t}^\kappa - \sol^0_{0,t}\|_{H^1 \to L^2} \leq \sqrt{\kappa t} e^{t \|\nabla u\|_{L^\infty_{t,x}}}.\]
Then for any $n \in \N,$
\begin{align*}
    \|\theta^\kappa_t -\theta^0_t\|_{L^2} &= \|(\sol^\kappa_{0,t} - \sol^0_{0,t}) \theta\|_{L^2}
    \\&=  \|\big((\sol^\kappa_{\frac{n-1}{n}t,t}  - \sol^0_{\frac{n-1}{n}t ,t})\sol^\kappa_{0,\frac{n-1}{n} t} + \sol^0_{\frac{n-1}{n}t ,t}(\sol^\kappa_{0,\frac{n-1}{n} t} - \sol^0_{0,\frac{n-1}{n}t})\big) \theta\|
    \\&\leq  \sqrt{\kappa} \sqrt{\frac{n}{t}}  \exp\Big(\frac{t\|\nabla u\|_{L^\infty_{t,x}}}{n}\Big) \frac{t}{n} \|\nabla \theta^\kappa_{\frac{n-1}{n}t}\|_{L^2} + \|\theta^\kappa_{\frac{n-1}{n}t} - \theta^0_{\frac{n-1}{n}t}\|_{L^2}
    % \\&\leq \cdots
    \\&= \sqrt{\kappa} \sqrt{\frac{n}{t}} \exp\Big(\frac{t \|\nabla u\|_{L^\infty_{t,x}}}{n}\Big)  \frac{t}{n}\sum_{j=0}^{n-1} \|\nabla \theta^\kappa_{tj/n}\|_{L^2}. 
\end{align*}
Then we note that for $j \geq 1,0 \leq  s \leq t/n$, we have that
\[\|\nabla \theta^\kappa\|_{tj/n} \leq e^{s \|\nabla u\|_{L^\infty_{t,x}}} \|\nabla \theta^\kappa_{tj/n-s}\|_{L^2} \leq \exp\Big(\frac{t \|\nabla u\|_{L^\infty}}{n}\Big) \|\nabla \theta^\kappa_{tj/n-s}\|_{L^2},\]
and so
\[\frac{t}{n}\sum_{j=1}^{n-1} \|\nabla \theta^\kappa_{tj/n}\|_{L^2} \leq \exp\Big(\frac{t \|\nabla u\|_{L^\infty}}{n}\Big) \int_0^{t} \|\nabla \theta^\kappa_s\|_{L^2}\,ds.
\]
Combining the displays, we get that 
\[  \|\theta^\kappa_t -\theta^0_t\|_{L^2} \leq \sqrt{\kappa} \sqrt{\frac{n}{t}} \exp\Big(\frac{2t \|\nabla u\|_{L^\infty_{t,x}}}{n}\Big) \Big( \frac{t}{n} \|\nabla \theta\|_{L^2} + \int_0^t \|\nabla \theta^\kappa_s\|_{L^2}\,ds \Big).\]
We now choose
\[n \defeq \lceil t (\|\nabla u\|_{L^\infty_{t,x}} + 1)\rceil,\]
giving
\[  \|\theta^\kappa_t -\theta^0_t\|_{L^2} \leq e^2\sqrt{\kappa} \Big( \|\nabla \theta\|_{L^2} + \big(\sqrt{\|\nabla u\|_{L^\infty_{t,x}}+1} +t^{-1/2}\big)\int_0^t \|\nabla \theta^\kappa_s\|_{L^2}\,ds \Big).\]
Then if $t \leq (\|\nabla u\|_{L^\infty_{t,x}}+1)^{-1}$, then
\[t^{-1/2}\int_0^t \|\nabla \theta^\kappa_s\|_{L^2}\,ds \leq e t^{1/2} \|\nabla \theta\|_{L^2} \leq e \|\nabla \theta\|_{L^2},\]
otherwise $t \geq (\|\nabla u\|_{L^\infty_{t,x}}+1)^{-1}$ and so $t^{-1/2} \leq \sqrt{\|\nabla u\|_{L^\infty_{t,x}}+1}$. In either case, we can conclude.
\end{proof}

We now prove Proposition~\ref{prop:dissipation at mixing time}. We need to define the following orthogonal Fourier projectors. For $R \in [0,\infty)$, we define the orthogonal projection $\Pi_{\leq R}, \Pi_{>R} : \dot L^2(\T^d) \to \dot L^2(\T^d)$ so that if
\[f(x) = \sum_{k \in \Z^d} e^{i k \cdot x} \hat f(k) \quad \text{then} \quad \Pi_{\leq R} f(x) \defeq \sum_{k \in \Z^d, |k| \leq R} e^{ik \cdot x} \hat f(k),\]
and $\Pi_{>R} = 1 - \Pi_{\leq R}.$

The key idea of the proof is to case split according to whether the drift-diffusion solution $\theta^\kappa_t$ stays close in $L^2(\T^d)$ to some solution to the transport equation or if it separates from it. If it stays close, then the mixing of the transport equation ensures, at the relevant time scale, that most of the $L^2$ mass of the transport solution---and hence by the $L^2$ closeness, most of the mass of $\theta^\kappa_t$---is on high Fourier modes, thus inducing $L^2$-norm dissipation by the energy identity. On the other hand, if the transport solution separates from $\theta^\kappa_t$, then~\eqref{eq:closeness estimate} gives a lower bound on the $L^1_t H^1_x$ norm of $\theta^\kappa_t$. The energy identity gives that $L^2$ dissipation is however governed by the $L^2_t H^1_x$ norm of the $\theta^\kappa_t$, so we need to apply H\"older's inequality, giving an inverse time scale factor. It is this inverse time scale---coming from comparing the $L^2$ norm in time to the $L^1$ norm in time---that causes us to get the dissipation time of an exponentially mixing flow to be $\abs{\log \kappa}^2$ instead of $\abs{\log \kappa}$.

This proof technique is largely analogous to that of~\cite{CotiZelatiDelgadinoEA20} however uses a slightly more refined (or ``less interpolated'') estimate of the distance of a transport solution and an advection-diffusion solution in the form of~\eqref{eq:closeness estimate}. Additionally, taking a slightly different approach gives more information on shorter times, essentially showing some linear decay of the $L^2$ norm on the time interval $[0,\tdis].$

\begin{proof}[Proof of Proposition~\ref{prop:dissipation at mixing time}]
    Let $\theta_0 \in \dot L^2$ and without loss of generality suppose that $\|\theta_0\|_{L^2} =1$. Let $\theta^\kappa_t \defeq \sol^\kappa_{0,t} \theta_0$. To conclude, it suffices to show that
    \[\|\theta^\kappa_{\tau_\kappa}\|_{L^2}^2 \leq 1 - A_\kappa.\]
    We suppose for the sake of contradiction that $\|\theta^\kappa_{\tau_\kappa}\|_{L^2}^2 > 1 - A_\kappa \geq \frac{1}{2}$. Then by the energy identity, we have that
    \[1 - A_\kappa < \|\theta^\kappa_{\tau_\kappa}\|_{L^2}^2 = 1 - 2\kappa \int_0^{\tau_\kappa} \|\nabla \theta^\kappa_s\|_{L^2}^2\,ds.\]
    Thus
    \[2\kappa \int_0^{\tau_\kappa/3} \|\nabla \theta^\kappa_s\|_{L^2}^2\,ds \leq  2\kappa \int_0^{\tau_\kappa} \|\nabla \theta^\kappa_s\|_{L^2}^2\,ds < A_\kappa.\]
    As such, we can choose some $t_0 \in [0,\tau_\kappa/3]$ such that
    \begin{equation}
    \label{eq:small H1}
    \|\nabla \theta^\kappa_{t_0}\|_{L^2} \leq \sqrt{\frac{3A_\kappa}{2\kappa \tau_\kappa}}.\end{equation}
    Define $\theta^0_{t_0,t} \defeq \sol^0_{t_0,t} \theta^\kappa_{t_0}.$ We split into two cases.
    % \begin{enumerate}
    %     \item for all $t \in [2\tau_\kappa/3,\tau_\kappa],\; \|\theta^\kappa_t - \theta^0_{t_0,t}\|_{L^2} \leq \tfrac{1}{4}$, 
    %     \item for some $t_1 \in [2\tau_\kappa/3,\tau_\kappa],\; \|\theta^\kappa_{t_1} - \theta^0_{t_0,t_1}\|_{L^2} \geq \tfrac{1}{4}.$
    % \end{enumerate}

    \restartcases
    \case[{For all $t \in [2\tau_\kappa/3,\tau_\kappa],\; \|\theta^\kappa_t - \theta^0_{t_0,t}\|_{L^2} \leq \tfrac{1}{4}$}]
    %We assume that for all $t \in [2\tau_\kappa/3,\tau_\kappa],\; \|\theta^\kappa_t - \theta^0_{t_0,t}\|_{L^2} \leq \frac{1}{4}$.
    By the definition of mixing rate, for all $t \in [2\tau_\kappa/3, \tau_\kappa]$, we see
    \[\|\theta^0_{t_0,t}\|_{H^{-1}} \leq H(\tau_\kappa) \|\nabla \theta^\kappa_{t_0}\|_{L^2} \leq H(\tau_\kappa)\sqrt{\frac{3A_\kappa}{2\kappa \tau_\kappa}} \defeq B_\kappa^{-1}.\]
    Then
    \[\|\Pi_{> B_\kappa/4} \theta^0_{t_0,t}\|_{L^2}^2 = \|\theta^0_{t_0,t}\|_{L^2}^2 - \|\Pi_{\leq B_\kappa/4} \theta^0_{t_0,t}\|_{L^2}^2 \geq \|\theta^\kappa_{\tau_\kappa}\|_{L^2}^2 - \frac{B_\kappa^{2}}{16} \|\theta^0_{t_0,t}\|_{H^{-1}}^2 \geq \frac{1}{9}.\]
    Then, still for $t \in [2\tau_\kappa/3,\tau_\kappa],$
    \begin{align*}\|\nabla \theta^\kappa_t\|_{L^2} &\geq \frac{B_\kappa}{4} \|\Pi_{> B_\kappa /4} \theta^\kappa_t\|_{L^2} 
    \\&\geq \frac{B_\kappa }{4} \big(\|\Pi_{> B_\kappa /4} \theta^0_{t_0,t}\|_{L^2} - \|\Pi_{> B_\kappa /4} (\theta^\kappa_t - \theta^0_{t_0,t})\|_{L^2}\big) 
    \\&\geq \frac{B_\kappa }{4} \Big(\frac{1}{3} - \|\theta^\kappa - \theta^0_{t_0,t}\|_{L^2}\Big)\geq \frac{B_\kappa }{48}.
    \end{align*}
    Thus
    \[A_\kappa > 2 \kappa \int_0^{\tau_\kappa} \|\nabla \theta^\kappa_s\|_{L^2}^2\,ds \geq 2 \kappa \int_{2\tau_\kappa/3}^{\tau_\kappa} \|\nabla \theta^\kappa_s\|_{L^2}^2\,ds \geq \frac{2\kappa B_\kappa^{2} \tau_\kappa}{3 \cdot 48^2} = \Big(\frac{\kappa \tau_\kappa}{72 H(\tau_\kappa)}\Big)^2 A_\kappa^{-1},\]
    or using the definitions of $\tau_\kappa, A_\kappa,$
    \[ \frac{1}{72  \cdot 2^8 (\|\nabla u\|_{L^\infty_{t,x}}+1)\tau_\kappa} \leq \frac{\kappa \tau_\kappa}{72 H(\tau_\kappa)}<A_\kappa =  \frac{1}{2^{15}(\|\nabla u\|_{L^\infty_{t,x}} + 1) \tau_\kappa},\]
    which is a contradiction as $72 \cdot 2^8 < 2^{15}.$

\case[{For some $ t_1 \in [2\tau_\kappa/3,\tau_\kappa],$ such that $\|\theta^\kappa_{t_1} - \theta^0_{t_0,t_1}\|_{L^2} \geq \frac{1}{4}$}]
    By~\eqref{eq:closeness estimate} and~\eqref{eq:small H1}, we see
    \[
    \frac{1}{4} \leq \|\theta^\kappa_{t_1} - \theta^0_{t_0,t_1}\|_{L^2} \leq e^3\sqrt{\kappa} \Big( \sqrt{\frac{3A_\kappa}{2\kappa \tau_\kappa}} + \sqrt{\|\nabla u\|_{L^\infty_{t,x}}+1}\int_0^{\tau_\kappa} \|\nabla \theta^\kappa_s\|_{L^2}\,ds \Big).
    \]
    Then we note that by the definition of $A_\kappa$ and that $\tau_\kappa \geq 2,$
    \[e^3 \sqrt{\frac{3 A_\kappa}{2 \tau_\kappa}} \leq \frac{1}{4(\|\nabla u\|_{L^\infty_{t,x}} + 1)^{1/2} \tau_\kappa} \leq \frac{1}{8}. \]
    Combining the previous two equations, using the definition of $A_\kappa$, and applying H\"older's inequality, we have
    \begin{align*}\tau_\kappa A_\kappa = \frac{1}{2^{15} \big(\|\nabla u\|_{L^\infty_{t,x}}+1\big)} &\leq  2\kappa\Big(\int_0^{\tau_\kappa} \|\nabla \theta^\kappa_s\|_{L^2}\,ds \Big)^2 
   \\&\leq 2\tau_\kappa \kappa\int_0^{\tau_\kappa} \|\nabla \theta^\kappa_s\|_{L^2}^2\,ds <   \tau_\kappa A_\kappa,
    \end{align*}
    which is a contradiction. Thus we conclude in both the cases.
\end{proof}

We conclude by proving Corollary~\ref{cor:exponential mixing dissipation enhancement} as a consequence of Proposition~\ref{prop:dissipation at mixing time}.

\begin{proof}[Proof of Corollary~\ref{cor:exponential mixing dissipation enhancement}]
    Let $\theta_0 \in \dot L^2$ arbitrary, and let $\theta^\kappa_t$ denote the solution to~\eqref{e:ad} with $\theta^\kappa_0 = \theta_0.$ 
    Let
    \[ T_\kappa\defeq 2^{24}(\|\nabla u\|_{L^\infty_{t,x}} + 1)\bigg(1 +  2^{24} \Big(\frac{p}{\gamma}\Big)^4(\|\nabla u\|_{L^\infty_{t,x}} + 1) + \gamma^{-2}\big((\log D)^2 + \abs{\log \kappa}^2\big)  \bigg).\]
    Then for any $0 \leq r \leq T_\kappa$, we define as in Proposition~\ref{prop:dissipation at mixing time},
        \begin{align*}
    H^r(T) &\defeq \sup_{0 \leq s \leq T/3 \leq t \leq T} h(r+s,t)
    \\\tau^r_\kappa &\defeq \inf \big\{t \in [2,\infty) : t^{-2} H^r(t) \leq 2^{8}(\|\nabla u\|_{L^\infty_{t,x}}+1)\kappa\big\}, \\A^r_\kappa &\defeq \frac{1}{2^{15}(\|\nabla u\|_{L^\infty_{t,x}} + 1) \tau^r_\kappa}.
    \end{align*}
     so that
    \begin{equation}
    \label{eq:enhanced dissipation iterable}
    \|\theta_{r+\tau^r_\kappa}^\kappa\|_{L^2} \leq \sqrt{1 - A^r_\kappa} \|\theta^\kappa_r\|_{L^2}.\end{equation}
     Note that, since $r \leq T$, using the definition of $h(s,t)$, we can directly verify that
    \[\tau_\kappa^r \leq 2 + \frac{3}{\gamma} \big( \log D + p \log(4 T_\kappa)+ \abs{\log \kappa}\big) \defeq \sigma_{\kappa} \leq T_\kappa,\]
     and thus
    \[A^s_\kappa \geq \frac{1}{2^{15}(\|\nabla u\|_{L^\infty_{t,x}} + 1)  \sigma_{\kappa}}.\]
    Thus for any $n \in \N$ such that $n \sigma_{\kappa} \leq T$, we have by iterating~\eqref{eq:enhanced dissipation iterable} that
    \[\|\theta^\kappa_T\|_{L^2} \leq \Big(1 -  \frac{1}{2^{15}(\|\nabla u\|_{L^\infty_{t,x}} + 1)  \sigma_{\kappa}}\Big)^{n/2} \|\theta_0\|_{L^2}.\]
    Using that $(1- x)^{x^{-1}} \leq \frac{1}{2}$, we thus get that $\|\theta^\kappa_{T_\kappa}\|_{L^2} \leq \frac{1}{2} \|\theta_0\|_{L^2}$---allowing us to conclude---provided
    \[2^{17}(\|\nabla u\|_{L^\infty_{t,x}} + 1)  \sigma_{\kappa}^2 \leq T_\kappa.\]
    Using the definition of $\sigma_{\kappa}$, we see this is implied provided
    \begin{equation}
    \label{eq:enhanced dissipation suffices}
    1 + \gamma^{-2} \big( (\log D)^2 + \abs{\log \kappa}^2\big) \leq \frac{T_\kappa}{2^{23}(\|\nabla u\|_{L^\infty_{t,x}} + 1)} - \Big(\frac{p}{\gamma} \log(4 T_\kappa)\Big)^2.\end{equation}
    We first note that since 
    \[T_\kappa \geq 2^{48} p^4 \gamma^{-4}(\|\nabla u\|_{L^\infty_{t,x}} + 1)^2,\]
    we have that
    \[\frac{T_\kappa}{2^{23}(\|\nabla u\|_{L^\infty_{t,x}} + 1)} - \Big(\frac{p}{\gamma} \log(4 T_\kappa)\Big)^2 \geq \frac{T_\kappa}{2^{24}(\|\nabla u\|_{L^\infty_{t,x}} + 1)}.\]
    Then using that 
    \[T_\kappa \geq 2^{24}(\|\nabla u\|_{L^\infty_{t,x}} + 1)(1+ \gamma^{-2}((\log D)^2+ \abs{\log \kappa}^2)),\]
    we conclude~\eqref{eq:enhanced dissipation suffices} and thus the proof.
\end{proof}

\appendix

\bibliographystyle{halpha-abbrv}
\bibliography{gautam-refs1,gautam-refs2,preprints}
\end{document}